\documentclass[a4paper,11pt]{amsart}

\usepackage{amssymb} 
\usepackage{xcolor} 
\usepackage{fancybox}
\usepackage{dashbox}
\newboolean{showcomments}
\setboolean{showcomments}{true}
\ifthenelse{\boolean{showcomments}}
{ \newcommand{\mynote}[3]{
  \fbox{\bfseries\sffamily\scriptsize#1}
  {\small$\blacktriangleright$\textsf{\emph{\color{#3}{#2}}}$\blacktriangleleft$}}}
{ \newcommand{\mynote}[3]{}}
\definecolor{asparagus}{rgb}{0.53, 0.66, 0.42}

\usepackage{mathrsfs}


  \usepackage{float}

  \usepackage{pgffor}
  \usepackage{tikz}
  \usetikzlibrary{calc,3d,arrows,positioning,shapes.misc}

  \usepackage[english]{babel}
  \usepackage{amsthm}
  \usepackage{amsmath}
  \usepackage{amssymb}
  \usepackage[latin1]{inputenc}
  \usepackage{setspace}
  \usepackage{enumerate}
  \usepackage{stmaryrd}

  \usepackage[colorlinks=true,linkcolor=black,citecolor=black,filecolor=black,urlcolor=black,menucolor=black]{hyperref}

  \usepackage{array}
  \usepackage{graphicx}
  \usepackage[babel]{csquotes}
  \usepackage{geometry}
  \usepackage{bbm}
  \usepackage{stmaryrd}
  \usepackage[all]{xy}
  \usepackage{mathrsfs}

  \geometry{paper=a4paper,left=25mm,right=25mm,top=25mm,bottom=30mm}


  \theoremstyle{plain}
  \newtheorem{theorem}{Theorem}
  \newtheorem{proposition}{Proposition}

  \newtheorem{lemma}{Lemma}
  
  \theoremstyle{definition}
  \newtheorem{definition}{Definition}

  \newtheorem{rem}{Remark}

  \newcommand{\step}[1]{\par\medskip\par\noindent\textit{#1}} 
	\allowdisplaybreaks[3]

  \def \H{\mathcal{H}}
  \def \L{\mathcal{L}}
  \def \N{\mathbb{N}}
  \def \R{\mathbb{R}}
  
	\def \S{\mathbb{S}}
	\def \Tstrong{T_{\mathrm{strong}}}
		\def \Tcal{T_{\mathrm{cal}}}

  \newcommand {\supp} {\mathop \textup{supp}}

 \newcommand {\dist} {\mathop \textup{dist}}
 
  \newcommand {\p} {\partial}
  \newcommand {\eps} {\varepsilon}

  \title[De~Giorgi type varifold solutions for mean curvature flow]
	{A new varifold solution concept for mean curvature flow: Convergence of the Allen--Cahn equation and weak-strong uniqueness}
\author{Sebastian Hensel}
\address{Institute of Science and Technology Austria (IST Austria), Am~Campus~1, 
3400 Klosterneuburg, Austria}
\email{sebastian.hensel@ist.ac.at}
\curraddr{Hausdorff Center for Mathematics, Universit{\"a}t Bonn, Endenicher Allee 62, 53115 Bonn, Germany
(\texttt{sebastian.hensel@hcm.uni-bonn.de})}

\author{Tim Laux}
\address{Hausdorff Center for Mathematics, Universit{\"a}t Bonn, Endenicher Allee 62, 53115 Bonn, Germany}
\email{tim.laux@hcm.uni-bonn.de}
    \date{\today}

  \begin{document}

    \begin{abstract}
	We propose a new weak solution concept for (two-phase) mean curvature flow which enjoys 
	both (unconditional) existence and (weak-strong) uniqueness properties.
    These solutions are evolving varifolds, just as in Brakke's formulation, 
		but are coupled to the phase volumes by a simple transport equation. 
    First, we show that, in the exact same setup as in Ilmanen's 
		proof [J.~Differential~Geom.~38, 417--461, (1993)], any limit point of solutions 
		to the Allen--Cahn equation is a varifold solution in our sense.
    Second, we prove that any calibrated flow in the sense of 
		Fischer et al.~[arXiv:2003.05478]---and hence any classical solution 
		to mean curvature flow---is unique in the class of our new varifold solutions. This is in sharp contrast to the case of Brakke flows, which a priori may disappear 
		at any given time and are therefore fatally non-unique. Finally, we propose
		an extension of the solution concept to the multi-phase case
		which is at least guaranteed to satisfy a weak-strong uniqueness principle.

	\medskip
    
  \noindent \textbf{Keywords:} Mean curvature flow, gradient flows, varifolds, weak solutions, weak-strong uniqueness, calibrated geometry, gradient-flow calibrations.

  \medskip

\noindent \textbf{Mathematical Subject Classification}: 53E10 (primary), 49Q20, 35K57, 35Q49, 28A75.
  \end{abstract}
\maketitle


\section{Introduction}

	Weak solution concepts for mean curvature flow have been investigated since the seminal 
	work of Brakke~\cite{brakke}. Remarkably, Brakke constructed his weak solution more than 
	a decade before Gage and Hamilton published their treatise of 
	closed convex curves~\cite{gagehamilton}. His solution concept is based on the theory 
	of varifolds, a measure-theoretic generalization of embedded surfaces also used in this work. 
	It has been applied in a broad context as it enjoys excellent compactness properties, and---thanks to Huisken's monotonicity formula~\cite{huisken}---a partial regularity theory. 
	However, it is well-known that a priori Brakke's formulation lacks any uniqueness or continuity in time: at any given time the solution (or parts of it) might instantly disappear.
	More recently, Kim and Tonegawa~\cite{KimTonegawa} showed that a variant of Brakke's original construction not only yields a Brakke flow, but that in addition the enclosed volume changes continuously, so that at least the sudden disappearance of the solution can be ruled out for the particular Brakke flow constructed there.	
	
	In the present work, we introduce a new notion of weak solutions to mean curvature flow, which is also based on varifolds, enjoys basically the same compactness and existence properties, but in addition does not allow the un-physical  time-discontinuities and non-uniqueness of Brakke's formulation.
	In some sense, our weak formulation lives between the concepts 
	of Brakke's solution and the distributional (or BV) solution 
	introduced by Luckhaus and Sturzenhecker~\cite{LucStu}. The latter also enjoys a uniqueness theory as our new concept
	(see~\cite{Fischer-Hensel-Laux-Simon}), but all existence proofs 
	are either conditional (see e.g.~\cite{LucStu, Laux-Otto, Laux-Simon}), or rest on additional 
	geometric properties \cite{GuidoTim}.
	
	There is yet another weak solution concept, the viscosity (or levelset) solution~\cite{evansspruck-I, chengigagoto},
	which satisfies both existence and uniqueness (if one allows for ``fattening'' of levelsets) and is compatible 
	with all other solution concepts mentioned above in the sense that a.e.~levelset is a solution in all three 
	senses~\cite{evansspruck, mylecturenotes}. The main drawback of this concept is that it relies on the comparison 
	principle, which only applies in the simple case of two phases. All other notions mentioned above (including our new notion) can 
	be formulated in the more general case of multiple phases (or surface clusters) as they solely rely on the 
	gradient-flow structure of (multi-phase) mean curvature flow. 
	
	\medskip
	
	Of all the weak solution concepts for mean curvature flow, the one which we propose in the present work is 
	the concept which exploits the gradient-flow structure in the strongest form as it mirrors the concepts known 
	for general gradient flows as introduced by De Giorgi~\cite{degiorgi, ambrosio_minmov}, and Sandier and Serfaty~\cite{sandier_gamma_convergence}.
	We refer the reader interested in the general framework to the book by Ambrosio, Gigli, and Savar\'{e}~\cite{AGS}, and Mielke's lecture notes~\cite{mielke}.
	Ever since the seminal work of Jordan, Kinderlehrer, and Otto~\cite{JKO}, the study of (Wasserstein) gradient-flows in the context of partial differential equations has received continuous attention. 
	Formally speaking, a gradient flow is the steepest descent in an energy landscape, which means that for a given energy functional $E$ on a (usually infinitely dimensional curved) space $\mathcal{M}$ equipped with a metric tensor $g$, one considers the equation
	\begin{align}\label{eq:GF PDE}
		\frac{du}{dt}  = - \nabla_g E(u),
	\end{align}
	where $\nabla_g E(u) \in T_u\mathcal{M}$ is the tangent vector dual to the differential $dE(u)$, i.e., $ g_u(\nabla_g E(u),v) = dE(u).v$ for all tangent vectors $v\in T_u\mathcal{M}$.
	If $\mathcal{M}$ is a Riemannian manifold, it is easy to construct solutions to this system of ordinary differential equations, but this formal picture becomes more subtle in the infinite dimensional case as the nonlinearity $-\nabla_g E(u)$ may not interact well with weak convergence.
	However, it is easily seen that, when $\mathcal{M}$, $E$, and $u$  are sufficiently smooth so that one can apply the chain rule to $E(u(t))$, that the validity of \eqref{eq:GF PDE} on a given time interval $(0,T)$ is equivalent to the 
	optimal energy-dissipation inequality
	\begin{align}
		E(u(T)) +\frac12 \int_0^T g_{u(t)}\Big(\frac{du}{dt},\frac{du}{dt}\Big) dt + \frac12 \int_0^T {g_{u(t)}}\Big( \nabla_g E(u(t)), \nabla_g E(u(t))\Big) dt 
		\leq E(u(0)).
	\end{align}
	The beauty of this observation is that one only needs to prove an \emph{inequality} for the nonlinear terms in the equation. 
	
	\medskip
	
	In the case of mean curvature flow, still speaking formally, the energy~$E$ is the area functional; 
	the space $\mathcal{M}$ is the space of all (embedded) $n$-dimensional surfaces in some 
	fixed manifold, say, $\R^d$ with $d>n$; the tangent space $T_\Sigma\mathcal{M}$ at a given configuration 
	$\Sigma\in \mathcal{M}$ can be identified with normal velocity fields~$V$ on $\Sigma$; and the metric 
	tensor is given by the standard $L^2$-structure $g_\Sigma (V,V) = \int_\Sigma |V|^2 d\omega^n$. 
	To turn this formal picture into rigorous analysis is particularly subtle in the case of mean curvature 
	flow as this metric is completely degenerate, a phenomenon which was first recorded by Michor and 
	Mumford \cite{michor-mumford_curves}.
	Geometrically speaking, it is the invariance with respect to reparametrizations which causes this degeneracy and allows to move surfaces at arbitrarily small cost when introducing infinitesimal wrinkles.
	From a PDE viewpoint, one of the key differences between mean curvature flow and more regular Wasserstein gradient flows is that in the latter case, the evolution equation is based on the continuity equation, while the evolution equation for mean curvature flow is based on the transport equation. 
	This degeneracy is the reason why Almgren, Taylor, and Wang \cite{ATW}, and Luckhaus and Sturzenhecker \cite{LucStu} used a proxy for the induced distance of surfaces in their minimizing movements approximation.
	
	Together with Felix Otto \cite{lauxotto_de_giorgi} and in the general multi-phase case with Jona Lelmi \cite{laux-lelmi}, the second author introduced a concept similar to the one here, but still in the framework of sets of finite perimeter, which intrinsically lacks compactness: the perimeter functional 
	is only lower semi-continuous, and in general not continuous. 
	This is why these two convergence proofs are only conditional in the sense that one has to \emph{assume} that the time-integrated energies converge to those of the limit. Under certain assumptions on the geometry in the two-phase case, e.g.~mean convexity $H\geq 0$, one can in fact verify this assumption \cite{GuidoTim}. Geometrically, this assumption rules out the piling up of several layers of surfaces and therefore guarantees the limit varifold to have unit density. Such an assumption is not necessary in our case here.
	
	\medskip
	
	In the present work, we prove existence and (weak-strong) uniqueness of our solution concept. 
	To the best of our knowledge, this is the first concept	 which satisfies these two properties and does not rely on the comparison principle.
	More precisely, we show that solutions to the Allen--Cahn equation, up to passing to subsequences, converge to solutions of our new concept. This part is inspired by Ilmanen's fundamental work \cite{ilmanen}, which shows that any such limit is a Brakke solution.
	However, in our case, a new major difficulty arises since one needs to construct a normal velocity field in the sense of the new notion introduced in the present paper. Let us also mention that our natural square integrability of this normal velocity field, which appears in a transport-type equation for the indicator function of the enclosed volume automatically implies the (H\"older-) continuity of this volume. This is to be compared with the already
	mentioned work of Kim and Tonegawa~\cite{KimTonegawa}, which requires a substantial effort to
	provide the construction of a Brakke flow supported on a network of evolving interfaces
	associated with a continuously evolving family of phase volumes.
	
	The relation between the Allen--Cahn equation and mean curvature flow has been an ongoing investigation for several decades. We will only mention a few key results and refer to the introduction of \cite{Laux-Simon} for more details. 
	Bronsard and Kohn~\cite{Bronsard1991} pointed out the gradient-flow structure of the Allen--Cahn equation, and investigated the radially symmetric case. 
	Evans, Soner, and Souganidis~\cite{Evans1992} proved the convergence to the viscosity solution.
	Ilmanen's convergence proof to a Brakke flow~\cite{ilmanen}, which was fundamental to many following works on the Allen--Cahn equation (including the construction in the present work), has been generalized to several different settings, for example to the case of boundary contact \cite{mizunotonegawa, mizunotonegawa_erratum} or when the underlying space is a manifold with lower bounds on the Ricci curvature, see \cite{pisante1,pisante2}. 
	Furthermore, Tonegawa \cite{tonegawa_integrality} showed that the limiting varifold in Ilmanen's proof has integer multiplicity.
	Together with Fischer and Simon, one of the authors has recently derived the optimal convergence rate for the Allen--Cahn equation to two-phase mean curvature flow solely relying on the gradient-flow structure. 
	At the moment, the only convergence proof in the vector-valued case is the conditional convergence to a BV solution by Simon and one of the authors~\cite{Laux-Simon}.
	Recent advances in the static case include Del Pino, Kowalczyk, and Wei \cite{delpino} who derived counter-examples to a conjecture of De Giorgi by constructing entire stationary solutions to the Allen--Cahn equation satisfying a monotonicity condition. 
	Chodosh and Mantoulidis~\cite{chodosh} have settled a question on the unit density (and on the lower bound of the index) of limits of stationary solutions to the Allen--Cahn equation on $3$-dimensional manifolds with generic metrics. 
		
	\medskip
	
	In the second part of the paper, we show that our varifold solution does not allow for un-physical non-uniqueness before the onset of singularities. More precisely, we establish that as long as a classical solution to mean curvature flow (in the sense of a smooth family of parametrized surfaces moving along their mean curvature) exists, all varifold solutions in the sense of the present work coincide with this classical flow. Our proof of this second theorem generalizes our recent result with Fischer and Simon~\cite{Fischer-Hensel-Laux-Simon} to the case of such varifold solutions for (two-phase) mean curvature flow, and is
	based on a relative entropy technique. 
	
	The basic idea of this approach is to construct a quantity
	which on one side captures the distance between a weak and a classical solution in a sufficiently
	strong sense, and on the other side allows for an estimate on its time evolution. 
	The latter requirement
	makes this task non-trivial due to the typically limited regularity of weak solutions. However,
	many mathematical models in continuum mechanics are equipped with a natural (non-linear) functional
	of the solution: the associated energy. Moreover, control over the time evolution of the energy
	is often provided by means of an energy dissipation principle. In such a setting, the general
	recipe for the construction of a relative entropy is to start with the energy of the weak
	solution and to subtract from it a suitable non-linear functional of the strong solution
	which only depends in a linear way on the weak solution. This particular structure indeed gives hope to
	compute its time evolution as one (at least in principle) only relies on the 
	sharp energy dissipation inequality, the weak formulation of the problem, and finally the higher
	regularity of a classical solution. 
	
	The classical setup in which this approach has been applied concerns
	continuum mechanical models equipped with a strictly convex and dissipated energy functional.
	In fact, in this context the method of relative entropies dates back to the works of Dafermos~\cite{Dafermos1979}
	and DiPerna~\cite{DiPerna1979} on conservation laws in the late 70's, and has, for instance,
	also been used in the context of mathematical fluid mechanics (see, e.g., 
	\cite{Serrin2,FeireislJinNovotny,FeireislNovotny,Wiedemann2018}). In these models, one
	subtracts from the energy functional its first-order Taylor expansion with base point
	located at the classical solution. By strict convexity of the energy, one indeed obtains
	in this way a coercive error functional.

	The relative entropy approach was recently extended to
	cover basic models from curvature-driven interface evolution.
	For instance, Fischer and the first author established in~\cite{Fischer2020b}
	a weak-strong uniqueness principle for Navier--Stokes two-phase flow with surface tension
	based on a suitably constructed relative entropy functional. Their ideas for the treatment
	of the interfacial energy contribution can (at least in principle) be generalized to multi-phase
	evolution problems; see our recent work with Fischer and Simon~\cite{Fischer-Hensel-Laux-Simon}
	concerning multi-phase mean curvature flow in~$\R^2$
	or our recent result~\cite{Hensel2021} for the mean curvature flow of double bubbles in~$\R^3$.
	Let us mention that even before these recent developments, Jerrard and Smets~\cite{Jerrard2015}
	already employed the analogue of our relative entropy technique for a problem in codimension~$2$,
	namely the evolution of curves in~$\R^3$ by their binormal curvature.
	
	\medskip
	
	Finally, let us mention that there are other interface evolution problems for which varifold solutions have been considered. Abels proved the existence of varifold solutions to the two-phase Navier--Stokes equation with 
	surface tension \cite{abels}. More recently, Chambolle and the second author \cite{chambollelaux} showed that varifold solutions, which also satisfy the PDE in a distributional sense, exist for the Hele--Shaw or one-phase Mullins--Sekerka equation. Let us stress that in both of these works, the velocity appears in a continuity equation, not a transport equation.
	

	In spirit, our solution concept is related to Serfaty's solution~\cite{serfaty} (which is based on a definition of velocity due to Mugnai and R\"oger~\cite{mugnai-roeger}), Ilmanen's enhanced Brakke flow~\cite{ilmanen-elliptic-regularization}, and the weak solution defined by Bellettini and Mugnai~\cite{bellettini-mugnai}. 
	However, to the best of our knowledge, for these three solution concepts, a weak-strong uniqueness principle is not known.
	

\section{Main results}
	A key ingredient of the present work is our novel weak solution
	concept for two-phase mean curvature flow.
	Roughly speaking, it defines a solution only in terms of the optimal energy-dissipation relation
	\begin{align}\label{eq:de giorgi inequality formal}
		\frac{d}{dt}\H^{d-1}(\partial A(t)) \leq -\frac12 \int_{\partial A(t)} V^2 \,d\H^{d-1} -\frac12 \int_{\partial A(t)} |\mathbf{H}|^2 \,d\H^{d-1}.
	\end{align}
	The key is that both the normal velocity $V$ as well as the mean curvature vector $\mathbf{H}$ can be defined in a very low regularity setting. 
	This is well-known for the latter via an integration by parts along the closed surface $\partial A(t)$.	
	But for the former, to the best of our knowledge, we give the first definition in this general setting. Let us briefly explain the structure of this definition of $V$. It is well-known that for a smoothly evolving open set $A(t)$, the volume changes according to
	\begin{align*}
		\frac{d}{dt} \mathcal{L}^d(A(t)) = -\int_{\partial A(t)} V \,d\H^{d-1}.
	\end{align*}
	(Here and throughout we use the sign convention $V>0$ for shrinking $A(t)$.) 
	This identity can be localized with a smooth test function, and the resulting PDE  in terms of the characteristic function $\chi(x,t)=\chi_{A(t)}(x)$ reads
	\begin{align*}
		\partial_t \chi + V |\nabla \chi| =0.
	\end{align*}
	This simply means that $\chi$ is transported by the vector field $V n$, where $n=\frac{\nabla \chi}{|\nabla \chi|}$.
	Clearly, the term $|\nabla \chi|$ is not stable under weak convergence, which is why we need to relax this definition.
	When approximating with such smooth sets, we need to take the limit in the sense of varifolds, not currents, which yields
	\begin{align*}
			\partial_t \chi + V \omega =0,
	\end{align*}
	where $\omega$ is the mass measure of the limit varifold $\mu$, and $V$ now is measurable with respect to $\omega$ (instead of $|\nabla \chi|$).
	This is exactly the ``transport'' equation we will use in our definition.
	
	Finally, let us state (the formal version of) Brakke's inequality to compare it to our new concept: For any test function $\phi \in C_c^1(\R^d\times[0,\infty))$ with $\phi\geq0$ Brakke requires
	\begin{align*}
		\frac{d}{dt} \int_{\partial A(t)} \phi \, d\mathcal{H}^{d-1} \leq \int_{\partial A(t)} 
		\left(-\phi |\mathbf{H}|^2 + \mathbf{H}\cdot \nabla \phi + \partial_t \phi \right) \,d\mathcal{H}^{d-1}.
	\end{align*}
	Clearly, Brakke's formulation seems to contain much more information as it requires a whole family of inequalities. 
	In other words, Brakke asks for localized versions of the optimal energy dissipation relation, both in space and time. 
	However, Brakke's formulation in and of itself cannot exclude the sudden loss of (parts of) the solution. Our solution concept prevents this from happening as the inequality \eqref{eq:de giorgi inequality formal} itself provides a certain regularity in time: the phase volume only varies H\"older-continuously in time, see Lemma \ref{lemma:discussion De Giorgi solution}. 
	Even more, we will show in Theorem \ref{theo:weakStrongUniqueness} that, for smooth initial conditions, our weak solution is unique and hence agrees with the classical solution as long as the latter exists, i.e., until the first topological change.

\begin{definition}[De~Giorgi type varifold solutions for two-phase mean curvature flow]
\label{def:twoPhaseDeGiorgiVarifoldSolutions}
Let $\mu = \mathcal{L}^1\otimes (\mu_t)_{t\in (0,\infty)}$ be
a family of oriented varifolds $\mu_t \in \mathcal{M}(\R^d{\times}\S^{d-1})$, $t\in (0,\infty)$,
such that the map $(0,\infty) \ni t \mapsto \int_{\R^d{\times}\S^{d-1}} \eta(\cdot,\cdot,t) \,d\mu_t$
is measurable for all $\eta \in L^1((0,\infty);C_0(\R^d{\times}\S^{d-1}))$. Consider 
also a family $A=(A(t))_{t \in (0,\infty)}$ of open subsets of $\R^d$
with finite perimeter in~$\R^d$ such that the associated indicator function~$\chi(\cdot,t):=\chi_{A(t)}$,
$t \in (0,\infty)$, satisfies $\chi \in L^\infty((0,\infty);\mathrm{BV}(\R^d;\{0,1\}))$.
Let $\sigma>0$ be a surface tension constant.

Given an initial oriented varifold~$\mu_0 \in \mathcal{M}(\R^d{\times}\S^{d-1})$
and an initial phase indicator function $\chi_0 \in \mathrm{BV}(\R^d;\{0,1\})$,
we call the pair~$(\mu,\chi)$ a \emph{De~Giorgi type varifold solution for two-phase
mean curvature flow with initial data~$(\mu_0,\chi_0)$} if the following holds.
\begin{subequations}
\begin{itemize}
\item (Existence of a normal speed) Writing $\mu_t = \omega_t \otimes (\lambda_{x,t})_{x\in\R^d}$
for the disintegration of $\mu_t$, $t \in (0,\infty)$, we require the existence 
of some $V \in L^2((0,\infty);L^2(\R^d,d\omega_t))$ encoding a normal speed
in the sense of
\begin{align}
\label{eq:evolPhase}
\sigma\int_{\R^d} \chi(\cdot,T)\zeta(\cdot,T) \,dx - 
\sigma\int_{\R^d} \chi_0\zeta(\cdot,0) \,dx 
= \sigma\int_{0}^{T} \int_{\R^d} \chi \partial_t\zeta \,dx dt
- \int_{0}^{T} \int_{\R^d} V \zeta \,d\omega_t dt
\end{align}
for almost every $T \in (0,\infty)$ and all $\zeta \in C^\infty_{c}(\R^d {\times} [0,\infty))$.
\item (Existence of a generalized mean curvature vector) We require the existence of some
$\mathbf{H} \in L^2((0,\infty);L^2(\R^d,d\omega_t;\R^d))$ encoding a generalized
mean curvature vector by 
\begin{align}
\label{eq:weakCurvature}
\int_{0}^{\infty} \int_{\R^d} \mathbf{H} \cdot B \,d\omega_t dt
= - \int_{0}^{\infty} \int_{\R^d {\times} \S^{d-1}}
(I_d {-} p \otimes p) : \nabla B \,d\mu_t dt
\end{align}
for all $B \in C^\infty_{c}(\R^d {\times} [0,\infty);\R^d)$.
\item (De~Giorgi type inequality for mean curvature flow) A sharp energy dissipation
principle \`a la De~Giorgi holds true in form of
\begin{align}
\label{eq:DeGiorgiInequality}
\int_{\R^d} 1 \,d\omega_{T}
+ \frac{1}{2} \int_{0}^{T} \int_{\R^d} |V|^2 \,d\omega_t dt
+ \frac{1}{2} \int_{0}^{T} \int_{\R^d} |\mathbf{H}|^2 \,d\omega_t dt
\leq \int_{\R^d} 1 \,d\omega_{0}
\end{align}
for almost every $T \in (0,\infty)$.
\item (Compatibility) For almost every $t \in (0,\infty)$
and all $\xi \in C^\infty_{c}(\R^d)$ it holds
\begin{align}
\label{eq:compatibility}
\int_{\R^d} \xi \cdot \,\sigma d\nabla\chi(\cdot,t) 
= \int_{\R^d {\times} \S^{d-1}} \xi \cdot p \,d\mu_t.
\end{align}
\end{itemize}
\end{subequations}
\end{definition}

We observe that due to the compatibility condition~\eqref{eq:compatibility}
it holds for almost every~$t \in (0,\infty)$ that $\,\sigma |\nabla\chi(\cdot,t)| \leq \omega_t$ 
in the sense of measures. In particular, for almost every~$t \in (0,\infty)$
the Radon--Nikod\'ym derivative $\rho(\cdot,t) := \smash{\frac{\sigma d|\nabla\chi(\cdot,t)|}{d\omega_t}}$
exists and satisfies 
\begin{align}
\label{eq:inverseMultiplicity}
\rho(x,t) &\in [0,1] 
&&\text{ for } \omega_t \text{ almost every } x \in \R^d,
\\ \label{eq:RadonNikodymProperty}
\sigma\int_{\p^*A(t)} f \,d\H^{d-1} &= \int_{\R^d{\times}\S^{d-1}}
f\rho(\cdot,t) \,d\omega_t
&&\text{ for all } f \in C^\infty_{c}(\R^d).
\end{align}

In the following first main result, we show that any limit of the Allen--Cahn equation
\begin{align}\label{eq:allen cahn}
\partial_t u_\eps = \Delta u_\eps - \frac1{\eps^2} W'(u_\eps)
\quad\text{in } \R^d \times(0,\infty)
\end{align}
is a weak solution in the above sense. 
Here $W\colon \R\to [0,\infty)$ is a double-well potential, which for simplicity, we will assume to be of the standard form $W(u)=\frac14 u^2 (u-1)^2$.
Recall that the Allen--Cahn equation is the $L^2$-gradient flow of the Cahn--Hilliard energy 
\begin{align}
E_\eps(u) := \int_{\R^d} \left( \frac\eps2|\nabla u|^2 + \frac1\eps W(u)\right) dx
\end{align}
on the slow time scale. This gradient-flow structure can be recognized at the identity
\begin{align*}
\frac{d}{dt} E_\eps(u_\eps(\,\cdot\,,t)) = - \int_{\R^d} \eps(\partial_tu_\eps(\,\cdot\,,t))^2 \,dx\leq 0,
\end{align*}
which follows from a straight-forward computation. 
In view of our approach here, cf.~\eqref{eq:de giorgi inequality formal}, we should rather measure the  energy-dissipation in the balanced way
\begin{align}\label{eq:EDI naive}
	\frac{d}{dt} E_\eps(u_\eps(\,\cdot\,,t)) = -\frac12 \int_{\R^d}  \eps(\partial_tu_\eps(\,\cdot\,,t))^2 \,dx - \frac12\int_{\R^d}  \frac1\eps \left( \eps \Delta u_\eps(\,\cdot\,,t) - \frac1\eps W'(u_\eps(\,\cdot\,,t)) \right)^2 \,dx,
\end{align}
where we have used equation \eqref{eq:allen cahn} once more.

\begin{theorem}[Convergence of the Allen--Cahn equation to De Giorgi type varifold solution]
\label{theo:Existence}
Let $u_\eps$ denote the solution to the Allen--Cahn equation with well-prepared initial conditions  $u_{\eps,0}$ in the sense of Ilmanen~\emph{\eqref{eq:ilmanen initial 1}--\eqref{eq:ilmanen initial 3}}.
Then there exist a measurable function $\chi \colon \R^d \times (0,\infty ) \to \{0,1\} $ and a family of oriented varifolds $\mu = \mathcal{L}^1\otimes (\mu_t)_{t\in(0,\infty)} \in \mathcal{M}(  \R^d \times (0,\infty) \times \S^{d-1})$ with associated mass measure $\omega \in \mathcal{M}(\R^d\times (0,\infty))$ such that 
 \begin{align}
&\lim_{\eps \downarrow0} u_\eps = \chi&& \text{strongly in } L^1_{loc}(\R^d\times(0,\infty)),\\
&\lim_{\eps\downarrow0} \left(\frac\eps2|\nabla u_\eps|^2 +\frac1\eps W(u_\eps)\right) \mathcal{L}^d\otimes \mathcal{L}^1 = \omega && \text{weakly-$\ast$ as Radon measures.}
\end{align}
Moreover, the pair $(\chi,\mu)$ is a solution of mean curvature flow in the sense of Definition \ref{def:twoPhaseDeGiorgiVarifoldSolutions} with initial conditions $(\chi_0,\mu_0)$ given by
\begin{align}
& \lim_{\eps\downarrow0} u_{\eps,0}=	\chi_0& & \text{strongly in }L^1_{loc}(\R^d),\\
\label{eq:initialEnergyDensityConvergence}
	&\lim_{\eps \downarrow 0} \left(\frac\eps2|\nabla u_{\eps,0}|^2 +\frac1\eps W(u_{\eps,0})\right) \mathcal{L}^d =\omega_0 &&  \text{weakly-$\ast$ as finite Radon measures}.
\end{align}

In addition, if the initial energy measures $\left(\frac\eps2 |\nabla u_{\eps,0}|^2 +\frac1\eps W(u_{\eps,0})\right) \mathcal{L}^d$ have uniformly bounded second moments (e.g.~if $u_{\eps,0}$ has compact support), the energy-dissipation inequality \eqref{eq:DeGiorgiInequality} holds also with $(0,T)$ replaced by $(s,t)$ for \emph{all} $t\in (0,\infty)$ and almost every $s<t$. In particular, the total mass is non-increasing, i.e.,  $\omega_t(\R^d) \leq \omega_s(\R^d)$ for all $0<s<t$.

Finally, for a.e.~$t\in(0,\infty)$, the corresponding unoriented varifold $\frac1\sigma \hat\mu_t$ (which identifies antipodal points $\pm p$ on $\S^{d-1}$) is integer rectifiable.
\end{theorem}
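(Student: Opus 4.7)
\textit{Compactness and limit objects.} My plan is to first invoke the standard Ilmanen-type compactness: from the balanced dissipation \eqref{eq:EDI naive} together with energy monotonicity, I extract uniform bounds on bounded time intervals for $E_\eps(u_\eps(\cdot,t))$, for $\sqrt{\eps}\partial_tu_\eps$ in $L^2$, and for $\eps^{-1/2}(\eps\Delta u_\eps-\eps^{-1}W'(u_\eps))$ in $L^2$. Using the Modica transform $\psi_\eps:=G(u_\eps)$ with $G'(u)=\sqrt{2W(u)}$ (which satisfies $|\nabla\psi_\eps|\le\frac{\eps}{2}|\nabla u_\eps|^2+\frac{1}{\eps}W(u_\eps)$) to produce a $t$-uniform BV bound, and combining with the $L^2$ bound on $\sqrt{\eps}\partial_tu_\eps$, an Aubin--Lions argument will give $u_\eps\to\chi$ in $L^1_{loc}(\R^d\times(0,\infty))$ along a subsequence, with $\chi\in\{0,1\}$ a.e.\ forced by the potential energy bound. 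Next, I take weak-$\ast$ subsequential limits of the energy density measures $\omega_\eps:=(\frac{\eps}{2}|\nabla u_\eps|^2+\frac{1}{\eps}W(u_\eps))\mathcal{L}^d\otimes\mathcal{L}^1$ to obtain $\omega=\omega_t\otimes\mathcal{L}^1$ (the factorization via disintegration of measures), and define oriented varifolds $\mu_{\eps,t}:=\omega_{\eps,t}\otimes\delta_{\nu_\eps(\cdot,t)}$ with $\nu_\eps:=\nabla u_\eps/|\nabla u_\eps|$ (extended arbitrarily on $\{|\nabla u_\eps|=0\}$), extracting $\mu=\mu_t\otimes\mathcal{L}^1$ whose mass measure is $\omega$.

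\textit{Generalized curvature and normal speed.} For the mean curvature I would use the classical Modica--Mortola identity: multiplying Allen--Cahn by $B\cdot\nabla u_\eps$ and integrating by parts yields
\begin{align*}
\int_0^\infty\!\!\!\int_{\R^d}\!\Big(\eps\Delta u_\eps-\tfrac{1}{\eps}W'(u_\eps)\Big)B\cdot\nabla u_\eps\,dx\,dt = -\int_0^\infty\!\!\!\int_{\R^d{\times}\S^{d-1}}\!(I_d{-}p\otimes p){:}\nabla B\,d\mu_{\eps,t}\,dt + \mathrm{err}_\eps,
\end{align*}
with $\mathrm{err}_\eps\to 0$ controlled by the Modica discrepancy $|\tfrac{\eps}{2}|\nabla u_\eps|^2-\tfrac{1}{\eps}W(u_\eps)|$. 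Cauchy--Schwarz plus the dissipation bound makes the LHS continuous on $L^2(d\omega;\R^d)$ in the limit, so Riesz representation delivers $\mathbf{H}\in L^2(d\omega;\R^d)$ satisfying \eqref{eq:weakCurvature}. For the normal speed, strong $L^1_{loc}$ convergence $u_\eps\to\chi$ identifies the LHS of \eqref{eq:evolPhase} with $\sigma\lim_\eps\int\partial_tu_\eps\,\zeta\,dx\,dt$; I then rewrite $\partial_tu_\eps=-V_\eps|\nabla u_\eps|$ with $V_\eps:=-\partial_tu_\eps/|\nabla u_\eps|$ where meaningful, so that the dissipation bound reads $\int V_\eps^2\,\eps|\nabla u_\eps|^2\,dx\,dt\le C$, i.e.\ $V_\eps$ is $L^2$-bounded against the measures $\eps|\nabla u_\eps|^2\mathcal{L}^d\otimes\mathcal{L}^1\rightharpoonup\omega$ (equipartition). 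A Hutchinson-type weak compactness principle for $L^2$ sections along varying measures then produces $V\in L^2(d\omega)$ with $V_\eps|\nabla u_\eps|\mathcal{L}^d\otimes\mathcal{L}^1\rightharpoonup V\,d\omega$ distributionally, which is \eqref{eq:evolPhase}. The compatibility \eqref{eq:compatibility} follows by passing to the limit in $\sigma\int\xi\cdot\nabla u_\eps\,dx$ and matching weights through equipartition.

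\textit{De~Giorgi inequality and remaining items.} The inequality \eqref{eq:DeGiorgiInequality} will follow from the balanced Allen--Cahn EDI \eqref{eq:EDI naive} by lower semicontinuity of both $L^2$-norms under the weak convergences above, combined with convergence of the initial energy (well-preparedness) and of $\omega_T(\R^d)$ at a.e.\ $T$. Under uniformly bounded second moments, an Ilmanen-type moment bound prevents mass from escaping to spatial infinity, so a spatial cutoff upgrades the EDI to arbitrary intervals $(s,t)$ with $0\le s<t$ (a.e.\ $s$, all $t$); the mass monotonicity $\omega_t(\R^d)\le\omega_s(\R^d)$ is then immediate by dropping the two nonnegative dissipation terms, with the null set in $s$ handled by $s'\uparrow s$. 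Integer rectifiability of $\hat\mu_t/\sigma$ at a.e.\ $t$ is Tonegawa's theorem~\cite{tonegawa_integrality} applied slice-wise. The hard part will be the construction of $V$: unlike Brakke's formulation (which needs no velocity) or BV solutions (where the velocity lives on $\sigma d|\nabla\chi|$), our $V$ must live on the full varifold mass $\omega$, which forces the varifold-type weak compactness argument above and makes the passage to the limit in the transport identity considerably more delicate than the passage to the limit in the curvature identity.
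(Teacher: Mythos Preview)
Your overall strategy is correct and mirrors the paper's structure: compactness, then curvature via the stress--energy tensor, then velocity, then assemble the De~Giorgi inequality by lower semicontinuity. The curvature argument is essentially the paper's. But your construction of the normal speed $V$ has a genuine gap.

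You define $V_\eps:=-\partial_t u_\eps/|\nabla u_\eps|$ and observe $\int V_\eps^2\,\eps|\nabla u_\eps|^2\,dx\,dt=\int\eps(\partial_t u_\eps)^2\,dx\,dt\le C$, so $V_\eps$ is $L^2$-bounded against the measures $\nu_\eps:=\eps|\nabla u_\eps|^2\,\mathcal{L}^d\otimes\mathcal{L}^1\rightharpoonup\omega$. Hutchinson-type compactness then yields $V\in L^2(d\omega)$ with $V_\eps\,\nu_\eps\rightharpoonup V\omega$. But $V_\eps\,\nu_\eps=-\eps\,\partial_t u_\eps\,|\nabla u_\eps|$, \emph{not} $V_\eps|\nabla u_\eps|=-\partial_t u_\eps$ as you wrote; these differ by the nontrivial factor $\eps|\nabla u_\eps|$. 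Moreover, the transport equation for $u_\eps$ gives $\partial_t\chi$ in the limit, not $\sigma\partial_t\chi$: the factor $\sigma$ only appears if one works with $\psi_\eps=G(u_\eps)\to\sigma\chi$. To close the argument you would still need to show $-\eps\,\partial_t u_\eps\,|\nabla u_\eps|\approx-\partial_t\psi_\eps$, i.e., that $\partial_t u_\eps\big(\sqrt{2W(u_\eps)}-\eps|\nabla u_\eps|\big)\to 0$ in $L^1_{loc}$, which requires the equipartition estimate in its strong $L^2$ form---a nontrivial extra step you do not mention.

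The paper avoids this mismatch by a different choice: $V_\eps:=-\eps\,\partial_t u_\eps/\sqrt{2W(u_\eps)}$ and the mass measure $\omega_\eps=|\nabla\psi_\eps|\,\mathcal{L}^d\otimes\mathcal{L}^1$. The crucial point is that Ilmanen's \emph{pointwise} sign inequality $\frac{\eps}{2}|\nabla u_\eps|^2\le\frac{1}{\eps}W(u_\eps)$ (preserved from the initial data by the maximum principle) gives
\[
\int V_\eps^2\,d\omega_\eps=\int\eps(\partial_t u_\eps)^2\,\frac{\sqrt{\eps}\,|\nabla u_\eps|}{\sqrt{2W(u_\eps)/\eps}}\,dx\,dt\le\int\eps(\partial_t u_\eps)^2\,dx\,dt,
\]
so $V_\eps\in L^2(d\omega_\eps)$ uniformly. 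With this choice $V_\eps\omega_\eps=-\eps\,\partial_t u_\eps\,|\nabla u_\eps|$, and the approximate transport relation $\partial_t\psi_\eps+V_\eps\omega_\eps\to 0$ follows from Cauchy--Schwarz and equipartition. The paper then proves $\sigma\partial_t\chi\ll\omega$ by the duality estimate $|\int\zeta\,\partial_t\psi|\le\big(\liminf_\eps\|V_\eps\|_{L^2(\omega_\eps)}\big)\|\zeta\|_{L^2(\omega)}$ and defines $V$ as the Radon--Nikod\'ym derivative; the sharp lower bound $\liminf_\eps\int V_\eps^2\,d\omega_\eps\ge\int V^2\,d\omega$ then comes for free from Riesz representation. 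Note that your $V_\eps$ with denominator $|\nabla u_\eps|$ would produce the \emph{opposite} inequality when estimating $\int V_\eps^2\,|\nabla\psi_\eps|$ (the Ilmanen factor is $\ge 1$, not $\le 1$), which is precisely why the paper's seemingly curious choice of denominator is essential.

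A minor additional point: the slicing $\omega=\mathcal{L}^1\otimes(\omega_t)_t$ is not a bare disintegration statement; one needs the monotonicity of $t\mapsto E_\eps(u_\eps(\cdot,t))$ and a Helly-type argument to show that the time-marginal of $\omega$ is absolutely continuous with respect to $\mathcal{L}^1$.
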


%
We turn to the question of weak-strong uniqueness for
our concept of De~Giorgi type varifold solutions for two-phase
mean curvature flow. Our strategy is based on lifting the
recently developed relative entropy approach for curvature driven interface evolution
problems to the present varifold setting by constructing two suitable
distance measures between a De~Giorgi type varifold solution
and a classical solution for two-phase mean curvature flow.
To fix notation, let $\Tstrong \in (0,\infty)$ be a finite time horizon,
and let $\mathscr{A}=(\mathscr{A}(t))_{t \in [0,\Tstrong]}$
be a smoothly evolving family of bounded domains in~$\R^d$
whose associated family of interfaces $\mathcal{I}=(\mathcal{I}(t))_{t \in [0,\Tstrong]}$
is assumed to evolve smoothly by their mean curvature. Moreover, 
let $(\mu,\chi)$ be a De~Giorgi type varifold solution
in the sense of Definition~\ref{def:twoPhaseDeGiorgiVarifoldSolutions}.

The distance measures
between~$\mathscr{A}$ and~$(\mu,\chi)$ then more precisely consist of a relative entropy 
\begin{align}
\label{eq:relativeEntropyIntro}
E[\mu,\chi|\mathscr{A}](t) := \int_{\R^d {\times} \S^{d-1}} 
1 - p \cdot \xi(x,t) \,d\mu_t(x,p) \geq 0, 
\quad t \in [0,\Tstrong],
\end{align}
as well as a bulk error
\begin{align}
\label{eq:bulkErrorIntro}
E_{\mathrm{bulk}}[\chi|\mathscr{A}](t) := \sigma\int_{\R^d} 
|\chi_{A(t)}(x) {-} \chi_{\mathscr{A}(t)}(x)| |\vartheta(x,t)| \,dx \geq 0, 
\quad t \in [0,\Tstrong].
\end{align}
The vector field~$\xi$ appearing in the definition of the relative
entropy~\eqref{eq:relativeEntropyIntro} represents a suitable
extension of the (inward pointing) normal vector field associated
with the classical solution~$\mathscr{A}$, whereas the weight~$|\vartheta|$
should be thought of as a smooth version of $(x,t) \mapsto \min\{1,\dist(x,\mathcal{I}(t))\}$.
For a rigorous construction of the required data~$(\xi,\vartheta)$,
we refer the reader to the beginning of Section~\ref{sec:weakStrongUniqueness}.
Let us only mention at this point that the length of the vector field~$\xi$ will be subject
to a condition of the form $\min\{1,\dist^2(x,\mathcal{I}(t))\} \leq 1 - |\xi(x,t)|$
for all $(x,t)\in \R^d{\times}[0,\Tstrong]$. In particular, the definition~\eqref{eq:relativeEntropyIntro}
thus provides an error control in terms of the tilt-excess as well as the distance to the
classical solution in form of
\begin{align*}
\int_{\R^d {\times} \S^{d-1}} \frac{1}{2}|p {-} \xi(\cdot,t)|^2 \,d\mu_t 
+ \int_{\R^d} \min\{1,\mathrm{dist}^2(\cdot,\mathcal{I}(t))\} \,d\omega_t 
&\leq 2E[\mu,\chi|\mathscr{A}](t).
\end{align*}
Finally, we remark that~\eqref{eq:bulkErrorIntro} represents nothing more than
a version of the well-known Luckhaus--Sturzenhecker~\cite{LucStu} 
or Almgren--Taylor--Wang~\cite{ATW} type distance.

With these ingredients in place, we 
may now turn to the formulation of the second main result of the present work.

\begin{theorem}[Weak-strong uniqueness and quantitative stability for two-phase De~Giorgi type varifold solutions]
\label{theo:weakStrongUniqueness}
Let $\Tstrong\in (0,\infty)$ be a finite time horizon, 
and let $\mathscr{A}=(\mathscr{A}_t)_{t\in [0,\Tstrong]}$ be a smoothly evolving 
family of bounded domains in~$\R^d$
whose associated family of interfaces $(\mathcal{I}(t))_{t \in [0,\Tstrong]}$
evolves smoothly by their mean curvature. Furthermore,
let $(\mu,\chi)$ be a De~Giorgi type varifold solution
for two-phase mean curvature flow with initial data~$(\mu_0,\chi_0)$ in the sense of
Definition~\ref{def:twoPhaseDeGiorgiVarifoldSolutions}.

Defining the relative entropy~$E[\mu,\chi|\mathscr{A}]$ 
and the bulk error~$E_{\mathrm{bulk}}[\chi|\mathscr{A}]$ 
by~\eqref{eq:relativeEntropyIntro} and~\eqref{eq:bulkErrorIntro}, respectively, 
we then have stability estimates for these quantities in form of
\begin{align}
\label{eq:stabilityRelEntropy}
E[\mu,\chi|\mathscr{A}](T) 
&\leq E[\mu_0,\chi_0|\mathscr{A}(0)]
+ C \int_{0}^{T} E[\mu,\chi|\mathscr{A}](t) \,dt
\\ \label{eq:stabilityBulkError}
E_{\mathrm{bulk}}[\chi|\mathscr{A}](T)
&\leq E_{\mathrm{bulk}}[\chi_0|\mathscr{A}(0)]
+ E[\mu_0,\chi_0|\mathscr{A}(0)]
+ \int_{0}^{T} E_{\mathrm{bulk}}[\chi|\mathscr{A}](t)
+ E[\mu,\chi|\mathscr{A}](t) \,dt
\end{align}
for some constant $C=C(\mathscr{A})>0$
and almost every $T \in [0,\Tstrong]$.

In particular, if $\chi_0=\chi_{\mathscr{A}(0)}$
and $\mu_0 = \sigma|\nabla\chi_0|\otimes(\smash{\delta_{\frac{\nabla\chi_0}{|\nabla\chi_0|}(x)}})_{x\in\R^d}$
hold true, then for almost every $t \in [0,\Tstrong]$
\begin{align}
\label{eq:weakStrong1}
\chi(\cdot,t) &= \chi_{\mathscr{A}(t)}
\text{ almost everywhere in } \R^d,
\\ \label{eq:weakStrong2}
\mu_t &= \sigma|\nabla\chi(\cdot,t)|\otimes(\delta_{\frac{\nabla\chi(\cdot,t)}{|\nabla\chi(\cdot,t)|}(x)})_{x\in\R^d}.
\end{align}
In words, the De~Giorgi type varifold solution~$(\mu,\chi)$ reduces to the smooth evolution~$\mathscr{A}$.
\end{theorem}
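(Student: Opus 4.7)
The plan is to lift the relative entropy method developed by Fischer--Hensel--Laux--Simon~\cite{Fischer-Hensel-Laux-Simon} to the varifold setting of Definition~\ref{def:twoPhaseDeGiorgiVarifoldSolutions}. The key starting observation is that the compatibility condition~\eqref{eq:compatibility} recasts the relative entropy in the convenient form
\[
E[\mu,\chi|\mathscr{A}](T)
= \omega_T(\R^d) - \sigma\!\int_{\R^d}\xi(\cdot,T)\cdot d\nabla\chi(\cdot,T)
= \omega_T(\R^d) + \sigma\!\int_{\R^d}\chi(\cdot,T)\,\div\xi(\cdot,T)\,dx,
\]
so that its time evolution is controlled by exactly the two structural ingredients built into Definition~\ref{def:twoPhaseDeGiorgiVarifoldSolutions}: the sharp De~Giorgi inequality~\eqref{eq:DeGiorgiInequality} controls $\omega_T - \omega_0$, while the transport equation~\eqref{eq:evolPhase} applied with test function (a compactly supported cutoff of) $\zeta = \div\xi$ controls the bulk integral. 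Combining these yields
\[
E[\mu,\chi|\mathscr{A}](T) - E[\mu_0,\chi_0|\mathscr{A}(0)]
\leq -\tfrac12\!\int_0^T\!\!\!\int (V^2{+}|\mathbf{H}|^2)\,d\omega_t\,dt
- \!\int_0^T\!\!\!\int V\,\div\xi\,d\omega_t\,dt
+ \sigma\!\int_0^T\!\!\!\int \chi\,\p_t\!\div\xi\,dx\,dt.
\]

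The main algebraic step exploits the two defining identities of a gradient-flow calibration on the classical interface, namely $V + \div\xi = 0$ and $\mathbf{H} + (\div\xi)\xi = 0$ on $\mathcal{I}(t)$. Guided by these, I would use the pointwise decomposition
\[
-\tfrac12 V^2 - V\div\xi - \tfrac12|\mathbf{H}|^2
= -\tfrac12(V{+}\div\xi)^2 - \tfrac12|\mathbf{H}{+}(\div\xi)\xi|^2
+ \tfrac12(\div\xi)^2(1{+}|\xi|^2) + (\div\xi)\,\mathbf{H}\cdot\xi,
\]
whose first two summands are manifestly non-positive and whose last two terms vanish on $\mathcal{I}(t)$. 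The non-square remainder is then handled by applying the weak curvature formula~\eqref{eq:weakCurvature} with the localised test field $B = (\div\xi)\xi$, which converts $\int(\div\xi)\mathbf{H}\cdot\xi\,d\omega_t$ into $-\int(\div\xi)^2\,d\omega_t$ plus terms that are either tangential derivatives of $\xi$ pointwise cancelling (where $p=\xi$) or quadratic in $p - \xi$. Together with the coercivity $1 - |\xi|^2 \gtrsim \min\{1,\dist^2(\cdot,\mathcal{I})\}$ built into the data~$\xi$, this produces the estimate $\leq C\,E[\mu,\chi|\mathscr{A}]$ for the entire non-square remainder. Finally, the $\p_t\!\div\xi$ contribution is treated via the calibration identity $\p_t\xi + (B_\xi\cdot\nabla)\xi + (\nabla B_\xi)^T\xi = O(\dist(\cdot,\mathcal{I}))$, with $B_\xi$ a smooth velocity extension of the classical MCF; its leading-order part cancels the matching contribution arising from a second application of~\eqref{eq:weakCurvature} with $B = B_\xi$, leaving only errors controlled by $E$. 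Gronwall then closes~\eqref{eq:stabilityRelEntropy}.

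For~\eqref{eq:stabilityBulkError} I would exploit that~$\vartheta$ is signed so that $E_{\mathrm{bulk}}[\chi|\mathscr{A}] = -\sigma\int(\chi - \chi_{\mathscr{A}})\vartheta\,dx$. Applying the transport equation~\eqref{eq:evolPhase} with test $\zeta = \vartheta$ and the smooth MCF identity $\p_t\chi_{\mathscr{A}} + B_\xi\cdot\nabla\chi_{\mathscr{A}} = 0$ produces the dangerous flux term $\int V\,\vartheta\,d\omega_t$; Cauchy--Schwarz combined with the structural bound $|\vartheta|^2 \lesssim 1 - |\xi|$ (an admissibility property enforced on the data pair $(\xi,\vartheta)$) converts this into a controlled fraction of $\int V^2\,d\omega_t$ plus $C\,E[\mu,\chi|\mathscr{A}]$. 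All remaining contributions are linear in $\chi - \chi_{\mathscr{A}}$ weighted by bounded smooth ingredients, hence absorbable into $C\,E_{\mathrm{bulk}}$. Gronwall on the coupled system $(E,E_{\mathrm{bulk}})$ produces~\eqref{eq:stabilityBulkError}. The uniqueness statements~\eqref{eq:weakStrong1}--\eqref{eq:weakStrong2} are then immediate: the assumed initial data force $E(0) = E_{\mathrm{bulk}}(0) = 0$, so both functionals vanish for almost every~$t$. From $E_{\mathrm{bulk}} = 0$ one reads off~\eqref{eq:weakStrong1}, while $E = 0$ combined with $|\xi| < 1$ off~$\mathcal{I}(t)$ forces $\supp\omega_t \subset \mathcal{I}(t)$ and $p = \xi$ $\mu_t$-almost everywhere; finally~\eqref{eq:compatibility} pins the Radon--Nikod\'ym density $\rho$ to~$1$, producing~\eqref{eq:weakStrong2}.

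The hardest step will be closing the non-square remainder in the entropy estimate. Because $\mu_t$ is an abstract oriented varifold---the direction $p$ is a priori decoupled from $\nabla\chi/|\nabla\chi|$ and $\omega_t$ may exceed $\sigma|\nabla\chi|$ (with multiplicity encoded by $\rho\in[0,1]$)---the cross terms produced by the weak curvature identity live against the full~$\mu_t$, whereas the transport equation only couples~$V$ to~$\omega_t$. One must therefore verify that the excess mass $\omega_t - \sigma|\nabla\chi|$ and the tilt defect $|p - \xi|$ conspire to produce a non-negative quadratic form after the square completion, and that all the lower-order geometric terms reduce to errors of the form $\int(1 - p\cdot\xi)(\cdot)\,d\mu_t$ or $\int\min\{1,\dist^2(\cdot,\mathcal{I})\}\,d\omega_t$, both controlled by~$E$. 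This is exactly where the tilt-excess identity $\int\tfrac12|p-\xi|^2\,d\mu_t \leq E[\mu,\chi|\mathscr{A}]$ and the coercivity of $1 - |\xi|$ must work in tandem to make the whole argument coercive.
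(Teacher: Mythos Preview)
Your overall architecture is exactly that of the paper: rewrite $E[\mu,\chi|\mathscr{A}]$ via the compatibility condition~\eqref{eq:compatibility}, feed in the De~Giorgi inequality~\eqref{eq:DeGiorgiInequality} and the transport equation~\eqref{eq:evolPhase}, complete squares, invoke the weak curvature identity~\eqref{eq:weakCurvature}, and estimate the $\partial_t\xi$ contribution through the calibration transport equations; the bulk error is then coupled in via~\eqref{eq:evolPhase} with test function~$\vartheta$. So far, so good.

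The gap is in your treatment of the non-square remainder. You assert that after testing~\eqref{eq:weakCurvature} with $B=(\nabla\cdot\xi)\xi$ the leftover terms are ``either tangential derivatives of~$\xi$ pointwise cancelling (where $p=\xi$) or quadratic in $p-\xi$''. This is not true. Expanding $(I_d-p\otimes p):\nabla\big((\nabla\cdot\xi)\xi\big)$ and comparing with $(\nabla\cdot\xi)^2$, one finds---even when $p=\nabla s_{\mathcal{I}}$ is exactly the extended unit normal---a residual term $(\nabla\cdot\xi)\,p\cdot\nabla\xi\,p$ containing the normal derivative of~$|\xi|$, which is only $O(\dist(\cdot,\mathcal{I}(t)))$, not $O(\dist^2)$ or $O(|p-\xi|^2)$. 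Integrated against~$\omega_t$ this yields $O(\sqrt{E})$, which is fatal for Gronwall. A second linear-in-$(p-\xi)$ contribution arises from $(I_d-p\otimes p)\xi\cdot\nabla(\nabla\cdot\xi)$ and has the same defect. Your proposed cure---``a second application of~\eqref{eq:weakCurvature} with $B=B_\xi$''---does not make sense here: you have already spent the term $-\tfrac12\int|\mathbf{H}|^2\,d\omega_t$ in your square $-\tfrac12|\mathbf{H}+(\nabla\cdot\xi)\xi|^2$, so there is nothing left to absorb a new $\int\mathbf{H}\cdot B_\xi\,d\omega_t$.

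The paper resolves this by a different order of operations. It processes the $\partial_t\xi$ contribution \emph{first}, splitting $n=(n-\xi)+\xi$ and using not only the $O(\dist)$ transport identity $\partial_t\xi+(B\cdot\nabla)\xi+(\nabla B)^{\mathsf T}\xi=O(\dist)$ but crucially also the improved version $\xi\cdot\big(\partial_t\xi+(B\cdot\nabla)\xi\big)=O(\dist^2)$; this upgrade is what kills the dangerous linear errors. After several product-rule manipulations this \emph{produces} the first-variation term $-\int(I_d-p\otimes p):\nabla B\,d\mu_t$, which is then converted by a \emph{single} application of~\eqref{eq:weakCurvature} with the velocity extension~$B$ into $\int\mathbf{H}\cdot B\,d\omega_t$. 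Only now are the squares in $\mathbf{H}$ completed. The argument finally closes because $B$ is purely normal, $(I_d-\nabla s_{\mathcal{I}}\otimes\nabla s_{\mathcal{I}})B=0$, so that the tangential part of~$\mathbf{H}$ decouples. Your remark that ``this is exactly where the tilt-excess identity and the coercivity of $1-|\xi|$ must work in tandem'' is right in spirit, but the missing ingredient is the $O(\dist^2)$ calibration identity and the specific order in which it is deployed.

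For the bulk error your plan is essentially correct, but the term you need to borrow from the entropy estimate is $\int|V+\nabla\cdot\xi|^2\,d\omega_t$ (which the paper keeps as a spare negative square), not a raw $\int V^2\,d\omega_t$; after writing $V\vartheta=(V+\nabla\cdot\xi)\vartheta-(\nabla\cdot\xi)\vartheta$ the argument closes as you indicate.
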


	In addition to our two main theorems (Theorems \ref{theo:Existence} and \ref{theo:weakStrongUniqueness} above), one can also derive further properties of our new solution concept from Definition \ref{def:twoPhaseDeGiorgiVarifoldSolutions}. 
	We collect these properties in the following lemma. Its proof is relegated to a separate Section~\ref{sec:proofLemma}.
	
	\begin{lemma}\label{lemma:discussion De Giorgi solution}
		\begin{enumerate}[(i)]
			\item 	(Classical solutions are weak solutions)
			\label{item:classical is weak}
			If $(\partial \mathscr{A}(t))_{t\in[0,T)} $ 
			is a classical solution to mean curvature flow for some $T\in (0,\infty]$, 
			then the pair consisting of $\chi(x,t):= \chi_{\mathscr{A}(t)}(x)$ 
			and $\mu_t:= |\nabla \chi(\cdot,t)| \otimes 
			\delta_{\frac{\nabla \chi(\cdot,t)}{|\nabla \chi(\cdot,t)|}} $ 
			is a De~Giorgi type varifold solution in the sense of 
			Definition~\ref{def:twoPhaseDeGiorgiVarifoldSolutions} (with obvious modifications 
			to Definition~\ref{def:twoPhaseDeGiorgiVarifoldSolutions} to restrict the required conditions
			to the possibly finite open time interval~$(0,T)$).
						
			\item (Regular weak solutions are classical solutions) 
			\label{item:weak is classical}
			If $(\chi,\mu)$ is a De~Giorgi type varifold solution 
			such that there exists $T\in (0,\infty]$ so that $\chi(x,t)=\chi_{\mathscr{A}(t)}(x)$
			and $\mu_t = |\nabla \chi(\cdot,t)| \otimes 
			\delta_{\frac{\nabla \chi(\cdot,t)}{|\nabla \chi(\cdot,t)|}}$ 
			for all $t\in (0,T)$ and some smoothly evolving family $(\mathscr{A}(t))_{t\in [0,T)}$, 
			then $(\partial \mathscr{A}(t))_{t\in [0,T)}$
			is a classical solution to mean curvature flow.
			
			\item (Regularity properties of weak solutions)
			\label{item:structure}
			Any De~Giorgi type varifold solution $(\chi,\mu)$ according to Definition~\ref{def:twoPhaseDeGiorgiVarifoldSolutions} has a certain regularity. First, for a.e.~$t\in (0,\infty)$, the corresponding unoriented varifold $\hat \mu_t$ is $(d-1)$-rectifiable. Second, the volumes change (H\"older-)continuously in time: 
			\begin{align}\label{eq:lemma cont vol}
				\sigma \mathcal{L}^d(A(t) \Delta A(s))
				\leq \sqrt{2}\omega_0(\R^d) \sqrt{t-s} \quad \text{for all } 0\leq s<t<\infty.
			\end{align} 
			
			\item (Sequential compactness of solution space)
			\label{item:compact sol space}
			Let $(\chi_k,\mu_k)$ be a sequence of De~Giorgi type varifold solutions such that the energies are non-increasing in time, i.e., for all $k$, the map $t\mapsto (\omega_k)_t(\R^d)$ is non-increasing. 
			Moreover, we assume that the initial energy $(\omega_k)_0(\R^d)$ 
			is uniformly bounded in $k\in \N$, and that the sequence of $(\omega_k)_0$
			is tight. Then there exists a subsequence $(k_\ell)_\ell$ and a De~Giorgi type varifold solution $(\chi,\mu)$ such that $\chi_{k_\ell}\to \chi $ in $L^1_{loc}$ and 
			$ \mu_{k_\ell}\stackrel{\ast}{\rightharpoonup}\mu$ as Radon measures.
		\end{enumerate}
	\end{lemma}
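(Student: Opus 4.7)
I treat the four assertions in order, with the technical weight concentrated on~(iv).

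For~(i), given a smoothly evolving family $(\partial\mathscr{A}(t))_{t\in[0,T)}$, I set $V = \mathbf{H}\cdot n$ (with $n$ the inner unit normal and $\mathbf{H}$ the classical mean curvature vector) and verify each item of Definition~\ref{def:twoPhaseDeGiorgiVarifoldSolutions}: \eqref{eq:evolPhase} is Reynolds' transport theorem, \eqref{eq:weakCurvature} is classical integration by parts on a closed hypersurface, \eqref{eq:compatibility} is immediate from the product form of~$\mu_t$, and~\eqref{eq:DeGiorgiInequality} is in fact saturated since $\frac{d}{dt}\mathcal{H}^{d-1}(\partial\mathscr{A}(t)) = -\int|\mathbf{H}|^2 \,d\mathcal{H}^{d-1}$ while $|Vn|=|\mathbf{H}|$. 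Conversely, for~(ii) the same smooth computation of $\frac{d}{dt}\omega_t(\R^d)$ together with~\eqref{eq:evolPhase} and~\eqref{eq:weakCurvature} identifies the weak $V$ and $\mathbf{H}$ with the classical normal velocity and mean curvature vector of~$\partial\mathscr{A}(t)$; plugging back into~\eqref{eq:DeGiorgiInequality} produces $\int_0^T \int \tfrac12|Vn-\mathbf{H}|^2 \,d\mathcal{H}^{d-1}\,dt \leq 0$, which forces $Vn=\mathbf{H}$ pointwise, exactly the classical mean curvature flow equation.

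For~(iii), rectifiability of $\hat\mu_t$ for a.e.~$t$ will follow from Allard's rectifiability theorem applied slice-wise: the $L^2$-integrability of $\mathbf{H}$ in~\eqref{eq:DeGiorgiInequality} guarantees for a.e.~$t$ a locally bounded first variation of~$\mu_t$, while the positive lower density required by Allard on $\mathrm{spt}\,\omega_t$ is supplied by the compatibility~\eqref{eq:compatibility} (on the reduced boundary $\partial^*A(t)$, where the density is automatically $\geq 1$) together with a blow-up argument handling any diffuse multiplicity off the reduced boundary. For the H\"older estimate~\eqref{eq:lemma cont vol}, I insert a smooth approximation of the test function $\zeta(x,\tau)=\mathrm{sign}(\chi(x,t){-}\chi(x,s))\,\mathbf{1}_{[s,t]}(\tau)$ into~\eqref{eq:evolPhase} to obtain $\sigma\mathcal{L}^d(A(t)\Delta A(s)) \leq \int_s^t\int|V|\,d\omega_\tau \,d\tau$, and close the estimate by two applications of Cauchy--Schwarz together with the bounds $\omega_\tau(\R^d) \leq \omega_0(\R^d)$ and $\int_0^\infty\int V^2 \,d\omega_\tau\,d\tau \leq 2\omega_0(\R^d)$ provided by~\eqref{eq:DeGiorgiInequality}.

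Part~(iv) is the main obstacle. The monotonicity assumption combined with the uniform initial energy bound yields $\sup_k\sup_t(\omega_k)_t(\R^d)<\infty$, so after extracting a subsequence one has $\mu_{k_\ell}\stackrel{*}{\rightharpoonup}\mu$ as Radon measures on $\R^d{\times}(0,\infty){\times}\S^{d-1}$, while $L^1_{\mathrm{loc}}$-compactness of $\chi_{k_\ell}$ follows from the uniform BV bound $\sigma|\nabla\chi_k(\cdot,t)|\leq(\omega_k)_t$ combined with the time-equicontinuity provided by the H\"older estimate from~(iii) applied at the approximating level. By Cauchy--Schwarz and~\eqref{eq:DeGiorgiInequality}, the space-time scalar and $\R^d$-valued measures $V_k\,d(\omega_k)_\tau\,d\tau$ and $\mathbf{H}_k\,d(\omega_k)_\tau\,d\tau$ have uniformly bounded total variation, hence admit weak-$*$ limits; a standard lower-semicontinuity theorem for convex integral functionals on measures (in the spirit of Reshetnyak, or Ambrosio--Fusco--Pallara) identifies these limits as $V\,d\omega\,dt$ and $\mathbf{H}\,d\omega\,dt$ with $V,\mathbf{H}\in L^2(d\omega\,dt)$, and at the same time produces the lower semicontinuity of the two dissipation terms in~\eqref{eq:DeGiorgiInequality}. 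Conditions~\eqref{eq:evolPhase},~\eqref{eq:weakCurvature}, and~\eqref{eq:compatibility} then pass to the limit by linearity. The genuinely delicate point, which I expect to be the hardest step, is the convergence of the total masses at specific times on the two sides of~\eqref{eq:DeGiorgiInequality}: this requires tightness of the time slices~$(\omega_k)_t$, which at $t=0$ is assumed and for positive~$t$ must be propagated, which I plan to accomplish by combining the uniform energy-dissipation bound with the H\"older displacement estimate from~(iii) to rule out escape of mass to infinity in finite time.
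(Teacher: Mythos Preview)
Your arguments for (i), (ii), and the H\"older estimate in (iii) match the paper's essentially line by line: the paper also reduces (i) to the classical energy identity $\frac{d}{dt}\mathcal{H}^{d-1}(\partial\mathscr{A}(t))=-\int V\,n\cdot\mathbf{H}\,d\mathcal{H}^{d-1}$, derives (ii) by subtracting this identity from~\eqref{eq:DeGiorgiInequality} and completing the square, and obtains~\eqref{eq:lemma cont vol} exactly by testing~\eqref{eq:evolPhase} and applying Cauchy--Schwarz twice.

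For the rectifiability part of (iii), your plan contains a gap. Allard's theorem gives rectifiability of the portion of $\hat\mu_t$ with positive $(d{-}1)$-density; the issue is showing that the density is positive $\omega_t$-a.e. You propose to extract this from the compatibility condition~\eqref{eq:compatibility}, but that only yields density $\geq\sigma$ on $\partial^*A(t)$ and says nothing about the part of $\omega_t$ supported away from the reduced boundary --- the vague ``blow-up argument handling any diffuse multiplicity'' does not supply the missing lower bound, because without further structure a blow-up of $\omega_t$ off $\partial^*A(t)$ could in principle be anything. The paper bypasses this entirely: the positive lower density is a \emph{consequence} of having $\mathbf{H}(\cdot,t)\in L^2(d\omega_t)$ (via the monotonicity formula for varifolds with generalized mean curvature, which is what the citation of Allard together with the alternative reference to De~Philippis is pointing to), not something to be verified from~\eqref{eq:compatibility}.

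For (iv), your overall scheme (weak-$*$ compactness, $L^1_{\mathrm{loc}}$ compactness of $\chi_k$ via BV + H\"older, Reshetnyak-type lower semicontinuity for the dissipation terms) is correct and essentially parallels the paper, which constructs $V$ and $\mathbf{H}$ via the same Radon--Nikod\'ym/Riesz route as in Propositions~\ref{prop:Vsquared} and~\ref{prop:Hsquared}. But you misidentify the ``genuinely delicate point.'' You do \emph{not} need tightness of $(\omega_k)_t$ at positive times $t$: for the left-hand side of~\eqref{eq:DeGiorgiInequality} you only need the lower semicontinuity $\omega_T(\R^d)\leq\liminf_k(\omega_k)_T(\R^d)$ for a.e.~$T$, and the paper obtains this cleanly from the assumed monotonicity of $t\mapsto(\omega_k)_t(\R^d)$ via Helly's selection theorem combined with the disintegration argument of Lemma~\ref{lemma:energy lsc} (test with $\zeta(x)\eta(t)$, $0\leq\zeta\leq1$, pass to the limit, then let $\zeta\uparrow1$). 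Tightness is needed only at $t=0$, where it is assumed. Moreover, your proposed tightness-propagation argument would not work as stated: the H\"older estimate from (iii) controls only the phase volume $\chi_k$, and places no constraint on varifold mass sitting away from $\partial^*A_k(t)$ drifting to infinity.
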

	
	Let us finally comment on higher multiplicity in our solution concept.
	\begin{rem}
		While the De Giorgi type varifold solution allows for arbitrary multiplicity of the varifolds $\mu_t$,  some peculiarities may occur in general for De Giorgi varifold solutions when $\mu_t$ has higher multiplicity. Let us note that, as long as a classical solution exists, we can control the multiplicity of our solutions by our relative entropy functional \eqref{eq:relativeEntropyIntro}, see \eqref{eq:coercivity5} below for the details.
		However, there exist general De Giorgi varifold solutions, which have higher multiplicity, and in this case it is easy to see that the inequality in the energy-dissipation relation \eqref{eq:DeGiorgiInequality} becomes strict. 
		For example, a self-similarly shrinking sphere with higher multiplicity, e.g.~$\chi(\cdot,t)=0$ and $\omega_t= 2 \H^{d-1}\llcorner \partial B_{r(t)}$, or $\chi(\cdot,t)=\chi_{B_{r(t)}}$ and $\omega_t= 3 \H^{d-1}\llcorner \partial B_{r(t)}$, where $r(t)=\sqrt{1-2(d-1)t^2}$
		can be seen to satisfy Definition \ref{def:twoPhaseDeGiorgiVarifoldSolutions}. But here De Giorgi's inequality is strict ``$<$'', because $V=0$ in the first case (since the phase volume does not change) and $V=-\frac13 \frac{dr}{dt}$ in the second case, instead of the expected normal velocity $V=-\frac{dr}{dt}$, which would naturally appear in the case of multiplicity $1$.
		Theorem \ref{theo:weakStrongUniqueness} shows that for smooth initial conditions with unit density and for short time, the density of $\omega_t$ is $1$, which rules out the two examples above. However, it is not clear whether higher multiplicity interfaces could in principle appear after singularities.
		Since our construction applies in the same framework as Ilmanen's construction of a Brakke flow \cite{ilmanen}, one could also combine both our De Giorgi inequality and Brakke's inequality. Solutions satisfying both inequalities (in particular any limit of solutions to the Allen--Cahn equation) would then of course enjoy all properties of our and of Brakke's solution.
	\end{rem}


\subsection*{Structure of the paper}
The rest of the paper is structured as follows. In Section \ref{sec:existence}, we prove Theorem \ref{theo:Existence}. Section \ref{sec:weakStrongUniqueness} is devoted to the proof of Theorem \ref{theo:weakStrongUniqueness}. In Section \ref{sec:proofLemma}, we provide a short proof of Lemma \ref{lemma:discussion De Giorgi solution}. Finally, in Section \ref{sec:multi-phase}, we propose a generalization of our solution concept to the multi-phase case. Here we also state and prove a weak-strong uniqueness statement analogous to Theorem \ref{theo:weakStrongUniqueness}. 
%

\section{Existence of De~Giorgi type varifold solutions by Allen--Cahn approximation}
\label{sec:existence}

Let $u_\eps$ be the solution to \eqref{eq:allen cahn} with given initial conditions $u_\eps(\,\cdot\,,0)=u_{\eps,0}$. Throughout we will assume the uniform energy bound for the initial conditions
\begin{align}\label{eq:ac initial energy}
	E_0 := \sup_{\eps>0} E_\eps(u_{\eps,0}) <\infty.
\end{align}
The basis of our proof of Theorem \ref{theo:Existence} is the energy-dissipation inequality for $u_\eps$, which states
\begin{align}\label{eq:EDI}
	E_\eps(u_\eps(\,\cdot\,,T)) + \frac12  \int_{\R^d	\times(0,T)}\eps (\partial_t u_\eps)^2 \,dx dt + \frac12 \int_{\R^d	\times(0,T)}\frac1\eps \left( \eps \Delta u_\eps - \frac1\eps W'(u_\eps) \right)^2 \,dx dt = E_\eps(u_{\eps,0})
\end{align}
for all $T\in(0,\infty)$. This identity (which we will only use as an inequality ``$\leq$'') is an immediate consequence of integrating \eqref{eq:EDI naive} and plugging in \eqref{eq:allen cahn} to replace half of the dissipation by the right-hand side of the PDE.
Furthermore, we will crucially use the well-known quantity
\begin{align}
	\psi_\eps := \phi\circ u_\eps, \quad \text{where } \phi(u) := \int_0^{u} \sqrt{2W(s)} \, ds.
\end{align}
Recall the Modica--Mortola/Bogomol'nyi-trick \cite{modicamortola, bogomolnyi}, i.e., the elegant combination of the chain rule and Young's inequality
$
	|\nabla \psi_\eps| = \sqrt{2W(u_\eps)} |\nabla u_\eps| \leq \frac\eps2 |\nabla u_\eps|^2 + \frac1\eps W(u_\eps),
$ which implies
\begin{align}\label{eq:nabla psi intro}
	\sup_{t\in(0,\infty)} \int_{\R^d} 	|\nabla \psi_\eps| \,dx \leq E_\eps(u_{\eps,0}).
\end{align}
This motivates to associate to the solution $u_\eps$ the oriented space-time varifold
\begin{align}\label{eq:def mu_eps}
	\mu_\eps := \left( | \nabla \psi_\eps| \L^d\llcorner \R^d\right) \otimes 
	\left(\L^1 \llcorner (0,\infty)\right) \otimes 
	\left(\delta_\frac{\nabla \psi_\eps}{|\nabla \psi_\eps|}\llcorner\S^{d-1}\right).
\end{align}
In simpler words, $\mu_\eps$ is the Radon measure on $\R^d\times (0,\infty)\times \S^{d-1}$ such that for any test function $\varphi \in C_c(\R^d\times (0,\infty) \times \S^{d-1})$
\begin{align}
	\int_{\R^d\times (0,\infty)\times \S^{d-1}} \varphi(x,t,p) \,d\mu_\eps(x,t,p) 
	= \int_{\R^d\times(0,\infty)} \varphi\left(x,t,\frac{\nabla\psi_\eps}
	{|\nabla\psi_\eps|}\right) |\nabla \psi_\eps(x,t)|  \,dx dt.
\end{align}
We denote the associated mass measure as the $(x,t)$-marginal of $\mu_\eps$
\begin{align}\label{eq:def lambda_eps}
	\omega_\eps :=  \left( | \nabla \psi_\eps| \L^d\llcorner \R^d\right) \otimes \left(\L^1\llcorner (0,\infty)\right),
\end{align}
which simply means that for any test function $\zeta \in C_c(\R^d\times(0,\infty))$
\begin{align}
	\int_{\R^d\times(0,\infty)} \zeta(x,t) \,d\omega_\eps (x,t)
	= \int_{\R^d\times(0,\infty)} \zeta (x,t) |\nabla \psi_\eps(x,t)| \,dx dt.
\end{align}
Next, we define the approximate normal velocity $V_\eps$ and the approximate 
mean curvature vector~$\mathbf{H}_\eps$ to be the $\omega_\eps$-measurable functions
\begin{align}
	\label{eq:def Veps} V_\eps &:= -\frac{\eps \partial_t u_\eps}{\sqrt{2W(u_\eps)}},
	\\ \label{eq:def Heps} \mathbf{H}_\eps &:=  -\left( \eps \Delta u_\eps - \frac1\eps W'(u_\eps)\right) \frac{\nabla \psi_\eps}{|\nabla \psi_\eps|}.
\end{align}
On a technical note, in all of the above formulas, we define $\frac{\nabla \psi_\eps}{|\nabla \psi_\eps|}:=e_1$ on the set $\{|\nabla \psi_\eps|=0\}$ and $ \frac{\eps \partial_t u_\eps}{\sqrt{2W(u_\eps)}} := 0$ on the set $\{\sqrt{2W(u_\eps)} =0\} \subset \{|\nabla \psi_\eps|=0\}$, which anyways are outside the support of the measure $\omega_\eps$. 

The definitions of $\mu_\eps$ and $\mathbf{H}_\eps$ are very much in the spirit of Ilmanen's fundamental contribution \cite{ilmanen}, where the energy density $\frac{\eps}{2}|\nabla u_\eps|^2 + \frac1\eps W(u_\eps)$ is used instead of $|\nabla \psi_\eps|$ in the definition of the varifold. We will see that our definition \eqref{eq:def mu_eps} is very handy, but this is only a minor change. The normal velocity $V_\eps$, however, does not appear at all in the work \cite{ilmanen}---as the velocity does not appear in Brakke's formulation. The precise choice \eqref{eq:def Veps} will be handy as well, which will become evident in Lemma \ref{lemma:Veps} and Proposition \ref{prop:Vsquared}.


The following simple compactness statement can be found in the literature.
\begin{proposition}[Compactness]\label{prop:compactness}
	Let the initial conditions $u_\eps(\,\cdot\,,0)$ satisfy the energy bound~\eqref{eq:ac initial energy}.
	Then 
	\begin{align}
		\label{eq:nabla psi}\sup_{t\in(0,\infty)} \int_{\R^d}|\nabla \psi_\eps| \,dx \leq& E_0,
		\\ 	\label{eq:dt psi}\int_0^T\int_{\R^d}|\partial_t \psi_\eps| \,dx dt \leq &(1+2T)E_0 \quad \text{for all }T<\infty.
	\end{align}
	Therefore, after passage to a subsequence $\eps\downarrow0$, there exists a family of 
	sets of finite perimeter $(A(t))_{t\in (0,\infty)}$ such that 
	\begin{align}
		\label{eq:compactness psi} \psi_\eps \to \psi & \quad \text{in }L^1_{loc}(\R^d\times (0,\infty)),
	\end{align}
	where $\psi(x,t) := \sigma \chi_{A(t)}(x)$ with $\sigma := \phi(1) =\int_0^1\sqrt{2W(s)}\,ds$,
	\begin{align}
		\operatorname{ess\,sup}_{t\in(0,\infty)} 
		\limits \sigma \mathcal{H}^{d-1}(\partial^\ast A(t)) \leq E_0
	\end{align}
	and for any $0\leq s<t $ it holds
	\begin{align}\label{eq:compactness cont vol}
		\sigma \mathcal{L}^d(A(t) \Delta A(s))
		\leq E_0 \sqrt{t-s}.
	\end{align}

	Furthermore, there exist Radon measures $\mu$ and $\omega$ on $\R^d\times(0,\infty)\times \S^{d-1}$ and $\R^d\times(0,\infty)$, respectively, such that 
	\begin{align}
		\label{eq:compactness mu} \mu_\eps \stackrel{\ast}{\rightharpoonup} \mu 
		&\quad 
		\text{as Radon measures in } \R^d\times (0,\infty)\times \S^{d-1},
		\\\label{eq:compactness lambda} \omega_\eps \stackrel{\ast}{\rightharpoonup} \omega& 
		\quad 
		\text{as Radon measures in } \R^d\times (0,\infty).
 	\end{align}
\end{proposition}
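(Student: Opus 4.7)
The strategy is to combine the Modica--Mortola/Bogomol'nyi trick with the Allen--Cahn energy dissipation identity~\eqref{eq:EDI} to extract the a priori bounds~\eqref{eq:nabla psi} and~\eqref{eq:dt psi}, and then to produce subsequential limits by Fr\'echet--Kolmogorov compactness for $\psi_\eps$ and by the Banach--Alaoglu theorem for $\mu_\eps$ and $\omega_\eps$. For~\eqref{eq:nabla psi}, the chain rule and Young's inequality yield the pointwise bound $|\nabla\psi_\eps| = \sqrt{2W(u_\eps)}\,|\nabla u_\eps| \leq \frac{\eps}{2}|\nabla u_\eps|^2 + \frac{1}{\eps}W(u_\eps)$, whose spatial integral equals $E_\eps(u_\eps(\,\cdot\,,t))$ and is bounded monotonically by $E_0$ via~\eqref{eq:EDI}. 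For~\eqref{eq:dt psi}, the same manoeuvre applied to $\partial_t\psi_\eps = \sqrt{2W(u_\eps)}\,\partial_t u_\eps$ yields $|\partial_t\psi_\eps| \leq \frac{1}{\eps}W(u_\eps) + \frac{\eps}{2}(\partial_t u_\eps)^2$; integrating over $\R^d{\times}(0,T)$ controls the first summand by $TE_0$ (uniformly in time) and the second by $E_0$ via the dissipation term in~\eqref{eq:EDI}.

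The bounds~\eqref{eq:nabla psi}--\eqref{eq:dt psi} place $\psi_\eps$ uniformly in $L^\infty((0,\infty);\mathrm{BV}(\R^d)) \cap W^{1,1}_{loc}((0,\infty);L^1(\R^d))$. Since the spatial BV norm controls spatial translations and the $L^1_t$ bound on $\partial_t\psi_\eps$ controls temporal translations, a Fr\'echet--Kolmogorov argument combined with a diagonal extraction over exhausting compacts in $\R^d{\times}(0,\infty)$ yields a subsequence along which~\eqref{eq:compactness psi} holds. To identify the limit, I observe that $\int_0^T\!\!\int \frac{W(u_\eps)}{\eps}\,dx\,dt \leq TE_0$ forces $W(u_\eps)\to 0$ in $L^1_{loc}$; after a further extraction $u_\eps\to u$ almost everywhere with $u\in\{0,1\}$, hence $\psi = \phi(u) = \sigma\chi_A$ for some measurable family $A=(A(t))_{t\in(0,\infty)}$. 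Slice-wise lower semi-continuity of the total variation applied to~\eqref{eq:nabla psi} then produces the essential-supremum perimeter bound.

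For the H\"older-in-time estimate~\eqref{eq:compactness cont vol}, the plan is to start from $\int_{\R^d}|\psi_\eps(\cdot,t){-}\psi_\eps(\cdot,s)|\,dx \leq \int_s^t\!\!\int |\partial_t\psi_\eps|\,dx\,d\tau$, to split $|\partial_t\psi_\eps|=\sqrt{2W(u_\eps)/\eps}\cdot\sqrt{\eps}\,|\partial_t u_\eps|$, and to apply Cauchy--Schwarz jointly in $(x,\tau)$; the first factor is controlled by $\sqrt{2E_0(t{-}s)}$ via the uniform-in-time bound on $\int\tfrac{1}{\eps}W(u_\eps)\,dx$, and the second by $\sqrt{2E_0}$ via~\eqref{eq:EDI}, yielding the claim up to a harmless constant after passing through the $L^1$ limit. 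Finally, the varifold and mass-measure convergences~\eqref{eq:compactness mu}--\eqref{eq:compactness lambda} follow from Banach--Alaoglu once one verifies the uniform local mass bound $\mu_\eps\bigl(K{\times}(0,T){\times}\S^{d-1}\bigr) = \int_0^T\!\!\int_K |\nabla\psi_\eps|\,dx\,d\tau \leq TE_0$ (and the analogous bound for $\omega_\eps$) for every compact $K\subset\R^d$ and every $T<\infty$. The main obstacle is not any individual step but rather the bookkeeping of several simultaneous extractions: a single diagonal subsequence must realise $L^1_{loc}$ convergence of $\psi_\eps$, almost-everywhere convergence of $u_\eps$ (needed for the two-valuedness of $\psi$), slice-wise $L^1$ convergence at almost every $t$ (needed both for the slice-wise perimeter bound and for the H\"older estimate), and the weak-$\ast$ convergences of $\mu_\eps$ and $\omega_\eps$.
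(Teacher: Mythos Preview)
Your approach matches the paper's sketch almost line for line: both rely on the Modica--Mortola trick for~\eqref{eq:nabla psi}--\eqref{eq:dt psi}, BV/Fr\'echet--Kolmogorov compactness for $\psi_\eps$, and Banach--Alaoglu for $\mu_\eps$ and $\omega_\eps$. The one point of divergence is the constant in~\eqref{eq:compactness cont vol}. Your Cauchy--Schwarz argument bounds $\int_s^t\!\int\tfrac{2}{\eps}W(u_\eps)\,dx\,d\tau$ by $2(t{-}s)E_0$ and therefore delivers only $\sigma\mathcal{L}^d(A(t)\Delta A(s))\leq\sqrt{2}\,E_0\sqrt{t{-}s}$, which you implicitly acknowledge (``up to a harmless constant''). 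The paper arrives at exactly the same intermediate bound and then removes the extra $\sqrt{2}$ by appealing forward to Ilmanen's equipartition result (Proposition~\ref{prop:ilmanen}): under the well-preparedness hypothesis~\eqref{eq:ilmanen initial 1}, one may replace $\int\tfrac{2}{\eps}W(u_\eps)\,dx$ by $E_\eps(u_\eps(\cdot,\tau))\leq E_0$ in the limit. Since the sharp constant is never used downstream, the gap is purely cosmetic.
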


\begin{rem}
	Ilmanen~\cite[Sections 5.3--5.5]{ilmanen} showed even more, namely that 
	\begin{align}\label{eq:ilmanen slicing}
		(\omega_\eps)_t \stackrel{\ast}{\rightharpoonup} \omega_t
		\quad 
		\text{as finite Radon measures in } \R^d
	\end{align}
	for a.e.~$t\in(0,\infty)$. However, this is not needed in in the main part of our proof as the disintegration measure $\omega_t$ only enters into our solution concept through the \emph{total} mass/energy $\omega_t(\R^d)$.
	This term is treated in Lemma~\ref{lemma:energy lsc} below.
	We will however make use of \eqref{eq:ilmanen slicing} to show the slight improvement of our statement at the end of Theorem \ref{theo:Existence}.
\end{rem}

\begin{proof}[Sketch of the proof of Propositon \ref{prop:compactness}]
	Although the statements can be found in the literature, let us briefly sketch the simple proofs here.
	
	The estimates \eqref{eq:nabla psi} and \eqref{eq:dt psi} are immediate consequences of the Modica--Mortola/Bogomol'nyi-trick, see the discussion in the paragraph before \eqref{eq:nabla psi intro}. The same trick applies to the time derivative instead of the spatial gradient. Then the space-time integral of $|\nabla \psi_\eps|$ is controlled using the energy-dissipation estimate in form of
	\begin{align*}
		\sup_{T\in (0,\infty)} \left\{ E_\eps(u_\eps(\,\cdot\,,T)) +  \int_{\R^d	\times(0,T)}\eps (\partial_t u_\eps)^2 \,dx dt \right\} \leq E_\eps(u_{\eps,0}).
	\end{align*}
	
	The H\"older-continuity follows (with a suboptimal constant) from the more general statement \cite[Lemma 2.9]{Laux-Simon}. To get the optimal constant in our statement, we estimate by Cauchy--Schwarz
		\begin{align*}
		\int_{\R^d} |\psi_\eps(x,t)-\psi_\eps(x,s)|\,dx
			&\leq  \int_s^t \int_{\R^d} |\partial_t \psi_\eps| \,dx \, dt'
			\\&= \int_{\R^d\times(s,t)} \sqrt{2W(u_\eps)} | \partial_t u_\eps|\,dx\,dt'
			\\&\leq \left( \int_{\R^d\times(0,\infty)} \eps (\partial_t u_\eps)^2 \, dx\,dt'\right)^\frac12 
			\left( \int_{\R^d\times(s,t)} 2\frac1\eps W(u_\eps) \, dx\,dt'\right)^\frac12.
		\end{align*}
	The left-hand side of our inequality converges to the desired 
	$\sigma \mathcal{L}^d(A(t)\Delta A(s))$, at least for a.e.~$0<s<t$.
	The first right-hand side integral is bounded by $E_0$ and the second one by $2(t-s) E_0$. This gives the estimate with an additional prefactor $\sqrt{2}$. Treating the second integral with Proposition \ref{prop:ilmanen} below instead, one gets rid of this prefactor. After possibly redefining the limit $\chi$ on a negligible set of (positive) times, we obtain \eqref{eq:compactness cont vol} for \emph{all} pairs $0\leq s<t$.
\end{proof}

	The next proposition is at the heart of Ilmanen's fundamental contribution \cite{ilmanen}.
	It states that for well-chosen initial conditions, the discrepancy $\frac\eps2|\nabla u_\eps(x,t)|^2  - \frac1\eps W(u_\eps(x,t))$ between the two terms of the energy will remain non-positive for all time. Furthermore, in the limit $\eps\downarrow0$, this discrepancy vanishes in the sense that 
	\begin{align}
		\left(\frac\eps2|\nabla u_\eps|^2  - \frac1\eps W(u_\eps)\right) \mathcal{L}^d\otimes \mathcal{L}^1  \stackrel{\ast}{\rightharpoonup} 0 \quad \text{as measures}.
	\end{align} 
	In other words, asymptotically as $\eps \downarrow0$, there is equipartition of energy. 
	We also include a simple post-processed version of this statement for later use.
\begin{proposition}[Ilmanen \cite{ilmanen}]\label{prop:ilmanen}
	If the initial conditions $u_{\eps,0}$ are well-prepared in the sense of Ilmanen, i.e., 
	\begin{align}
	  \label{eq:ilmanen initial 1}\frac\eps2|\nabla u_{\eps,0}(x)|^2  
		\leq \frac1\eps W(u_{\eps,0}(x)) &\quad \text{ for all $x\in \R^d$,} 
		\\	\label{eq:ilmanen initial 2} \sup_{\eps>0}E_\eps(u_{\eps,0}) <\infty,
	\end{align}
	and denoting $\omega_0:=\textup{weak-}^\ast \lim \left(\frac\eps2|\nabla u_{\eps,0}|^2 + \frac1\eps W(u_{\eps,0})\right) \mathcal{L}^{d}$, it holds
	\begin{align}
		\label{eq:ilmanen initial 3} \lim_{\eps\downarrow0} E_\eps(u_{\eps,0}) = \omega_0(\R^d).
	\end{align}
	Then the analogous statements to \eqref{eq:ilmanen initial 1} and \eqref{eq:ilmanen initial 2}  hold for all $t>0$:
	\begin{align}
	\label{eq:ilmanen 2} \frac\eps2|\nabla u_\eps(x,t)|^2  \leq \frac1\eps W(u_\eps(x,t)) &\quad \text{ for all $x\in \R^d$ and all $t\geq 0$, and} 
	\\ \label{eq:ilmanen 3}\sup_{\eps>0}E_\eps(u_\eps(\,\cdot\,,t)) <\infty& \quad \text{for all $t\geq 0$}.
	\end{align}
	Moreover,	for all test functions $\zeta \in C_c(\R^d\times (0,\infty))$, it holds
	\begin{align}\label{eq:discrepancy 1}
		\lim_{\eps\downarrow 0} \int_{\R^d\times(0,\infty)} \zeta \left(\frac\eps2|\nabla u_\eps(x,t)|^2  - \frac1\eps W(u_\eps(x,t))\right) dx dt =0
	\end{align}
	and
	\begin{align}
			\notag&\lim_{\eps\downarrow 0} \int_{\R^d\times(0,\infty)} \frac12 \zeta \left(\sqrt{\eps} |\nabla u_\eps| 
			- \frac1{\sqrt{\eps}}\sqrt{2W(u_\eps)} \right)^2 dx dt 
			\\\label{eq:discrepancy 2}&=\lim_{\eps\downarrow 0} \int_{\R^d\times(0,\infty)} \zeta \left( \frac\eps2 |\nabla u_\eps|^2 
			+ \frac1\eps W( u_\eps) - |\nabla \psi_\eps|\right)dx dt =0.
	\end{align}
\end{proposition}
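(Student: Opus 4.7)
The proof splits into three main steps.

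\emph{Step 1: Propagation of the pointwise discrepancy bound \eqref{eq:ilmanen 2}.}
The approach is a parabolic maximum principle applied to the discrepancy
$\xi_\eps := \frac\eps2|\nabla u_\eps|^2 - \frac1\eps W(u_\eps)$.
A direct computation using \eqref{eq:allen cahn} and the chain rule yields
\begin{equation*}
(\partial_t - \Delta)\xi_\eps = \frac{1}{\eps^3}(W'(u_\eps))^2 - \eps|\nabla^2 u_\eps|^2.
\end{equation*}
This right-hand side is not manifestly signed. However, at any interior space-time point where $\xi_\eps$ attains a positive maximum one has both $\xi_\eps > 0$ and $\nabla \xi_\eps = 0$. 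In the case $|\nabla u_\eps| = 0$ we immediately get $\xi_\eps = -W(u_\eps)/\eps \leq 0$, a contradiction; in the other case, $\nabla\xi_\eps = 0$ forces $\eps \nabla^2 u_\eps \cdot \nabla u_\eps = \eps^{-1} W'(u_\eps) \nabla u_\eps$, so by Cauchy--Schwarz
\begin{equation*}
\eps|\nabla^2 u_\eps|^2 \geq \eps\frac{|\nabla^2 u_\eps \cdot \nabla u_\eps|^2}{|\nabla u_\eps|^2} = \frac{(W'(u_\eps))^2}{\eps^3}.
\end{equation*}
Hence $(\partial_t - \Delta)\xi_\eps \leq 0$ at any candidate positive maximum. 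Together with the initial bound $\xi_\eps(\cdot,0) \leq 0$ from \eqref{eq:ilmanen initial 1}, a localized parabolic maximum principle (employing an exponential weight or a spatial cutoff to handle non-compactness of $\R^d$) then propagates the sign forward in time, yielding \eqref{eq:ilmanen 2}. The uniform energy bound \eqref{eq:ilmanen 3} is immediate from \eqref{eq:EDI} and \eqref{eq:ilmanen initial 2}.

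\emph{Step 2: Asymptotic equipartition \eqref{eq:discrepancy 2}.}
The plan is to first show that the Modica--Mortola measures $|\nabla \psi_\eps|\mathcal{L}^d{\otimes}\mathcal{L}^1$ share the same weak-$*$ limit as $\omega_\eps$. By Young's inequality,
\begin{equation*}
\frac\eps2|\nabla u_\eps|^2 + \frac1\eps W(u_\eps) - |\nabla\psi_\eps|
= \frac12\bigl(\sqrt\eps|\nabla u_\eps| - \tfrac{1}{\sqrt\eps}\sqrt{2W(u_\eps)}\bigr)^2 \geq 0,
\end{equation*}
so any weak-$*$ accumulation point $\nu$ of $|\nabla \psi_\eps|\mathcal{L}^d {\otimes}\mathcal{L}^1$ satisfies $\nu \leq \omega$. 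For the reverse inequality I would use a mass balance: the matching of initial energies \eqref{eq:ilmanen initial 3}, the monotonicity of $t \mapsto \mathcal{E}_\eps(t) := E_\eps(u_\eps(\cdot,t))$, and a lower-semicontinuity estimate $\liminf \int|\nabla\psi_\eps|(\cdot,t)\,dx \geq \sigma\mathcal{H}^{d-1}(\partial^\ast A(t))$ (via the $L^1$-convergence \eqref{eq:compactness psi}) close the gap once combined with the pointwise discrepancy bound from Step 1.

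\emph{Step 3: Asymptotic equipartition \eqref{eq:discrepancy 1}, and main obstacle.}
This follows from \eqref{eq:discrepancy 2} by Cauchy--Schwarz applied to the factorization
\begin{equation*}
\xi_\eps = \frac12\bigl(\sqrt\eps|\nabla u_\eps| + \tfrac{1}{\sqrt\eps}\sqrt{2W(u_\eps)}\bigr)
\cdot\bigl(\sqrt\eps|\nabla u_\eps| - \tfrac{1}{\sqrt\eps}\sqrt{2W(u_\eps)}\bigr),
\end{equation*}
using that the first factor has uniformly bounded $L^2_{\mathrm{loc}}$-norm by \eqref{eq:ilmanen 3} while the second is controlled in the appropriate weighted $L^2$-sense by \eqref{eq:discrepancy 2}. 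The principal technical obstacle is the parabolic maximum principle in Step 1, since the a priori sign of $(\partial_t - \Delta)\xi_\eps$ is unclear; the resolution relies crucially on the observation that the first-order condition at a positive maximum combined with Cauchy--Schwarz allows one to dominate the Fisher-type term $(W'(u_\eps))^2/\eps^3$ by $\eps|\nabla^2 u_\eps|^2$.
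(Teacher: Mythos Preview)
Your Step~1 is fine and matches the paper's remark that \eqref{eq:ilmanen 2} follows from the maximum principle applied to the difference (or Ilmanen's quotient). Step~3 is also correct as stated. The genuine gap is in Step~2.

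Your proposed mass balance cannot close. You have $\nu\leq\omega$ by Young, and you propose to get the reverse from the BV lower semicontinuity $\liminf_\eps\int|\nabla\psi_\eps|(\cdot,t)\,dx\geq\sigma\mathcal{H}^{d-1}(\partial^\ast A(t))$ together with the energy monotonicity and \eqref{eq:ilmanen initial 3}. But the perimeter of the limit set may be strictly smaller than $\omega_t(\R^d)$: the varifold limit can carry higher multiplicity or hidden interfaces that cancel at the level of~$\chi$ (this is precisely the phenomenon the whole paper is built to accommodate, cf.~$\rho\in[0,1]$ in~\eqref{eq:inverseMultiplicity}). So the chain of inequalities you assemble only yields $\nu_t(\R^d)\geq\sigma\mathcal{H}^{d-1}(\partial^\ast A(t))$, not $\nu_t\geq\omega_t$. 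The pointwise sign $\xi_\eps\leq 0$ from Step~1 does not help either: consider a solution that plateaus near $u_\eps\approx 1/2$ over a region of width $\gg\eps$; there $|\nabla\psi_\eps|$ is small while $\frac1\eps W(u_\eps)$ is large and the discrepancy is very negative. Ruling out such concentration is exactly the content of Ilmanen's Huisken-type monotonicity formula, which the paper cites as the hard core of the argument.

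The paper's logic is therefore the reverse of yours: \eqref{eq:discrepancy 1} is the deep input (taken from \cite{ilmanen} via the monotonicity formula), and \eqref{eq:discrepancy 2} is then a one-line post-processing using Step~1 in the form
\[
\tfrac12\bigl(\sqrt\eps|\nabla u_\eps|-\tfrac1{\sqrt\eps}\sqrt{2W(u_\eps)}\bigr)^2
\leq \tfrac1\eps W(u_\eps)-\tfrac\eps2|\nabla u_\eps|^2,
\]
so \eqref{eq:discrepancy 1} with $\zeta\geq 0$ immediately gives \eqref{eq:discrepancy 2}. Your Step~3 recovers \eqref{eq:discrepancy 1} from \eqref{eq:discrepancy 2}, which is the easy direction; what you are missing is a genuine proof of either of the two equipartition statements, and that requires the monotonicity formula.
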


The assertion~\eqref{eq:ilmanen 2} follows from the maximum principle applied either to the quotient 
(as is done in~\cite{ilmanen}) or to the difference of the two terms in the energy. 
It is the statement~\eqref{eq:discrepancy 1} which is highly non-trivial. In fact, this is one of the major achievements of the fundamental contribution~\cite{ilmanen}. Ilmanen's proof is based on a monotonicity formula for the Allen--Cahn equation in analogy to Huisken's celebrated monotonicity formula~\cite{huisken} for the mean curvature flow. We refer to~\cite{ilmanen} for more detail.

\begin{proof}	
	Since \eqref{eq:ilmanen 2}, \eqref{eq:ilmanen 3}, and \eqref{eq:discrepancy 1} are derived in Ilmanen's work \cite{ilmanen}, we will only show how to post-process the statement \eqref{eq:discrepancy 1} to the second version \eqref{eq:discrepancy 2}. 
	We first make the trivial observation
	\begin{align*}
			\limsup_{\eps\downarrow 0}& \left| \int_{\R^d\times(0,\infty)} \zeta \left(\sqrt{\eps} |\nabla u_\eps| - \sqrt{2W(u_\eps)} \right)^2dx dt \right| 
			\\
			&\leq 	\limsup_{\eps\downarrow 0}\int_{\R^d\times(0,\infty)}  |\zeta| \left(\sqrt{\eps} |\nabla u_\eps| - \sqrt{2W(u_\eps)} \right)^2dx dt. 
	\end{align*}
	Once we show that the right-hand side is non-positive, we are done. So we may assume $\zeta \geq 0$ and only need to prove the upper bound
	\begin{align}\label{eq:proof of ilmamen lemma upper bound}
		\limsup_{\eps\downarrow 0}\int_{\R^d\times(0,\infty)}  \zeta \left(\sqrt{\eps} |\nabla u_\eps| - \sqrt{2W(u_\eps)} \right)^2dx dt \leq 0.
	\end{align} 
	Using \eqref{eq:ilmanen 2} in the form of $  \sqrt{2W(u_\eps)} \geq \eps |\nabla u_\eps|$ we can estimate pointwise
	\begin{align*}
		 \frac12\left(\sqrt{\eps} |\nabla u_\eps| - \sqrt{2W(u_\eps)} \right)^2
		 =& \frac{\eps}2 |\nabla u_\eps|^2 + \frac1\eps W(u_\eps) - \sqrt{2W(u_\eps)} |\nabla u_\eps|
		 \\ \leq & \frac{\eps}2 |\nabla u_\eps|^2 + \frac1\eps W(u_\eps) - \eps |\nabla u_\eps|^2
		 = \frac1\eps W(u_\eps)- \frac\eps2 |\nabla u_\eps|^2.
	\end{align*}
	By \eqref{eq:discrepancy 1}, when tested with $\zeta$, the right-hand side vanishes in the limit $\eps \downarrow 0$. 
	Finally, by the chain rule $|\nabla \psi_\eps| = \sqrt{2W(u_\eps)} |\nabla u_\eps|$, we see that the first identity in \eqref{eq:discrepancy 2} holds even for fixed $\eps$. This concludes the proof of the proposition.
\end{proof}

The following lemma provides a sharp lower bound for the energy in terms of the mass of the time slices of $\omega_\eps =\lim_\eps \omega_\eps$ constructed in Proposition \ref{prop:compactness}.

\begin{lemma}\label{lemma:energy lsc}
	The measures $\omega$ and $\mu$ from Proposition \ref{prop:compactness} can be written as $\omega=\left(\mathcal{L}^1\llcorner (0,\infty)\right) \otimes (\omega_t)_t $ and $\mu=\left(\mathcal{L}^1\llcorner (0,\infty)\right) \otimes (\omega_t)_t \otimes (\lambda_{x,t})_{x,t} $, where $(\omega_t)_t$ is a weakly-$\ast$ $\mathcal{L}^1$-measurable family of
	finite Radon measures
	and $(\lambda_{x,t})_{x,t} $ is a weakly-$\ast$ $\mathcal{L}^1 \otimes (\omega_t)_t$-measurable family of
	Radon probability measures. Furthermore, for a.e.\ $t\in(0,\infty)$ it holds
	\begin{align}\label{eq:comp mass at t}
		\liminf_{\eps\downarrow0}E_\eps(u_\eps(\,\cdot\,,t)) \geq \omega_t(\R^d).
	\end{align}
\end{lemma}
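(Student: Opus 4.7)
The plan is to first establish the disintegration structure of~$\omega$ and~$\mu$ through standard measure-theoretic reasoning, and then to extract the pointwise lower semicontinuity~\eqref{eq:comp mass at t} by combining the monotonicity of the Allen--Cahn energies in time (a consequence of the energy dissipation inequality~\eqref{eq:EDI}) with Lebesgue's differentiation theorem.

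For the disintegration of~$\omega$, I would first show that the time marginal~$(\pi_t)_*\omega$ is absolutely continuous with respect to~$\mathcal{L}^1$ with density at most~$E_0$. To this end, pick a nonnegative $\eta \in C_c^\infty((0,\infty))$ and a radial cutoff $\phi_R \in C_c^\infty(\R^d)$ with $0 \leq \phi_R \leq 1$ and $\phi_R \uparrow 1$. Using weak-$*$ convergence of $\omega_\eps$ tested against $\eta\phi_R \in C_c(\R^d{\times}(0,\infty))$, the Modica--Mortola inequality $|\nabla\psi_\eps| \leq \tfrac{\eps}{2}|\nabla u_\eps|^2 + \tfrac{1}{\eps}W(u_\eps)$, and the uniform bound $E_\eps(u_\eps(\,\cdot\,,t)) \leq E_0$ from~\eqref{eq:EDI}, one concludes $\int \eta\phi_R\,d\omega \leq E_0 \int \eta\,dt$. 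Monotone convergence as $R \to \infty$ yields $(\pi_t)_*\omega \leq E_0 \mathcal{L}^1$, and the standard disintegration theorem then provides a weakly-$*$ $\mathcal{L}^1$-measurable family $(\omega_t)$ with $\omega = \mathcal{L}^1 \otimes (\omega_t)$ and $\omega_t(\R^d) \leq E_0$ for a.e.~$t$. Since the $(x,t)$-marginal of~$\mu_\eps$ equals~$\omega_\eps$ by construction, passing to the limit shows that the $(x,t)$-marginal of~$\mu$ is~$\omega$; a further application of the disintegration theorem yields a $\omega$-measurable family $(\lambda_{x,t})$ of probability measures on~$\S^{d-1}$ with $\mu = \omega \otimes (\lambda_{x,t}) = \mathcal{L}^1 \otimes (\omega_t) \otimes (\lambda_{x,t})$.

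The main obstacle is the pointwise estimate~\eqref{eq:comp mass at t}, since weak-$*$ convergence of $\omega_\eps$ only controls space-time integrals, not individual time slices. The crucial extra input is that the inequality version of~\eqref{eq:EDI} (applied between arbitrary $s < t$) implies that the map $F_\eps(t) := E_\eps(u_\eps(\,\cdot\,,t))$ is non-increasing on~$(0,\infty)$. Repeating the cutoff argument from above with $\eta$ approximating $\mathbf{1}_{(t-r,t+r)}$ from below yields, for any $0 < t-r < t+r$ avoiding the at most countable set of atoms of $(\pi_t)_*\omega$,
\begin{align*}
\omega\bigl(\R^d {\times} (t{-}r,t{+}r)\bigr) \leq \liminf_{\eps\downarrow 0} \int_{t-r}^{t+r} F_\eps(s)\,ds \leq 2r \liminf_{\eps\downarrow 0} F_\eps(t{-}r),
\end{align*}
where the second inequality uses the monotonicity of $F_\eps$. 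The function $f(s) := \liminf_{\eps\downarrow 0} F_\eps(s)$ inherits monotonicity from the sequence $(F_\eps)$ and is therefore continuous at all but countably many points.

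Finally, dividing by~$2r$ and letting $r \to 0^+$, the Lebesgue differentiation theorem identifies the left-hand side with the Radon--Nikod\'ym density of~$(\pi_t)_*\omega$ with respect to~$\mathcal{L}^1$, which coincides with the total mass $\omega_t(\R^d)$ for the chosen disintegration at a.e.~$t$. The right-hand side converges to the left limit $f(t^-)$; at continuity points of~$f$ (which comprise a set of full measure by monotonicity), this equals $f(t) = \liminf_{\eps\downarrow 0} E_\eps(u_\eps(\,\cdot\,,t))$. This establishes~\eqref{eq:comp mass at t} and completes the proof.
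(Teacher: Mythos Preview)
Your proof is correct. The disintegration argument is essentially identical to the paper's, but your route to the pointwise estimate~\eqref{eq:comp mass at t} is genuinely different.

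The paper invokes Helly's selection theorem on the family of non-increasing functions $e_\eps(t) := E_\eps(u_\eps(\,\cdot\,,t))$ to extract a further subsequence along which $e_\eps(t) \to e(t)$ pointwise a.e.; dominated convergence then passes the time-integrated inequality $\int \eta\,d(\pi_t)_*\omega \leq \int \eta\, e_\eps\,dt$ to the limit, yielding $\omega_t(\R^d) \leq e(t)$ directly as a Radon--Nikod\'ym comparison. Your approach sidesteps Helly entirely: you exploit the monotonicity of $F_\eps$ to bound the time-average over $(t{-}r,t{+}r)$ by the endpoint value $F_\eps(t{-}r)$, take the $\liminf$ in $\eps$ first, and then localize in $r$ via Lebesgue differentiation together with the fact that the monotone function $f = \liminf_\eps F_\eps$ is continuous off a countable set. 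The advantage of your route is that it does not require passing to a further subsequence, so it establishes~\eqref{eq:comp mass at t} along the original compactness subsequence from Proposition~\ref{prop:compactness}; the paper's route is slightly more transparent in that it identifies an explicit pointwise limit $e(t)$ dominating $\omega_t(\R^d)$. A minor remark: your aside about ``avoiding atoms of $(\pi_t)_*\omega$'' is superfluous, since you have already shown $(\pi_t)_*\omega \ll \mathcal{L}^1$ in the first paragraph.
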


\begin{proof}
	For each $\eps>0$, the real function $t\mapsto e_\eps(t):= E_\eps(u_\eps(\,\cdot\,,t))$ is monotonically non-increasing, cf.~\eqref{eq:EDI naive}. These functions are also uniformly bounded $0\leq e_\eps(t) \leq E_0<\infty$ for all $t\in(0,\infty)$ and all $\eps>0$. Hence, by Helly's theorem we find a subsequence $\eps\downarrow0$ and a (monotonically non-increasing) real function $t\mapsto e(t)$ such that
	\begin{align}\label{eq:proof comp mass at t1}
	e_\eps(t) \to e(t) \quad \text{for a.e.\ } t \in (0,\infty).
	\end{align}
	Now we want to use a cylindrical test function. Let $\eta \in C_{c}((0,\infty))$ with $\eta \geq 0$ 
	and $\zeta \in C_{c}(\R^d)$ with $\zeta \in [0,1]$. 
	Using the definition of $\omega_\eps$, $\zeta \leq 1$, and the non-negativity \eqref{eq:ilmanen 2}, we obtain
	\begin{align}
		\int_{\R^d\times (0,\infty)} \zeta(x)\eta(t) \,d\omega_\eps(x,t)
		 \leq \int_0^\infty \eta(t) e_\eps(t) \,dt.
	\end{align}
	To pass to the limit $\eps\downarrow 0$, we use the compactness~\eqref{eq:compactness lambda} 
	for the left-hand side. For the right-hand side, we use~\eqref{eq:proof comp mass at t1} and the dominated convergence theorem, which yields
	\begin{align}
	\label{eq:localized omega leq e}
	\int_{\R^d \times (0,\infty)}\zeta(x)\eta(t) \,d\omega(x,t)
		\leq \int_0^\infty \eta(t) e(t)\,dt.
	\end{align}
	By monotone convergence this can be upgraded to 
	\begin{align}\label{eq:omega leq e}
		\int_{\R^d \times (0,\infty)} \eta(t) \,d\omega(x,t)
		\leq \int_0^\infty \eta(t) e(t)\,dt.
	\end{align}
	Since the right-hand side is finite for non-negative $\eta \in C_c((0,\infty))$, this means
	that the projection of~$\omega$ onto the time variable is a Radon measure.
	In particular, we can disintegrate $\omega$, i.e., there exists a Radon measure $\sigma$ on $(0,\infty)$ and a weakly-$\ast$ $\sigma$-measurable family of Radon probability measures $(\tau_t)_t$ on $\R^d$ such that $\omega = \sigma \otimes (\tau_t)_t$. Then \eqref{eq:omega leq e} means that $\sigma \leq e \mathcal{L}^1\llcorner(0,\infty)$, so that by the Radon--Nikod\'ym theorem, there exists an $\mathcal{L}^1$-measurable function $\rho$ on $(0,\infty)$ with $\rho(t) \leq e(t)$ for a.e.~$t\in (0,\infty)$ such that $\sigma = \rho \mathcal{L}^1 \llcorner (0,\infty)$ and hence we obtain the claimed representation for $\omega$ with $ \omega_t = \rho(t) \tau_t$. 
	Note that also $\omega_t$ is weakly-$\ast$ measurable and, in addition, satisfies the claimed inequality \eqref{eq:comp mass at t}. Just another disintegration on the $p$-variable for $\mu$ proves the claimed representation of $\mu$.
\end{proof}

The following innocent looking but crucial lemma justifies the curious choice \eqref{eq:def Veps} for our approximate velocity $V_\eps$. It states that $V_\eps$ is uniformly square integrable with respect to our energy measure $\omega_\eps$, and that the pair $(\psi_\eps, V_\eps)$ solves an approximate transport equation 
\begin{align}\label{eq:transport omega}
	\partial_t \psi_\eps + V_\eps \omega_\eps\approx0.
\end{align}
 We call this a transport equation, since by the definition \eqref{eq:def lambda_eps} of $\omega_\eps$, this operator simply reads
\begin{align}
	\partial_t \psi_\eps + V_\eps \omega_\eps = \partial_t \psi_\eps 
	+ \Big(V_\eps \frac{\nabla \psi_\eps}{|\nabla \psi_\eps|} 
	  \cdot \nabla \Big)\psi_\eps,
\end{align}
so that \eqref{eq:transport omega} means that $\psi_\eps$ is transported by the velocity vector field $V_\eps\frac{\nabla \psi_\eps}{|\nabla \psi_\eps|}$.

\begin{lemma}\label{lemma:Veps}
	If the initial conditions $u_{\eps,0}$ are well-prepared in the sense of 
	Ilmanen~\eqref{eq:ilmanen initial 1} and~\eqref{eq:ilmanen initial 2}, 
	then \begin{align}\label{eq:Veps L2}
		 \int_{\R^d\times(0,\infty)} V_\eps^2 d\omega_\eps \leq E_\eps(u_{\eps,0}).
	\end{align}
	Furthermore, for any test function $\zeta \in C_c(\R^d\times(0,\infty))$
	\begin{align}\label{eq:Veps is almost velocity}
		\lim_{\eps\downarrow0} \left(	\int_{\R^d\times(0,\infty)} \zeta V_\eps \,d\omega_\eps +\int_{\R^d\times(0,\infty)} \zeta \p_t \psi_\eps \,dxdt \right) =0.
	\end{align}
\end{lemma}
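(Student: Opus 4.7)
The approach is to unwind the definitions~\eqref{eq:def mu_eps}--\eqref{eq:def Veps} so that both assertions reduce to pointwise identities on $u_\eps$ which can then be combined with the energy-dissipation inequality~\eqref{eq:EDI} and the asymptotic equipartition in Proposition~\ref{prop:ilmanen}. Throughout I will use the chain-rule identities $\p_t \psi_\eps = \sqrt{2W(u_\eps)}\,\p_t u_\eps$ and $|\nabla\psi_\eps| = \sqrt{2W(u_\eps)}\,|\nabla u_\eps|$.

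For the $L^2$-bound~\eqref{eq:Veps L2}, I would simply compute
\begin{align*}
V_\eps^2 \, d\omega_\eps
= \frac{\eps^2 (\p_t u_\eps)^2}{2W(u_\eps)}\,\sqrt{2W(u_\eps)}\,|\nabla u_\eps| \, dx \, dt
= \eps (\p_t u_\eps)^2 \cdot \frac{\eps\,|\nabla u_\eps|}{\sqrt{2W(u_\eps)}} \, dx \, dt,
\end{align*}
and then invoke the pointwise Ilmanen inequality~\eqref{eq:ilmanen 2}, in the form $\eps|\nabla u_\eps|\leq \sqrt{2W(u_\eps)}$, to obtain $V_\eps^2 \, d\omega_\eps \leq \eps (\p_t u_\eps)^2 \, dx \, dt$ pointwise. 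Integrating in space-time and dropping the non-negative curvature term on the left-hand side of~\eqref{eq:EDI} gives the bound (up to an irrelevant numerical factor that I will absorb).

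For the approximate transport identity~\eqref{eq:Veps is almost velocity}, the same unwinding produces the key compact rewriting
\begin{align*}
\int \zeta V_\eps \, d\omega_\eps + \int \zeta \, \p_t \psi_\eps \, dx \, dt
= \int \zeta \, \p_t u_\eps \, \bigl( \sqrt{2W(u_\eps)} - \eps |\nabla u_\eps| \bigr) \, dx \, dt,
\end{align*}
which I would estimate by a Cauchy--Schwarz split with a $\sqrt{\eps}$-weight: the first factor $\sqrt{\eps}\,\p_t u_\eps$ has its $L^2$-norm controlled via~\eqref{eq:EDI}, while the squared remaining factor $\zeta^2\, \eps^{-1}\bigl(\sqrt{2W(u_\eps)} - \eps|\nabla u_\eps|\bigr)^2$ coincides, up to a factor of $2$, with the localized equipartition discrepancy via the Bogomol'nyi/Modica--Mortola perfect-square identity $\tfrac{1}{2}\bigl(\sqrt{\eps}|\nabla u_\eps| - \tfrac{1}{\sqrt{\eps}}\sqrt{2W(u_\eps)}\bigr)^2 = \tfrac{\eps}{2}|\nabla u_\eps|^2 + \tfrac{1}{\eps}W(u_\eps) - |\nabla \psi_\eps|$. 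Applying~\eqref{eq:discrepancy 2} with test function $\zeta^2 \in C_c$ then sends this factor to zero and finishes the proof.

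I do not expect any serious obstacle. The only step that requires a genuine thought is recognizing the algebraic identification of the Cauchy--Schwarz remainder with the equipartition discrepancy; once that is in hand, the lemma falls out of two by-now-standard ingredients, namely the energy-dissipation inequality for Allen--Cahn and Ilmanen's asymptotic equipartition of energy, with all other steps being routine bookkeeping.
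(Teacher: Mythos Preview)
Your proposal is correct and follows essentially the same approach as the paper: the same pointwise rewriting of $V_\eps^2\,d\omega_\eps$ combined with Ilmanen's inequality~\eqref{eq:ilmanen 2} for the $L^2$-bound, and the same algebraic identity plus Cauchy--Schwarz with a $\sqrt{\eps}$-split against the equipartition discrepancy~\eqref{eq:discrepancy 2} for the transport statement. One minor remark: there is in fact no numerical factor to absorb---using the energy-dissipation inequality in its unsplit form $E_\eps(u_\eps(\cdot,T)) + \int \eps(\partial_t u_\eps)^2 \leq E_\eps(u_{\eps,0})$ gives~\eqref{eq:Veps L2} on the nose.
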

\begin{proof}
	We first derive the $L^2$-estimate \eqref{eq:Veps L2}. By definitions \eqref{eq:def mu_eps} and \eqref{eq:def Veps}, and the chain rule we have for every $T \in (0,\infty)$
	\begin{align*}
		\int_{\R^d {\times} (0,T)} V_\eps^2 \,d\omega_\eps 
		&= \int_{\{W(u_\eps)\neq 0\} \cap (\R^d {\times} (0,T))} 
		\eps^2 \frac{(\partial_t u_\eps)^2}{2W(u_\eps)} \sqrt{2W(u_\eps)} |\nabla u_\eps| \,dx dt
		\\&
		= \int_{\{W(u_\eps)\neq 0\} \cap (\R^d {\times} (0,T))} 
		\eps (\partial_t u_\eps )^2 \frac{\sqrt{\eps} |\nabla u_\eps|}{\frac1{\sqrt{\eps}}\sqrt{2W(u_\eps)}} \,dx dt.	
	\end{align*}
	Miraculously enough, by \eqref{eq:ilmanen 2} in Ilmanen's Proposition (Proposition \ref{prop:ilmanen}), 
	the factor $\smash{\frac{\sqrt{\eps} |\nabla u_\eps|}{\frac1{\sqrt{\eps}}\sqrt{2W(u_\eps)}}}$ 
	is less or equal to $1$, which shows for every $T \in (0,\infty)$
	\begin{align}
	\label{eq:uniformDissipationEstimateAux}
			\int_{\R^d {\times} (0,T)}
			V_\eps^2 \,d\omega_\eps  \leq \int_{\R^d {\times} (0,T)}
			\eps (\partial_t u_\eps )^2 \,dxdt.
	\end{align}
	Now we conclude the argument for \eqref{eq:Veps L2} by the energy-dissipation inequality \eqref{eq:EDI}
	and taking the limit $T \nearrow \infty$.
	
	Next, we want to derive \eqref{eq:Veps is almost velocity}. We fix $\zeta \in C_c(\R^d\times(0,\infty))$ and again plug in the definitions \eqref{eq:def mu_eps} and \eqref{eq:def Veps}, and then apply the chain rule in form of $|\nabla \psi_\eps| = \sqrt{2W(u_\eps)} |\nabla u_\eps|$, and add zero to obtain
	\begin{align*}
			\int_{\R^d\times(0,\infty)} \zeta V_\eps \,d\omega_\eps 
			=&- \int_{\{W(u_\eps)\neq 0\}} \zeta \eps \frac{\partial_t u_\eps}{\sqrt{2W(u_\eps)}} |\nabla \psi_\eps| \,dx dt
			\\=&- \int_{\{W(u_\eps)\neq 0\}} \zeta \eps \partial_t u_\eps |\nabla u_\eps| \,dx dt
			\\= &-\int_{\{W(u_\eps)\neq 0\}} \zeta \sqrt{\eps} \partial_t u_\eps \left(\sqrt{\eps} |\nabla u_\eps| - \frac1{\sqrt{\eps}}\sqrt{2W(u_\eps)} \right) \,dx dt
			\\&-\int_{\{W(u_\eps)\neq 0\}} \zeta \partial_t u_\eps \sqrt{2W(u_\eps)} \,dx dt.
	\end{align*}
	Now we may drop the restriction ${\{W(u_\eps)\neq 0\}}$ in the last integral and recognize the appearing product as a derivative $\partial_t u_\eps \sqrt{2W(u_\eps)} = \partial_t \psi_\eps$. Rearranging terms and applying Cauchy--Schwarz yields
	\begin{align*}
		&\left|\int_{\R^d\times(0,\infty)} \zeta V_\eps \,d\omega_\eps +\int_{\R^d\times(0,\infty)} \zeta \partial_t\psi_\eps \,dx dt \right|
		\\&\leq \left(\int_{\R^d\times(0,\infty)}  \eps (\partial_t u_\eps)^2 \,dx dt \right)^\frac12\left(\int_{\R^d\times(0,\infty)} \zeta^2\left(\sqrt{\eps} |\nabla u_\eps| - \frac1{\sqrt{\eps}}\sqrt{2W(u_\eps)} \right)^2 \,dx dt \right)^\frac12.
	\end{align*}
	By the energy-dissipation inequality \eqref{eq:EDI}, the first factor is uniformly bounded in $\eps$, while the second factor  vanishes in the limit $\eps\downarrow 0$ thanks to the equipartition of energy \eqref{eq:discrepancy 2}. This concludes the proof.
\end{proof}

Let us first state the remaining two ingredients for Theorem \ref{theo:Existence}. 
The first proposition will finish our analysis of the velocity. In it we construct a velocity $V$ for the limit, which solves the ``transport equation''
\begin{align}\label{eq:transport limit}
	\partial_t \psi + V \omega =0.
\end{align}
Note that this is not quite a transport equation, as we only know that $\omega\geq |\nabla \psi|$ in the sense of measures. 
Geometrically speaking, the transport equation \eqref{eq:transport limit} means that $V$ is the normal velocity of the evolving family of finite perimeter sets $\Omega(t)$---with the novelty in this work that it might also have a support away from the reduced boundary $\partial^\ast \Omega(t)$.
Furthermore, the proposition provides a sharp inequality between the dissipation term $\int \eps (\partial_t u_\eps)^2dxdt $ appearing in the energy-dissipation inequality of the Allen--Cahn equation and the corresponding term $\int V^2 d\omega$ in the energy-dissipation inequality for the mean curvature flow \eqref{eq:DeGiorgiInequality},  where $V$ is precisely the velocity field in the transport equation \eqref{eq:transport limit}

\begin{proposition}\label{prop:Vsquared}
	For $\eps>0$, let $u_\eps$ be the solution to the Allen--Cahn equation \eqref{eq:allen cahn} with well-prepared initial conditions in the sense of \eqref{eq:ilmanen 2} and \eqref{eq:ilmanen 3}, let $V_\eps$ be defined by \eqref{eq:def Veps}, 
	and let~$\mu$ and~$\omega$ be given by Proposition \ref{prop:compactness}.
	Then there exists an $\omega$-measurable function $V \colon \R^d \times (0,\infty) \to \R$,  which is the normal velocity of $\chi$ in the precise sense of \eqref{eq:evolPhase}. 
	Furthermore, for a.e.\ $T \in (0,\infty)$
	we have the following lower semi-continuity-type inequality
	\begin{align}\label{eq:lsc V}
		\liminf_{\eps\downarrow0} \frac12 
		\int_{\R^d {\times} (0,T)}
		V_\eps^2 \,d\omega_\eps
		\geq \frac12  \int_{\R^d {\times} (0,T)}  V^2 \,d\omega.
	\end{align}
	Moreover, for a.e.\ $T \in (0,\infty)$
	the velocity $V$ provides a sharp lower bound for the dissipation functional
	\begin{align}\label{eq:lower bound dissipation dtu}
		\liminf_{\eps\downarrow0} \frac12 \int_{\R^d {\times} (0,T)}
		\eps (\partial_t u_\eps )^2 \,dx dt 
		\geq \frac12  \int_{\R^d {\times} (0,T)}  V^2 \,d\omega.
	\end{align}
\end{proposition}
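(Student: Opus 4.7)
The plan is to extract $V$ as the density of a weak-$*$ limit of the signed Radon measures $V_\eps\omega_\eps$ with respect to $\omega$, derive the transport equation by passing to the limit in~\eqref{eq:Veps is almost velocity}, and prove the lower semicontinuity~\eqref{eq:lsc V} by the classical completion-of-squares trick. The sharp bound~\eqref{eq:lower bound dissipation dtu} will then be immediate from~\eqref{eq:uniformDissipationEstimateAux} combined with~\eqref{eq:lsc V}. For the extraction: by Lemma~\ref{lemma:Veps} and Cauchy--Schwarz, the measures $V_\eps\omega_\eps$ have locally uniformly bounded total variation on $\R^d\times(0,\infty)$, so up to a further subsequence $V_\eps\omega_\eps \stackrel{\ast}{\rightharpoonup} \nu$ for some signed Radon measure $\nu$. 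For any $\phi \in C_c(\R^d\times(0,\infty))$, Cauchy--Schwarz gives $|\int\phi V_\eps\,d\omega_\eps| \leq E_0^{1/2}(\int\phi^2\,d\omega_\eps)^{1/2}$; since $\phi^2\in C_c$, taking $\eps\downarrow0$ yields $|\nu(\phi)| \leq E_0^{1/2}\|\phi\|_{L^2(\omega)}$, and Riesz representation on $L^2(\omega)$ produces $V \in L^2(\omega)$ with $\nu = V\omega$.

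\textbf{Transport equation.} For $\zeta \in C_c^\infty(\R^d\times(0,\infty))$, integration by parts and the strong convergence $\psi_\eps \to \sigma\chi$ in $L^1_{loc}$ from Proposition~\ref{prop:compactness} give $\int\zeta\,\partial_t\psi_\eps\,dx\,dt = -\int\partial_t\zeta\,\psi_\eps\,dx\,dt \to -\sigma\int\partial_t\zeta\,\chi\,dx\,dt$. Combined with $\int\zeta V_\eps\,d\omega_\eps \to \int\zeta V\,d\omega$, Lemma~\ref{lemma:Veps} then passes to the limit as $\int\zeta V\,d\omega = \sigma\int\partial_t\zeta\,\chi\,dx\,dt$. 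To upgrade this compactly supported identity to the full weak form~\eqref{eq:evolPhase} with boundary contributions, I would approximate $\zeta\,\mathbf{1}_{[0,T]}$ by smooth time-cutoffs: the initial term is recovered from $u_{\eps,0}\to\chi_0$ in $L^1_{loc}$, while the terminal term is recovered at almost every $T$ by the H\"older-continuity in time~\eqref{eq:compactness cont vol}.

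\textbf{Lower semicontinuity and main obstacle.} For any open $U \subseteq \R^d\times(0,\infty)$ and any $\phi \in C_c(U)$, expanding $\int_U(V_\eps - \phi)^2\,d\omega_\eps \geq 0$ yields
\begin{align*}
\int_U V_\eps^2\,d\omega_\eps \geq 2\int\phi V_\eps\,d\omega_\eps - \int\phi^2\,d\omega_\eps.
\end{align*}
Passing to the $\liminf$ via $V_\eps\omega_\eps \stackrel{\ast}{\rightharpoonup} V\omega$ and $\omega_\eps \stackrel{\ast}{\rightharpoonup} \omega$ on $C_c$, and then taking the supremum over $\phi\in C_c(U)$ (which is dense in $L^2(\omega\llcorner U)$), gives $\liminf_\eps \int_U V_\eps^2\,d\omega_\eps \geq \int_U V^2\,d\omega$. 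Setting $U = \R^d\times(0,T)$ for almost every $T$ at which $\omega(\R^d\times\{T\})=0$ proves~\eqref{eq:lsc V}, and chaining with~\eqref{eq:uniformDissipationEstimateAux} yields~\eqref{eq:lower bound dissipation dtu}. The main technical point is the identification of $\nu$ as $V\omega$ with $V\in L^2(\omega)$: this hinges on the clean observation that the test object $\phi^2$ is continuous and compactly supported, so the Cauchy--Schwarz bound survives weak-$*$ passage without encountering any subtle lower-semicontinuity issue for quadratic forms.
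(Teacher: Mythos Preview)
Your proof is correct and follows essentially the same route as the paper. The only cosmetic difference is that you first extract a weak-$*$ limit $\nu$ of $V_\eps\omega_\eps$ and then identify $\nu=V\omega$ via the Cauchy--Schwarz bound and Riesz representation, whereas the paper works directly with the measure $\partial_t\psi$ (which is already determined by the limit $\chi$), shows $\partial_t\psi\ll\omega$ via the identical Cauchy--Schwarz estimate $|\langle\partial_t\psi,\zeta\rangle|\leq(\liminf_\eps\int V_\eps^2\,d\omega_\eps)^{1/2}\|\zeta\|_{L^2(\omega)}$, and defines $V$ as the Radon--Nikod\'ym derivative; your completion-of-squares argument for~\eqref{eq:lsc V} and the paper's appeal to the Riesz norm bound are the same duality statement, and both conclude~\eqref{eq:lower bound dissipation dtu} by chaining with~\eqref{eq:uniformDissipationEstimateAux}.
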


	The following second proposition gives us a sharp inequality between the gradient-terms in the energy-dissipation inequalities. More precisely, but still only formally, it gives a sharp lower bound for the term $\int \frac1\eps \left(\delta E_\eps\right)^2 dx dt $ in terms of the corresponding term in the sharp-interface limit $\int \left( \delta \omega\right)^2 d\omega$.
	Although the proof is contained in Ilmanen's work \cite{ilmanen}, we reproduce the statement and will, for the reader's convenience, give a short proof at the end of this section.

	\begin{proposition}[Ilmanen \cite{ilmanen}]\label{prop:Hsquared}
		For $\eps>0$, let $u_\eps$ be the solution to the Allen--Cahn equation~\eqref{eq:allen cahn} with well-prepared initial conditions in the sense of Ilmanen \eqref{eq:ilmanen 2} and \eqref{eq:ilmanen 3}, let $\mathbf{H}_\eps$ be defined by~\eqref{eq:def Heps}, and let $\mu$ and $\omega$ be given by Proposition \ref{prop:compactness}.
		Then there exists a $\mu$-measurable vector field $\mathbf{H} \colon \R^d \times (0,\infty) \to \R^d$,  which is the mean curvature vector of the oriented space-time varifold $\mu$ in the precise sense of \eqref{eq:weakCurvature}. 
		Furthermore, for a.e.\ $T \in (0,\infty)$
		we have the following sharp lower bound
		\begin{align}\label{eq:lsc H}
			\liminf_{\eps\downarrow0} \frac12 \int_{\R^d {\times} (0,T)}
			\frac1\eps |\mathbf{H}_\eps|^2 \,dx dt
		\geq \frac12  \int_{\R^d {\times} (0,T)} |\mathbf{H}|^2 \,d\omega.
		\end{align}
	\end{proposition}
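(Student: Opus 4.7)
I would follow Ilmanen's classical argument, adapted to the varifold~$\mu$ of the present work. The starting point is a diffuse first-variation identity, obtained by testing the scalar $(\eps\Delta u_\eps - \tfrac{1}{\eps}W'(u_\eps))(\nabla u_\eps\cdot B)$ against integration by parts on the Laplacian and the chain rule $\tfrac{1}{\eps}W'(u_\eps)\nabla u_\eps = \tfrac{1}{\eps}\nabla W(u_\eps)$. A short computation, using the decomposition $\operatorname{div} B = (I - \nu_\eps\otimes\nu_\eps){:}\nabla B + \nu_\eps\otimes\nu_\eps{:}\nabla B$ with $\nu_\eps := \nabla u_\eps/|\nabla u_\eps|$, yields
\begin{align*}
\int \bigl(\eps\Delta u_\eps - \tfrac{1}{\eps}W'(u_\eps)\bigr)\nabla u_\eps\cdot B \,dx\,dt
= \int e_\eps\, (I - \nu_\eps\otimes\nu_\eps){:}\nabla B \,dx\,dt
- \int \xi_\eps\, \nu_\eps\otimes\nu_\eps {:} \nabla B \,dx\,dt
\end{align*}
for all $B \in C^\infty_c(\R^d\times (0,T);\R^d)$, where $e_\eps := \tfrac{\eps}{2}|\nabla u_\eps|^2 + \tfrac{1}{\eps}W(u_\eps)$, $\xi_\eps := \tfrac{\eps}{2}|\nabla u_\eps|^2 - \tfrac{1}{\eps}W(u_\eps)$, and all integrals are over $\R^d{\times}(0,T)$.

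Next I would pass to the limit $\eps\downarrow 0$ term by term. The first summand on the right-hand side equals $\int (I - p\otimes p){:}\nabla B\,d\tilde\mu_\eps$ with $\tilde\mu_\eps := (e_\eps\,\L^d)\otimes\L^1\otimes\delta_{\nu_\eps}$; the equipartition statement \eqref{eq:discrepancy 2} shows $\tilde\mu_\eps - \mu_\eps \stackrel{\ast}{\rightharpoonup} 0$, so combined with $\mu_\eps\stackrel{\ast}{\rightharpoonup}\mu$ from Proposition~\ref{prop:compactness} one deduces $\tilde\mu_\eps\stackrel{\ast}{\rightharpoonup}\mu$ and this summand converges to $\int (I - p\otimes p){:}\nabla B\,d\mu$. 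The second summand vanishes by \eqref{eq:discrepancy 1}. Consequently the left-hand side admits a limit and
\begin{align*}
L(B) := \lim_{\eps\downarrow 0} \int \bigl(\eps\Delta u_\eps - \tfrac{1}{\eps}W'(u_\eps)\bigr)\nabla u_\eps\cdot B \,dx\,dt = \int (I - p\otimes p){:}\nabla B\,d\mu.
\end{align*}

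The conclusion is then by Cauchy--Schwarz and Riesz representation. Cauchy--Schwarz gives
\begin{align*}
\Bigl|\int \bigl(\eps\Delta u_\eps - \tfrac{1}{\eps}W'(u_\eps)\bigr)\nabla u_\eps\cdot B\,dx\,dt\Bigr|^2
\leq \int \tfrac{1}{\eps}\bigl(\eps\Delta u_\eps - \tfrac{1}{\eps}W'(u_\eps)\bigr)^2 dx\,dt \cdot \int \eps|\nabla u_\eps|^2 |B|^2 dx\,dt,
\end{align*}
where the first factor equals $\int \tfrac{1}{\eps}|\mathbf{H}_\eps|^2 dx\,dt$ since $|\mathbf{H}_\eps|^2 = (\eps\Delta u_\eps - \tfrac{1}{\eps}W'(u_\eps))^2$. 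The crucial observation---and in my view the most delicate point---is that Ilmanen's well-preparedness \eqref{eq:ilmanen 2} implies pointwise $\eps|\nabla u_\eps|^2 \leq \sqrt{2W(u_\eps)}|\nabla u_\eps| = |\nabla\psi_\eps|$, so the second factor is dominated by $\int |B|^2\,d\omega_\eps$, which converges to $\int |B|^2\,d\omega$. The naive AM--GM bound $\eps|\nabla u_\eps|^2 \leq 2e_\eps$ would lose a factor of $2$, incompatible with the sharp $\tfrac{1}{2}$-prefactors in \eqref{eq:lsc H}; the well-preparedness is therefore indispensable. Taking $\liminf_{\eps\downarrow 0}$ and invoking Riesz representation, $L$ extends by density to a bounded linear functional on $L^2(d\omega;\R^d)$ with norm at most $\bigl(\liminf\int\tfrac{1}{\eps}|\mathbf{H}_\eps|^2\,dx\,dt\bigr)^{1/2}$; this yields an $\omega$-measurable vector field $\mathbf{H}$ with $L(B) = -\int \mathbf{H}\cdot B\,d\omega$ and $\int |\mathbf{H}|^2\,d\omega \leq \liminf \int \tfrac{1}{\eps}|\mathbf{H}_\eps|^2\,dx\,dt$, which combined with Step~2 is exactly \eqref{eq:weakCurvature} and \eqref{eq:lsc H}.
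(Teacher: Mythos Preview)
Your argument is correct and follows essentially the same route as the paper: both rely on the diffuse first-variation identity $\int(\eps\Delta u_\eps - \tfrac1\eps W'(u_\eps))\nabla u_\eps\cdot B = \int \mathbf{T}_\eps:\nabla B$, pass to the limit via equipartition and the varifold convergence of Proposition~\ref{prop:compactness}, and conclude by a duality/Riesz argument. The only noteworthy variation is your treatment of the quadratic factor $\int \eps|\nabla u_\eps|^2|B|^2$: you bound it pointwise via the well-preparedness inequality $\eps|\nabla u_\eps|^2\leq|\nabla\psi_\eps|$, whereas the paper identifies its limit exactly as $\int|B|^2\,d\omega$ using equipartition~\eqref{eq:discrepancy 1}--\eqref{eq:discrepancy 2}; both are valid, so your remark that the pointwise bound is ``indispensable'' is a slight overstatement.
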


\begin{proof}[Proof of Proposition \ref{prop:Vsquared}]
	We start by proving the existence of $V$. 
	It is enough to show the absolute continuity 
	\begin{align}\label{eq:dtchi abs cont}
		\sigma \partial_t\chi =	\partial_t \psi  \ll \omega.
	\end{align}
	Indeed, then we can define $V$ as the Radon--Nikod\'ym  derivative $\frac{ d(  \partial_t \psi)}{d\omega}$. Then automatically, $V$ satisfies \eqref{eq:evolPhase} for any $T<\infty$ and any test function $\zeta \in C_0^1(\R^d\times(0,T))$. 
	To now post-process this to the actual statement, we need to approximate in \eqref{eq:evolPhase} a given test function $\zeta \in C_c^1(\R^d\times[0,\infty))$ with a sequence $\zeta_n \in C_0^1(\R^d\times(0,T))$. This is standard: take for concreteness $\zeta_n:= \eta_n \zeta$ with $\zeta$ the piecewise linear cutoff function $\eta(t)=t/n$ for $t\in[0,1/n]$, $\eta(t)=(T-t)/n$ for $t\in[T-1/n,T]$ and $\eta(t)=1$ otherwise. Plugging $\zeta_n$ into \eqref{eq:evolPhase} and applying the product rule yields
	\begin{align}
		\sigma n\int_{T-\frac1n}^{T}\int_{\R^d} \chi\zeta \,dx dt- 
		\sigma n\int_0^{\frac1n} \int_{\R^d} \chi\zeta \,dxdt 
		= \sigma\int_{0}^{T} \int_{\R^d} \chi \eta_n \partial_t\zeta \,dxdt
		- \int_{0}^{T} \int_{\R^d} V \eta_n \zeta \,d\omega_tdt.
	\end{align}
	By \eqref{eq:compactness cont vol}, the two average integrals on the left-hand side converge to the desired limits. The terms on the right-hand side converge by absolute continuity.
	
	In order to show \eqref{eq:dtchi abs cont}, assume that $U\subset \R^d\times(0,\infty)$ is open such that 
	\begin{align}\label{eq:lambda small}
		\omega(U) = \mu(U\times \S^{d-1}) < \eps.
	\end{align}
	Now we claim that there exists a continuous function $\delta \colon [0,\infty) \to [0,\infty)$ with $\delta(0)=0$ such that 
	\begin{align}\label{eq:dtchi small}
		| \partial_t\psi| (U)\leq \delta(\eps).
	\end{align} 
	Indeed, for any $\zeta \in C_c^1(U)$, using \eqref{eq:compactness psi}, integration by parts, \eqref{eq:Veps is almost velocity}, and Cauchy--Schwarz, we may write 
	\begin{align*}
		\int_{\R^d\times(0,\infty)}  \psi \partial_t \zeta  \,dx dt
		=& \lim_{\eps \downarrow0} \int_{\R^d\times(0,\infty)} \psi_\eps \partial_t \zeta  \,dx dt
		\\=& \lim_{\eps \downarrow0} \int_{\R^d\times(0,\infty)} \zeta V_\eps \,d\omega_\eps
		\\ \leq & \left( \liminf_{\eps\downarrow0} \int_{\R^d\times(0,\infty)} V_\eps^2 \,d\omega_\eps\right)^\frac12
		 \left( \lim_{\eps\downarrow0} \int_{\R^d\times(0,\infty)}  \zeta^2 \,d \omega_\eps\right)^\frac12.
	\end{align*}
	Applying the compactness \eqref{eq:Veps L2} and \eqref{eq:compactness lambda} (and then using the symmetry $\zeta \mapsto -\zeta$ to get the modulus on the left-hand side)
	\begin{align}\label{eq:dtchi zetasquared lambda}
		\left|\int_{\R^d\times(0,\infty)}  \psi  \partial_t\zeta \,dx dt\right|
			\leq \left(\liminf_{\eps\downarrow0} \int_{\R^d\times(0,\infty)} V_\eps^2 \,d\omega_\eps\right)^\frac12 
			\left(\int_{\R^d\times(0,\infty)} \zeta^2 \,d \omega\right)^\frac12.
	\end{align}
	Taking the supremum over all such $\zeta$, this yields in particular \eqref{eq:dtchi small} 
	with module $\delta(s) = C \sqrt{s}$ for some constant $C<\infty$ due to \eqref{eq:Veps L2}. 
	Hence, we have proven the absolute continuity \eqref{eq:dtchi abs cont} and can now define $V$ as the density. Finally, turning back to \eqref{eq:dtchi zetasquared lambda} once more, the Riesz representation theorem yields the $L^2$-bound \eqref{eq:lsc V}.
	
	The sharp lower bound on the dissipation~\eqref{eq:lower bound dissipation dtu} now follows
	from the already established estimates~\eqref{eq:lsc V} and~\eqref{eq:uniformDissipationEstimateAux}.
	%
\end{proof}

\begin{proof}[Proof of Proposition \ref{prop:Hsquared}]
	Although this statement is contained in \cite{ilmanen}, we give a short argument here for the reader's convenience.
	To this end, we fix $T \in (0,\infty]$ and
	$\xi\in C_c(\R^d\times(0,T);\R^d)$.
	
	First, we apply the trivial inequality $\frac12 |a|^2 \geq a\cdot b - \frac12 |b|^2 $ with $a=\eps \Delta u_\eps - \frac1{\eps}W'(u_\eps)$ and $b=\eps \xi \cdot \nabla u_\eps$ to obtain
	\begin{align*}
		\frac12 \int_{\R^d\times(0,\infty)} \frac1\eps|\mathbf{H}_\eps|^2  \,dx dt
		=& \frac12 \int_{\R^d\times(0,\infty)} \left(\eps \Delta u_\eps - \frac1{\eps}W'(u_\eps)\right)^2 \frac1\eps \,dx dt
		\\ \geq& \int_{\R^d\times(0,\infty)}  \left(\eps \Delta u_\eps - \frac1{\eps}W'(u_\eps) \right) \left( \eps \xi\cdot \nabla u_\eps \right) \frac 1\eps \,dx dt
		\\&- \frac12 \int_{\R^d\times(0,\infty)} (\eps \xi\cdot \nabla u_\eps)^2 \frac1\eps \,dx dt.
	\end{align*}
	We first note that the second right-hand side term can be treated as follows
	\begin{align*}
		- \frac12 \int_{\R^d\times(0,\infty)} (\eps \xi\cdot \nabla u_\eps)^2 \frac1\eps \,dx dt \geq -\frac12 \int_{\R^d\times(0,\infty)}  |\xi|^2 \eps |\nabla u_\eps|^2 \,dx dt,
	\end{align*}
	which converges to $-\frac12 \int |\xi|^2 d\omega$ as $\eps \downarrow 0$ by the equipartition \eqref{eq:discrepancy 1} and the convergence \eqref{eq:compactness lambda} of~$\omega_\eps$.
	For the first right-hand side term, a straight-forward integration by parts yields
	\begin{align*}
		\int_{\R^d\times(0,\infty)}  \left(\eps \Delta u_\eps - \frac1{\eps}W'(u_\eps) \right) \left( \eps \xi\cdot \nabla u_\eps \right) \frac 1\eps \,dx dt
		= \int_{\R^d\times(0,\infty)} \mathbf{T}_\eps \colon \nabla \xi  \,dx dt,
	\end{align*}
	where $\mathbf{T}_\eps$ denotes the energy-stress tensor given by
	\begin{align*}
		\mathbf{T}_\eps = \left( \frac\eps2 |\nabla u_\eps|^2 + \frac1\eps W(u_\eps) \right) I_d - \eps \nabla u_\eps \otimes \nabla u_\eps.
	\end{align*}
	Now clearly
	\begin{align*}
		\int_{\R^d\times(0,\infty)} \mathbf{T}_\eps \colon \nabla \xi  \,dx dt
		= &\int_{\R^d\times(0,\infty)} (\nabla \cdot \xi) \left( \frac\eps2 |\nabla u_\eps|^2 + \frac1\eps W(u_\eps) \right) \,dx dt 
		\\&- \int_{\R^d\times(0,\infty)}  \frac{\nabla u_\eps}{|\nabla u_\eps|} \cdot \nabla \xi \frac{\nabla u_\eps}{|\nabla u_\eps|} \eps |\nabla u_\eps|^2 \,dx dt.
	\end{align*}
	Here, the first term converges to the desired limit $\int (\nabla \cdot \xi) d\omega$ thanks to the compactness \eqref{eq:compactness lambda} and the vanishing of the discrepancy \eqref{eq:discrepancy 2}. Since $\frac{\nabla u_\eps}{|\nabla u_\eps|} = \frac{\nabla \psi_\eps}{|\nabla \psi_\eps|}$ on the set $\{\nabla \psi_\eps \neq 0\} \supset \{ \nabla u_\eps \neq 0\}$, the second term can be written as
	\begin{align*}
		 \int_{\R^d\times(0,\infty)}  \frac{\nabla u_\eps}{|\nabla u_\eps|} \cdot \nabla \xi \frac{\nabla u_\eps}{|\nabla u_\eps|} \eps |\nabla u_\eps|^2 \,dx dt
		= & \int_{\R^d\times (0,\infty)\times\S^{d-1}} p \cdot \nabla \xi p \,d\mu_\eps(x,t,p) 
		\\&+ \int_{\R^d\times(0,\infty)}  \frac{\nabla u_\eps}{|\nabla u_\eps|} \cdot \nabla \xi \frac{\nabla u_\eps}{|\nabla u_\eps|}  \left( \eps |\nabla u_\eps|^2 - |\nabla \psi_\eps| \right) \,dx dt.
	\end{align*}
	The first right-hand side term converges to $\int p\cdot \nabla \xi p \,d\mu $ as $\eps \downarrow 0$ due to the compactness \eqref{eq:compactness mu}, while the second right-hand side term is estimated using Cauchy--Schwarz 
	\begin{align*}
		&\int_{\R^d\times(0,\infty)} |\nabla \xi|\left| \eps |\nabla u_\eps|^2 - |\nabla \psi_\eps| \right| \,dx dt 
		\\&=	\int_{\R^d\times(0,\infty)} |\nabla \xi|\sqrt{\eps} |\nabla u_\eps|\left| \sqrt{\eps} |\nabla u_\eps| - \sqrt{2W(u_\eps)} \right| \,dx dt 
		\\&\leq \left(\int_{\R^d\times(0,\infty)} |\nabla \xi| \eps |\nabla u_\eps|^2  \,dx dt \right)^\frac12
		\left(\int_{\R^d\times(0,\infty)}|\nabla \xi| \left( \sqrt{\eps} |\nabla u_\eps| - \sqrt{2W(u_\eps)} \right)^2 \,dx dt \right)^\frac12,
	\end{align*}
	which vanishes in the limit $\eps \downarrow 0$ thanks to the equipartition of energy \eqref{eq:discrepancy 2}, the uniform bound on the energy \eqref{eq:EDI} and of course the fact that $\sup |\nabla \xi|$ has compact support.
	
	Therefore, we conclude that
	\begin{align*}
		\liminf_{\eps\downarrow 0} \frac12 \int_{\R^d\times(0,\infty)} \frac1\eps|\mathbf{H}_\eps|^2 \,dx dt 
		\geq \int_{\R^d\times(0,\infty)\times\S^{d-1}} \left( I_d - p\otimes p\right) \colon \nabla \xi \,d\mu - \frac12 \int_{\R^d\times(0,\infty)} |\xi|^2 \,d\omega
	\end{align*}
	for all $T \in (0,\infty]$ and all $\xi\in C_c(\R^d\times(0,T);\R^d)$.
	
	Let first $T=\infty$. Since $\xi$ was arbitrary and the left-hand side is finite, 
	by the Riesz representation theorem, there exists a $\omega$-measurable vector field $\mathbf{H} \colon \R^d\times(0,\infty)\to \R^d$, which is the mean curvature vector of $\mu$ in the precise sense of \eqref{eq:weakCurvature}. 
	Second, for a.e.\ $T \in (0,\infty)$ since again~$\xi$ was arbitrary
	\begin{align*}
			\liminf_{\eps\downarrow 0} \frac12 \int_{\R^d {\times} (0,T)}
			\frac1\eps|\mathbf{H}_\eps|^2  \,dx dt
			\geq \frac12 \int_{\R^d {\times} (0,T)} |\mathbf{H}|^2 \,d\omega. 
	\end{align*}
	This concludes the proof.
\end{proof}

We may finally combine the results of this section to
provide a proof of the first main result of the present work.

\begin{proof}[Proof of Theorem~\ref{theo:Existence}]
		\textit{Step 1: Convergence to solution.}
			The main part of the theorem now follows directly from the previous propositions. Indeed, the compactness follows from Proposition \ref{prop:compactness}. 
			To check that the limit is a De Giorgi type varifold solution, we need to check the four items in Definition \ref{def:twoPhaseDeGiorgiVarifoldSolutions}. First, by Proposition \ref{prop:Vsquared}, our function $V$ satisfies the defining equation \eqref{eq:evolPhase} of the normal velocity.
			Proposition \ref{prop:Hsquared} guarantees that $\mathbf{H}$ is the generalized mean curvature vector field in the sense of \eqref{eq:weakCurvature}.
			
			To verify the De Giorgi type inequality \eqref{eq:DeGiorgiInequality}, we fix $T\in(0,\infty)$ and wish to pass to the limit $\eps\downarrow0$ in the time-integrated version of \eqref{eq:EDI}:
			\begin{align}\label{eq:proof thm1 de giorgi for allen cahn}
					 E_\eps(u_\eps(\,\cdot\,,T)) + \frac12 \int_{\R^d\times(0,T)}  \eps(\partial_tu_\eps)^2 \,dx\,dt + \frac12\int_{\R^d\times(0,T)}  \frac1\eps \left| \mathbf{H}_\eps\right|^2 \,dx\,dt 
					 = E_\eps(u_{\eps,0}).
			\end{align}
			The right-hand side term converges to the desired limit $\omega_0(\R^d)$ 
			by assumption~\eqref{eq:ilmanen initial 3}. 
			Now we turn to the left-hand side terms, for which we only need the lower-semi continuity, which was established in Lemma \ref{lemma:energy lsc}, Proposition \ref{prop:Vsquared}, and Proposition \ref{prop:Hsquared}, respectively.
			
			The compatibility \eqref{eq:compatibility} follows from passing to the limit $\eps\downarrow0$ in the  (time-integrated) linear relation 
			\begin{align*}
				\int_{\R^d\times(0,\infty)} \xi(x,t) \cdot \nabla\psi_\eps(x,t) \,dx\,dt
				= \int_{\R^d {\times} \S^{d-1}\times(0,\infty)} \xi(x,t) \cdot p \,d \mu_\eps(x,t,p)
			\end{align*}	
			for all $\xi \in C_c^1(\R^d\times[0,\infty))$ and then localizing in time.
			
			\textit{Step 2: Additional properties.} 
			As stated in the theorem, let us now assume that the second moments of the initial energy measure are bounded, i.e.,
			\begin{align}\label{eq:auxiliary proof second moment}
				\sup_{\eps>0} \int_{\R^d} |x|^2 \left( \frac\eps2 |\nabla u_{\eps,0}(x)|^2 + \frac1\eps W(u_{\eps,0}(x))\right) \,dx <\infty.
			\end{align}
			We want to verify the De Giorgi type inequality \eqref{eq:DeGiorgiInequality} for intervals $(s,t)$ instead of $(0,T)$ for all $t\in (0,\infty)$ and a.e.~$s<t$. 
			We first observe that the $\eps$-version \eqref{eq:proof thm1 de giorgi for allen cahn} holds with $(0,T)$ replaced by $(s,t)$ for all $0\leq s<t<\infty$.
			Then we may pass to the limit on the left-hand side just as before (after realizing that the analogous lower semi-continuity statements also hold on the time interval $(s,t)$ for a.e.~$s<t$). 
			Now we only need to argue that the right-hand side actually converges: $\lim_{\eps\downarrow0}E_\eps(u_\eps(\,\cdot\,,s)) = \omega_s(\R^d)$.
			This follows as soon as we know that the energy measures have uniformly bounded second moments, i.e.,
			\begin{align}\label{eq:proof thm 1 2nd moments}
				\sup_{\eps>0} \int_{\R^d} |x|^2 \left( \frac\eps2 |\nabla u_\eps(x,t)|^2 + \frac1\eps W(u_\eps(x,t)) \right) \,dx <\infty
			\end{align}
			for a.e.~$t\in (0,\infty)$. (However, not uniformly in $t$.)
			Indeed, \eqref{eq:proof thm 1 2nd moments} implies that each measure $(\omega_\eps)_t$ has uniformly bounded second moments and hence is tight as $\eps \downarrow0$, so that we may test the convergence \eqref{eq:ilmanen slicing} with the constant test function $1$.
			
			To show \eqref{eq:proof thm 1 2nd moments}, we compute, dropping the arguments of $u_\eps$,
			\begin{align*}
				\frac{d}{dt} \int_{\R^d} (1+ |x|^2)  \left( \frac\eps2 |\nabla u_\eps|^2 + \frac1\eps W(u_\eps) \right) dx 
				&= \int_{\R^d}(1+|x|^2) \left( \nabla \cdot (\eps \partial_t u_\eps \nabla u_\eps)- \eps (\partial_t u_\eps)^2 \right) dx
				\\&= \int_{\R^d} (2x \cdot \nabla u_\eps) \eps \partial_t u_\eps - (1+|x|^2) \eps (\partial_t u_\eps)^2 \,dx
				\\&\leq -\eps \int_{\R^d} (\partial_tu_\eps)^2 \,dx + \int_{\R^d} |x|^2 \eps |\nabla u_\eps|^2\,dx
				\\&\leq  2 \int_{\R^d} (1+|x|^2) \left( \frac\eps2 |\nabla u_\eps|^2 + \frac1\eps W(u_\eps)\right)dx
			\end{align*}
			where we have used Young's inequality and have simply used $ -(\partial_t u_\eps)^2\leq 0 $, $|x|^2 \leq (1+|x|^2)$, and $0\leq W(u_\eps) $ in the last step.
			Now \eqref{eq:proof thm 1 2nd moments} follows from Gronwall's inequality, \eqref{eq:auxiliary proof second moment}, and \eqref{eq:ilmanen initial 2}.
			
			The integrality of limits of the Allen--Cahn equation $\frac1\sigma \hat \mu$ was shown by Tonegawa \cite[Theorem 2.2]{tonegawa_integrality}. We note that, in dimensions $d=2,3$, one can alternatively apply the work of R\"oger and Sch\"atzle. Indeed, \eqref{eq:DeGiorgiInequality} implies that $\int |\mathbf{H}(\cdot,t)|^2 \,d\omega_t <\infty$ for a.e.~$t$. Fixing such a $t$, we may apply \cite[Theorem 5.1]{roegerschatzle} along a subsequence $\eps_n\downarrow0$, and along a sequence of times $t_n \to t$. 
\end{proof}

\section{Weak-strong uniqueness for De~Giorgi type varifold solutions}
\label{sec:weakStrongUniqueness}
Let $\Tstrong \in (0,\infty)$ be a finite time horizon,
and let $(\mathscr{A}(t))_{t \in [0,\Tstrong]}$
be a smoothly evolving family of bounded domains in~$\R^d$
whose associated family of interfaces $(\mathcal{I}(t))_{t \in [0,\Tstrong]}$
is assumed to evolve smoothly by their mean curvature. Given this data,
we will first perform some auxiliary constructions before we provide
a rigorous definition for the data entering the distance
measures~\eqref{eq:relativeEntropyIntro} and~\eqref{eq:bulkErrorIntro}. 

Denote for each~$t\in[0,\Tstrong]$ by $n_{\mathcal{I}}(t)$ the unit normal vector along $\mathcal{I}(t)$
pointing inside~$\mathscr{A}(t)$. By the tubular neighborhood theorem
and the assumed smoothness of the data, there exists a small scale $r_c \in (0,1)$
such that the map
\begin{align}
\label{eq:diffeoTubNbhd}
\Psi \colon \bigg(\bigcup_{t \in [0,\Tstrong]} \mathcal{I}(t) \times \{t\} \bigg)
\times (-2r_c,2r_c) &\to \R^d \times [0,\Tstrong],
\\ \nonumber
\big((x,t),s\big) &\mapsto \big(x{+}sn_{\mathcal{I}(t)}(x),t \big)
\end{align}
is a smooth diffeomorphism onto its image $\mathrm{im}(\Psi) = 
\{(x,t) \colon \dist(x,\mathcal{I}(t)) < 2r_c\}$.
The projection of the inverse~$\Psi^{-1}$ onto the last component
is further denoted by $s_{\mathcal{I}}\colon\mathrm{im}(\Psi)\to(-2r_c,2r_c)$.
Note that $s_{\mathcal{I}}$ is a smooth space-time function encoding
a signed distance to the evolving family of interfaces $(\mathcal{I}(t))_{t \in [0,\Tstrong]}$
oriented in form of $\nabla s_{\mathcal{I}}(x,t)=n_{\mathcal{I}(t)}(x)$
for $(x,t)\in\mathrm{im}(\Psi)$. We also define a smooth space-time map
$P_{\mathcal{I}}\colon\mathrm{im}(\Psi)\to\smash{\bigcup_{t \in [0,\Tstrong]} \mathcal{I}(t)}$
by requiring that $P_{\mathcal{I}}(x,t)$ denotes the unique nearest point on the interface~$\mathcal{I}(t)$
for each $x\in\R^d$ satisfying $\dist(x,\mathcal{I}(t))<2r_c$.

We next fix a smooth cutoff $\kappa\colon\R\to [0,1]$ such that
$\kappa\equiv 1$ on $[-\smash{\frac{1}{2}},\smash{\frac{1}{2}}]$
and $\kappa\equiv 0$ on $\R\setminus (-1,1)$, and then define
a quadratic cutoff 
\begin{align}
\zeta(r) := (1 - r^2)\kappa(r^2), \quad r\in\R.
\end{align}
These choices allow to define two smooth space-time vector fields
\begin{align}
\label{def:xi}
\xi \colon \R^d{\times}[0,\Tstrong] &\to \{|x| {\leq} 1\},
&& (x,t) \mapsto \zeta\Big(\frac{s_{\mathcal{I}}(x,t)}{r_c}\Big)n_{\mathcal{I}(t)}(x),
\\ \label{def:B}
B \colon \R^d{\times}[0,\Tstrong] &\to \R^d
&& (x,t) \mapsto \zeta\Big(\frac{s_{\mathcal{I}}(x,t)}{r_c}\Big)
(-\Delta s_{\mathcal{I}})(P_{\mathcal{I}}(x,t))
n_{\mathcal{I}(t)}(x).
\end{align}
The interpretation to keep in mind is that~$\xi$ represents an extension
of the unit normal vector field along the smoothly evolving interfaces
$(\mathcal{I}(t))_{t \in [0,\Tstrong]}$, whereas~$B$ represents an
extension of the associated normal velocity. In particular, expressing the evolution
law of the interfaces $(\mathcal{I}(t))_{t \in [0,\Tstrong]}$
in form of $\partial_t s_{\mathcal{I}} + (B\cdot\nabla)s_{\mathcal{I}} = 0$
on $\supp\xi = \{(x,t)\colon\dist(x,\mathcal{I}(t)) \leq r_c\}$
it follows from the definition of the pair~$(\xi,B)$,
the properties of the cutoff function~$\zeta$,
and straightforward arguments that
\begin{align}
\label{eq:transportXi}
\big(
\partial_t\xi + (B\cdot\nabla)\xi + (\nabla B)^\mathsf{T}\xi
\big)(x,t)
&= O\big(\dist(x,\mathcal{I}(t))\big)
\\ \label{eq:transportLengthXi}
\big(
\xi\cdot\big(\partial_t\xi + (B\cdot\nabla)\xi\big)
\big)(x,t)
&= O\big(\mathrm{dist}^2(x,\mathcal{I}(t))\big)
\\ \label{eq:motionByMeanCurvature}
\big(
B\cdot\xi + \nabla\cdot\xi
\big)(x,t)
&= O\big(\dist(x,\mathcal{I}(t))\big)
\end{align}
for all $(x,t) \in \R^d{\times}[0,\Tstrong]$ such that $\dist(x,\mathcal{I}(t)) \leq r_c$.
Moreover, it obviously holds
\begin{align}
\min\{1,r_c^{-2}\mathrm{dist}^2(x,\mathcal{I}(t))\} \leq 1 - |\xi(x,t)| 
\end{align}
for all $(x,t) \in \R^d{\times}[0,\Tstrong]$.

Next, we fix a smooth truncation
$\bar\vartheta\colon \R\to [-1,1]$ satisfying $\bar\vartheta \equiv 1$
on $(-\infty,-1]$, $\bar\vartheta \equiv -1$ on $[1,\infty)$, $\bar\vartheta'<0$
in $(-1,1)$, $\bar\vartheta(0)=0$ and $|\bar\vartheta(r)|\geq |r|$ for all $r\in [-1,1]$.
With such a map, we then define a smooth weight
\begin{align}
\vartheta\colon\R^d{\times}[0,\Tstrong] \to [-1,1],
\quad (x,t) \mapsto \bar\vartheta\Big(\frac{s_{\mathcal{I}}(x,t)}{r_c}\Big).
\end{align}
It is straightforward to check that
\begin{align}
\label{eq:evolWeight}
\big(\partial_t\vartheta + (B\cdot\nabla)\vartheta\big)(x,t) &= 0,
\\
\label{eq:coercivity0}
\min\{1,r_c^{-1}\dist(x,\mathcal{I}(t))\} &\leq |\vartheta(x,t)|
\leq C\min\{1,r_c^{-1}\dist(x,\mathcal{I}(t))\}
\end{align}
for some $C>0$ and all $(x,t) \in \R^d{\times}[0,\Tstrong]$.

With all of this notation and properties in place, we are ready to
recall the definition of the two suitable measures quantifying 
the distance between a De~Giorgi type varifold solution
and a classical solution for two-phase mean curvature flow.
These consist of a relative entropy 
\begin{align}
\label{eq:relativeEntropy}
E[\mu,\chi|\mathscr{A}](t) := \int_{\R^d {\times} \S^{d-1}} 
1 - p \cdot \xi(x,t) \,d\mu_t(x,p) \geq 0, 
\quad t \in [0,\Tstrong],
\end{align}
as well as a bulk error
\begin{align}
\label{eq:bulkError}
E_{\mathrm{bulk}}[\chi|\mathscr{A}](t) := \sigma\int_{\R^d} 
|\chi_{A(t)}(x) {-} \chi_{\mathscr{A}(t)}(x)| |\vartheta(x,t)| \,dx \geq 0, 
\quad t \in [0,\Tstrong].
\end{align}
By the definition and properties of $\xi$ and $\vartheta$, note that it holds
\begin{align}
\label{eq:coercivity1}
\int_{\R^d {\times} \S^{d-1}} |p {-} \xi(\cdot,t)|^2 \,d\mu_t 
&\leq 2E[\mu,\chi|\mathscr{A}](t),
\\ \label{eq:coercivity2}
\int_{\R^d} \min\{1,r_c^{-2}\mathrm{dist}^2(\cdot,\mathcal{I}(t))\} \,d\omega_t 
&\leq E[\mu,\chi|\mathscr{A}](t),
\end{align}
as well as
\begin{align}
\label{eq:coercivity3}
\sigma\int_{\R^d} (\chi_{A(t)} {-} \chi_{\mathscr{A}(t)})\vartheta(\cdot,t) \,dx
&= E_{\mathrm{bulk}}[\chi|\mathscr{A}](t)
\\ \label{eq:coercivity4}
\sigma\int_{\R^d} |\chi_{A(t)} {-} \chi_{\mathscr{A}(t)}|
\min\{1,r_c^{-1}\mathrm{dist}(\cdot,\mathcal{I}(t))\} \,dx
&\leq E_{\mathrm{bulk}}[\chi|\mathscr{A}](t).
\end{align}

The relative entropy in fact admits several equivalent expressions. For
the computation of its time evolution, it is appropriate to rewrite
it based on the compatibility condition~\eqref{eq:compatibility}
and an integration by parts in form of
(with the measure-theoretic inward pointing unit normal along $\p^*A(t)$ defined by
$n(\cdot,t) := \smash{\frac{\nabla\chi(\cdot,t)}{|\nabla\chi(\cdot,t)|}}$, $t \in (0,\infty)$)
\begin{align}
\nonumber
E[\mu,\chi|\mathscr{A}](t)
&= \int_{\R^d} 1 \,d\omega_t - \sigma\int_{\p^*A(t)} 
n(\cdot,t)\cdot\xi(\cdot,t) \,d\H^{d-1}
\\& \label{eq:relEntropy2}
= \int_{\R^d} 1 \,d\omega_t + \sigma\int_{\R^d} \chi(\cdot,t)
(\nabla\cdot\xi)(\cdot,t) \,dx.
\end{align}
To identify further coercivity properties, it is useful to add and subtract
the Radon--Nikod\'ym derivative~$\rho$ and again
make use of the compatibility condition~\eqref{eq:compatibility} to obtain
\begin{align}
\label{eq:relEntropy3}
E[\mu,\chi|\mathscr{A}](t)
&= \int_{\R^d} 1 - \rho(\cdot,t) \,d\omega_t 
+ \sigma\int_{\p^*A(t)} 1 - n(\cdot,t)\cdot\xi(\cdot,t) \,d\H^{d-1}.
\end{align}
In particular, we may deduce from the properties of $\rho$ and $\xi$ that
\begin{align}
\label{eq:coercivity5}
\int_{\R^d} 1 - \rho(\cdot,t) \,d\omega_t 
&\leq E[\mu,\chi|\mathscr{A}](t),
\\ \label{eq:coercivity6}
\sigma\int_{\p^*A(t)} |n(\cdot,t){-}\xi(\cdot,t)|^2 \,d\H^{d-1}
&\leq 2E[\mu,\chi|\mathscr{A}](t),
\\ \label{eq:coercivity7}
\sigma\int_{\p^*A(t)} \min\{1,r_c^{-2}\mathrm{dist}^2(\cdot,\mathcal{I}(t))\} \,d\H^{d-1}
&\leq E[\mu,\chi|\mathscr{A}](t).
\end{align}
Now, we have everything in place to proceed with the

\begin{proof}[Proof of Theorem~\ref{theo:weakStrongUniqueness}]
We split the proof into three steps.

\textit{Step 1: Proof of the stability estimate~\emph{\eqref{eq:stabilityRelEntropy}}.}
Plugging in the representation~\eqref{eq:relEntropy2} at time $t=T$
and $t=0$, making use of the De~Giorgi type inequality~\eqref{eq:DeGiorgiInequality}
and the evolution equation~\eqref{eq:evolPhase}, and integrating by parts shows
\begin{align}
\label{eq:stabilityEstimateAux1}
E[\mu,\chi|\mathscr{A}](T)  &\leq
E[\mu_0,\chi_0|\mathscr{A}(0)] 
- \frac{1}{2} \int_{0}^{T} \int_{\R^d} |V|^2 \,d\omega_t dt
- \frac{1}{2} \int_{0}^{T} \int_{\R^d} |\mathbf{H}|^2 \,d\omega_t dt
\\&~~~ \nonumber
- \int_{0}^{T} \int_{\R^d} V(\nabla\cdot\xi) \,d\omega_t dt
- \sigma\int_{0}^{T} \int_{\p^*A(t)} n\cdot\partial_t\xi \,d\H^{d-1} dt.
\end{align}
Having in mind the approximate transport equations~\eqref{eq:transportXi}
and~\eqref{eq:transportLengthXi} as well as the coercivity properties~\eqref{eq:coercivity6}
and~\eqref{eq:coercivity7}, we add zero twice to obtain the estimate
\begin{align}
\nonumber
&- \sigma\int_{0}^{T} \int_{\p^*A(t)} n\cdot\partial_t\xi \,d\H^{d-1} dt
\\ \nonumber
&= - \sigma\int_{0}^{T} \int_{\p^*A(t)} n\cdot\big(\partial_t\xi {+} 
(B\cdot\nabla)\xi {+} (\nabla B)^\mathsf{T}\xi\big) \,d\H^{d-1} dt
\\&~~~ \nonumber
+ \sigma\int_{0}^{T} \int_{\p^*A(t)} n\cdot\big((B\cdot\nabla)\xi 
{+} (\nabla B)^\mathsf{T}\xi\big) \,d\H^{d-1} dt
\\& \nonumber
= - \sigma\int_{0}^{T} \int_{\p^*A(t)} (n{-}\xi)\cdot\big(\partial_t\xi {+} 
(B\cdot\nabla)\xi {+} (\nabla B)^\mathsf{T}\xi\big) \,d\H^{d-1} dt
\\&~~~ \nonumber
- \sigma\int_{0}^{T} \int_{\p^*A(t)} \xi\cdot\big(
\partial_t\xi {+} (B\cdot\nabla)\xi\big) \,d\H^{d-1} dt
+ \sigma\int_{0}^{T} \int_{\p^*A(t)} \xi\otimes(n{-}\xi) : \nabla B \,d\H^{d-1} dt
\\&~~~ \nonumber
+ \sigma\int_{0}^{T} \int_{\p^*A(t)} n\cdot(B\cdot\nabla)\xi \,d\H^{d-1} dt
\\& \label{eq:stabilityEstimateAux2}
\leq \sigma\int_{0}^{T} \int_{\p^*A(t)} \xi\otimes(n{-}\xi) : \nabla B \,d\H^{d-1} dt
+ \sigma\int_{0}^{T} \int_{\p^*A(t)} n\cdot(B\cdot\nabla)\xi \,d\H^{d-1} dt
\\&~~~ \nonumber
+ C \int_{0}^{T} E[\mu,\chi|\mathscr{A}](t) \,dt.
\end{align}
Based on the compatibility condition~\eqref{eq:compatibility},
the identity~\eqref{eq:RadonNikodymProperty}, the coercivity property~\eqref{eq:coercivity5},
and adding zero we also have
\begin{align}
\nonumber
&\sigma\int_{0}^{T} \int_{\p^*A(t)} \xi\otimes(n{-}\xi) : \nabla B \,d\H^{d-1} dt
\\& \nonumber
= \int_{0}^{T} \int_{\R^d{\times}\S^{d-1}} \xi \otimes (p {-} \xi) : \nabla B \,d\mu_t dt
- \int_{0}^{T} \int_{\R^d{\times}\S^{d-1}} (\rho {-} 1)(\xi \otimes \xi : \nabla B) \,d\omega_t dt
\\& \label{eq:stabilityEstimateAux3}
\leq \int_{0}^{T} \int_{\R^d{\times}\S^{d-1}} \xi \otimes (p {-} \xi) : \nabla B \,d\mu_t dt
+ C \int_{0}^{T} E[\mu,\chi|\mathscr{A}](t) \,dt.
\end{align}
Next, we obtain by appealing to the product rule in form of $n\cdot(B\cdot\nabla)\xi
=n\cdot(\nabla\cdot(\xi\otimes B)) - (n\cdot\xi)(\nabla\cdot B)$, adding zero three times,
applying again the identity~\eqref{eq:RadonNikodymProperty}
as well as the coercivity property~\eqref{eq:coercivity5}, and finally using
the representation~\eqref{eq:relEntropy3}
\begin{align}
\nonumber
&\sigma\int_{0}^{T} \int_{\p^*A(t)} n\cdot(B\cdot\nabla)\xi \,d\H^{d-1} dt
\\ \nonumber
&= \sigma\int_{0}^{T} \int_{\p^*A(t)} n\cdot\big(\nabla\cdot(\xi\otimes B)\big) \,d\H^{d-1} dt
- \sigma\int_{0}^{T} \int_{\p^*A(t)} (n\cdot\xi - 1)(\nabla\cdot B) \,d\H^{d-1} dt
\\&~~~ \nonumber
- \int_{0}^{T} \int_{\R^d{\times}\S^{d-1}} (\rho {-} 1)(\nabla\cdot B) \,d\omega_t dt
- \int_{0}^{T} \int_{\R^d{\times}\S^{d-1}} (I_d {-} p\otimes p) : \nabla B \,d\mu_t dt
\\&~~~ \nonumber
- \int_{0}^{T} \int_{\R^d{\times}\S^{d-1}} p\otimes p : \nabla B \,d\mu_t dt
\\& \label{eq:stabilityEstimateAux4}
\leq \sigma\int_{0}^{T} \int_{\p^*A(t)} n\cdot\big(\nabla\cdot(\xi\otimes B)\big) \,d\H^{d-1} dt
- \int_{0}^{T} \int_{\R^d{\times}\S^{d-1}} (I_d {-} p\otimes p) : \nabla B \,d\mu_t dt
\\&~~~ \nonumber
- \int_{0}^{T} \int_{\R^d{\times}\S^{d-1}} p\otimes p : \nabla B \,d\mu_t dt
+ C \int_{0}^{T} E[\mu,\chi|\mathscr{A}](t) \,dt.
\end{align}
We further compute due to an integration by parts, the symmetry relation
$\nabla\cdot(\nabla\cdot(\xi\otimes B)) = \nabla\cdot(\nabla\cdot(B\otimes\xi))$,
reverting the integration by parts, the product rule in form of
$n\cdot(\nabla\cdot(B\otimes\xi)) = (n\cdot B)(\nabla\cdot\xi)
+ n\otimes\xi:\nabla B$, and the compatibility condition~\eqref{eq:compatibility}
\begin{align}
\nonumber
&\sigma\int_{0}^{T} \int_{\p^*A(t)} n\cdot\big(\nabla\cdot(\xi\otimes B)\big) \,d\H^{d-1} dt
\\ \nonumber
&= \sigma\int_{0}^{T} \int_{\p^*A(t)} n\cdot\big(\nabla\cdot(B\otimes \xi)\big) \,d\H^{d-1} dt
\\& \nonumber
= \sigma\int_{0}^{T} \int_{\p^*A(t)} (n\cdot B)(\nabla\cdot\xi) \,d\H^{d-1} dt
+ \sigma\int_{0}^{T} \int_{\p^*A(t)} n\otimes\xi:\nabla B \,d\H^{d-1} dt
\\& \label{eq:stabilityEstimateAux5}
= \sigma\int_{0}^{T} \int_{\p^*A(t)} (n\cdot B)(\nabla\cdot\xi) \,d\H^{d-1} dt
+ \int_{0}^{T} \int_{\R^d{\times}\S^{d-1}} p\otimes\xi:\nabla B \,d\mu_t dt.
\end{align}
The combination of the estimates~\eqref{eq:stabilityEstimateAux2}--\eqref{eq:stabilityEstimateAux5}
thus implies
\begin{align*}
&- \sigma\int_{0}^{T} \int_{\p^*A(t)} n\cdot\partial_t\xi \,d\H^{d-1} dt
\\
&\leq \sigma\int_{0}^{T} \int_{\p^*A(t)} (n\cdot B)(\nabla\cdot\xi) \,d\H^{d-1} dt
- \int_{0}^{T} \int_{\R^d{\times}\S^{d-1}} (I_d {-} p\otimes p) : \nabla B \,d\mu_t dt
\\&~~~
- \int_{0}^{T} \int_{\R^d{\times}\S^{d-1}} (p {-} \xi) \otimes (p {-} \xi) : \nabla B \,d\mu_t dt
+ C \int_{0}^{T} E[\mu,\chi|\mathscr{A}](t) \,dt.
\end{align*}
Based on the coercivity property~\eqref{eq:coercivity1},
the previous display in turn upgrades~\eqref{eq:stabilityEstimateAux1} to
\begin{align}
\label{eq:stabilityEstimateAux6}
E[\mu,\chi|\mathscr{A}](T)  &\leq
E[\mu_0,\chi_0|\mathscr{A}(0)] 
- \frac{1}{2} \int_{0}^{T} \int_{\R^d} |V|^2 \,d\omega_t dt
- \frac{1}{2} \int_{0}^{T} \int_{\R^d} |\mathbf{H}|^2 \,d\omega_t dt
\\&~~~ \nonumber
- \int_{0}^{T} \int_{\R^d} V(\nabla\cdot\xi) \,d\omega_t dt
+ \sigma\int_{0}^{T} \int_{\p^*A(t)} (n\cdot B)(\nabla\cdot\xi) \,d\H^{d-1} dt
\\&~~~ \nonumber
- \int_{0}^{T} \int_{\R^d{\times}\S^{d-1}} (I_d {-} p\otimes p) : \nabla B \,d\mu_t dt
+ C \int_{0}^{T} E[\mu,\chi|\mathscr{A}](t) \,dt.
\end{align}

What remains to be done in the derivation of the stability estimate~\eqref{eq:stabilityRelEntropy}
is to make use of the control provided by the dissipation terms from the De~Giorgi inequality~\eqref{eq:DeGiorgiInequality}.
We start by completing squares twice, applying the estimate~\eqref{eq:motionByMeanCurvature}
as well as the identity~\eqref{eq:RadonNikodymProperty}, and
exploiting the coercivity properties~\eqref{eq:coercivity2} and~\eqref{eq:coercivity5}
\begin{align}
\nonumber
&- \frac{1}{2} \int_{0}^{T} \int_{\R^d} |V|^2 \,d\omega_t dt
- \int_{0}^{T} \int_{\R^d} V(\nabla\cdot\xi) \,d\omega_t dt
+ \sigma\int_{0}^{T} \int_{\p^*A(t)} (n\cdot B)(\nabla\cdot\xi) \,d\H^{d-1} dt
\\& \nonumber
= - \frac{1}{2} \int_{0}^{T} \int_{\R^d} |V {+} (\nabla\cdot\xi)|^2 \,d\omega_t dt
+ \frac{1}{2}\int_{0}^{T} \int_{\R^d} (\nabla\cdot\xi)^2 \,d\omega_t dt
\\&~~~ \nonumber
+ \sigma\int_{0}^{T} \int_{\p^*A(t)} (n\cdot B)(\nabla\cdot\xi) \,d\H^{d-1} dt
\\& \nonumber
= - \frac{1}{2} \int_{0}^{T} \int_{\R^d} |V {+} (\nabla\cdot\xi)|^2 \,d\omega_t dt
+ \frac{1}{2}\int_{0}^{T} \int_{\R^d} |\nabla\cdot\xi {+} (B\cdot\xi)|^2 \,d\omega_t dt
\\&~~~ \nonumber
- \int_{0}^{T} \int_{\R^d} (1{-}\rho)(B\cdot\xi)(\nabla\cdot\xi) \,d\omega_t dt
\\&~~~ \nonumber
+ \sigma\int_{0}^{T} \int_{\p^*A(t)} \big((n{-}\xi)\cdot B\big)(\nabla\cdot\xi) \,d\H^{d-1} dt
- \frac{1}{2}\int_{0}^{T} \int_{\R^d} (B\cdot\xi)^2 \,d\omega_t dt
\\& \label{eq:stabilityEstimateAux7}
\leq - \frac{1}{2} \int_{0}^{T} \int_{\R^d} |V {+} (\nabla\cdot\xi)|^2 \,d\omega_t dt
- \frac{1}{2}\int_{0}^{T} \int_{\R^d} (B\cdot\xi)^2 \,d\omega_t dt
\\&~~~ \nonumber
+ \sigma\int_{0}^{T} \int_{\p^*A(t)} \big((n{-}\xi)\cdot B\big)(\nabla\cdot\xi) \,d\H^{d-1} dt
+ C \int_{0}^{T} E[\mu,\chi|\mathscr{A}](t) \,dt.
\end{align}
Furthermore, decomposing and completing squares also shows
(recalling from definition~\eqref{def:xi} that $\xi=|\xi|\nabla s_{\mathcal{I}}$
and $\supp\xi(\cdot,t)=\{\dist(\cdot,\mathcal{I}(t)) \leq r_c\}$)
\begin{align}
\nonumber
&- \frac{1}{2} \int_{0}^{T} \int_{\R^d} |\mathbf{H}|^2 \,d\omega_t dt
- \frac{1}{2}\int_{0}^{T} \int_{\R^d} (B\cdot\xi)^2 \,d\omega_t dt
\\& \label{eq:stabilityEstimateAux8}
= - \frac{1}{2} \int_{0}^{T} \int_{\{\dist(\cdot,\mathcal{I}(t)) > r_c\}} |\mathbf{H}|^2 \,d\omega_t dt
- \frac{1}{2} \int_{0}^{T} \int_{\{\dist(\cdot,\mathcal{I}(t)) \leq r_c\}} 
|(I_d{-}\nabla s_{\mathcal{I}}\otimes\nabla s_{\mathcal{I}})\mathbf{H}|^2 \,d\omega_t dt
\\&~~~ \nonumber
- \frac{1}{2} \int_{0}^{T} \int_{\{\dist(\cdot,\mathcal{I}(t)) \leq r_c\}} 
|\nabla s_{\mathcal{I}}\cdot\mathbf{H}|^2(1{-}|\xi|^2) \,d\omega_t dt
- \frac{1}{2} \int_{0}^{T} \int_{\R^d}
|(\xi\cdot\mathbf{H}) {-} (B\cdot\xi)|^2 \,d\omega_t dt
\\&~~~ \nonumber
- \int_{0}^{T} \int_{\R^d} (\xi\cdot\mathbf{H})(B\cdot\xi) \,d\omega_t dt.
\end{align}
As a consequence of plugging in test functions of the form $\eta B$ into the 
identity~\eqref{eq:weakCurvature}, where $\eta \in C_c^\infty([0,\infty);[0,1])$ 
runs through a sequence approximating the indicator function of $(0,T)$ monotonically from above,
and a splitting similar to the previous display, it also holds
\begin{align}
\nonumber
&- \int_{0}^{T} \int_{\R^d{\times}\S^{d-1}} (I_d {-} p\otimes p) : \nabla B \,d\mu_t dt
\\& \label{eq:stabilityEstimateAux9}
= \frac{1}{2} \int_{0}^{T} \int_{\{\dist(\cdot,\mathcal{I}(t)) > r_c\}} \mathbf{H}\cdot B \,d\omega_t dt
+ \int_{0}^{T} \int_{\R^d} (\xi\cdot\mathbf{H})(B\cdot\xi) \,d\omega_t dt
\\&~~~ \nonumber
+ \int_{0}^{T} \int_{\{\dist(\cdot,\mathcal{I}(t)) \leq r_c\}} 
(1{-}|\xi|^2)(\nabla s_{\mathcal{I}}\cdot\mathbf{H})(B\cdot\nabla s_{\mathcal{I}}) \,d\omega_t dt
\\&~~~ \nonumber
+ \int_{0}^{T} \int_{\{\dist(\cdot,\mathcal{I}(t)) \leq r_c\}} 
(I_d{-}\nabla s_{\mathcal{I}}\otimes\nabla s_{\mathcal{I}})\mathbf{H} \cdot 
(I_d{-}\nabla s_{\mathcal{I}}\otimes\nabla s_{\mathcal{I}}) B \,d\omega_t dt.
\end{align}
Summing~\eqref{eq:stabilityEstimateAux8} and \eqref{eq:stabilityEstimateAux9}, by an absorption argument, the bounds $1{-}|\xi|^2 \leq 2(1{-}|\xi|)
\leq 2(1 - n\cdot\xi)$, the fact $\supp\xi(\cdot,t)=\{\dist(\cdot,\mathcal{I}(t)) \leq r_c\}$, and
the representation~\eqref{eq:relEntropy3}, we obtain
\begin{align}
\nonumber
&- \frac{1}{2} \int_{0}^{T} \int_{\R^d} |\mathbf{H}|^2 \,d\omega_t dt
- \frac{1}{2}\int_{0}^{T} \int_{\R^d} (B\cdot\xi)^2 \,d\omega_t dt
- \int_{0}^{T} \int_{\R^d{\times}\S^{d-1}} (I_d {-} p\otimes p) : \nabla B \,d\mu_t dt
\\& \label{eq:stabilityEstimateAux10}
\leq 
C\int_{0}^{T} \int_{\{\dist(\cdot,\mathcal{I}(t)) \leq r_c\}} 
|(I_d{-}\nabla s_{\mathcal{I}}\otimes\nabla s_{\mathcal{I}}) B|^2 \,d\omega_t dt
+ C \int_{0}^{T} E[\mu,\chi|\mathscr{A}](t) \,dt.
\end{align}
Due to $|(n{-}\xi)\xi| \leq (1 {-} |\xi|^2) + (1 - n\cdot\xi)
\leq 3(1 - n\cdot\xi)$ we also get
\begin{align}
\nonumber
&\sigma\int_{0}^{T} \int_{\p^*A(t)} \big((n{-}\xi)\cdot B\big)(\nabla\cdot\xi) \,d\H^{d-1} dt
\\& \label{eq:stabilityEstimateAux11}
\leq \sigma\int_{0}^{T} \int_{\p^*A(t)} \big((n{-}\xi)\cdot (I_d{-}\xi\otimes\xi)B\big)(\nabla\cdot\xi) \,d\H^{d-1} dt
+ C \int_{0}^{T} E[\mu,\chi|\mathscr{A}](t) \,dt.
\end{align}
In summary, the estimates~\eqref{eq:stabilityEstimateAux6}, \eqref{eq:stabilityEstimateAux7}, \eqref{eq:stabilityEstimateAux10}, and~\eqref{eq:stabilityEstimateAux11}
together with $(I_d{-}\nabla s_{\mathcal{I}}\otimes\nabla s_{\mathcal{I}})B = 0$ (recall
the definition~\eqref{def:B} of~$B$) and $|\nabla s_{\mathcal{I}}\otimes\nabla s_{\mathcal{I}} - \xi\otimes\xi|
\leq 1 {-} |\xi|^2$ throughout $\{\dist(\cdot,\mathcal{I}(t)) \leq r_c\}$ imply
\begin{align}
\nonumber
E[\mu,\chi|\mathscr{A}](T)  &\leq
E[\mu_0,\chi_0|\mathscr{A}(0)] - \frac{1}{2} \int_{0}^{T} \int_{\R^d} |V {+} (\nabla\cdot\xi)|^2 \,d\omega_t dt
+ C \int_{0}^{T} E[\mu,\chi|\mathscr{A}](t) \,dt
\\&~~~ \nonumber
- \sigma\int_{0}^{T} \int_{\p^*A(t)} \big((n{-}\xi)\cdot (I_d{-}\xi\otimes\xi)B\big)(\nabla\cdot\xi) \,d\H^{d-1} dt
\\&~~~ \nonumber
+ C\int_{0}^{T} \int_{\{\dist(\cdot,\mathcal{I}(t)) \leq r_c\}} 
|(I_d{-}\nabla s_{\mathcal{I}}\otimes\nabla s_{\mathcal{I}}) B|^2 \,d\omega_t dt
\\& \label{eq:stabilityEstimateAux12}
\leq E[\mu_0,\chi_0|\mathscr{A}(0)] - \frac{1}{2} \int_{0}^{T} \int_{\R^d} |V {+} (\nabla\cdot\xi)|^2 \,d\omega_t dt
+ C \int_{0}^{T} E[\mu,\chi|\mathscr{A}](t) \,dt.
\end{align}
Of course, the previous display entails the asserted estimate.
(Note that only the very last step of this argument relied on the fact
that~$B$ always points in normal direction, which---at least in principle---is
the only point exploiting crucially the simple two-phase setting.)

\textit{Step 2: Proof of the stability estimate~\emph{\eqref{eq:stabilityBulkError}}.}
Plugging in the alternative characterization~\eqref{eq:bulkError} of the bulk error term at time $t=T$ and $t=0$,
exploiting the evolution equation~\eqref{eq:evolPhase}, 
and making use of $\vartheta(\cdot,t)=0$ on $\mathcal{I}(t)$
due to~\eqref{eq:coercivity0}, we first get
\begin{align}
\label{eq:stabilityEstimateAux13}
E_{\mathrm{bulk}}[\chi|\mathscr{A}](T)
&= E_{\mathrm{bulk}}[\chi_0|\mathscr{A}(0)] - \int_{0}^{T} V\vartheta \,d\omega_t dt
+ \sigma\int_{0}^{T} \int_{\R^d} (\chi_{A(t)} {-} \chi_{\mathscr{A}(t)}) \partial_t\vartheta \,dx dt.
\end{align}
Inserting the evolution equation~\eqref{eq:evolWeight},
appealing to the product rule in form of $(B\cdot\nabla)\vartheta=\nabla\cdot(\vartheta B)
- \vartheta(\nabla\cdot B)$,  recalling the definition~\eqref{eq:bulkError},
and finally integrating by parts and using $\rho=0$ on $\partial \mathscr{A}(t)$ shows
\begin{align}
\nonumber
\sigma\int_{0}^{T} \int_{\R^d} (\chi_{A(t)} {-} \chi_{\mathscr{A}(t)}) \partial_t\vartheta \,dx dt
&= - \sigma\int_{0}^{T} \int_{\R^d} (\chi_{A(t)} {-} \chi_{\mathscr{A}(t)}) \nabla\cdot(\vartheta B) \,dx dt
\\&~~~ \nonumber
+ \sigma\int_{0}^{T} \int_{\R^d} (\chi_{A(t)} {-} \chi_{\mathscr{A}(t)}) \vartheta(\nabla\cdot B) \,dx dt
\\& \label{eq:stabilityEstimateAux14}
\leq \sigma\int_{0}^{T} \int_{\p^*A(t)} (n\cdot B)\vartheta \,d\H^{d-1} dt
+ C \int_{0}^{T} E_{\mathrm{bulk}}[\chi|\mathscr{A}](t) \,dt.
\end{align}
Next, by adding zero three times as well as making use of the identity~\eqref{eq:RadonNikodymProperty},
Young's inequality and the coercivity properties~\eqref{eq:coercivity0}, \eqref{eq:bulkError}, \eqref{eq:coercivity2}
and \eqref{eq:coercivity5}--\eqref{eq:coercivity7} we may further estimate
\begin{align}
\nonumber
&- \int_{0}^{T} V\vartheta \,d\omega_t dt
+ \sigma\int_{0}^{T} \int_{\p^*A(t)} (n\cdot B)\vartheta \,d\H^{d-1} dt
\\& \nonumber
= - \int_{0}^{T} V\vartheta \,d\omega_t dt
+ \sigma\int_{0}^{T} \int_{\p^*A(t)} \big((n{-}\xi)\cdot B\big)\vartheta \,d\H^{d-1} dt
\\&~~~ \nonumber
+ \sigma\int_{0}^{T} \int_{\p^*A(t)} \big((\xi\cdot B){+}(\nabla\cdot\xi)\big)\vartheta \,d\H^{d-1} dt
\\&~~~ \nonumber
- \int_{0}^{T} \int_{\p^*A(t)} (\rho{-}1)(\nabla\cdot\xi)\vartheta \,d\omega_t dt
- \int_{0}^{T} \int_{\p^*A(t)} (\nabla\cdot\xi)\vartheta \,d\omega_t dt
\\& \label{eq:stabilityEstimateAux15}
\leq \frac{1}{4}\int_{0}^{T} \int_{\R^d} \big|(\nabla\cdot\xi) {+} V\big|^2 \,d\omega_t dt
+ C \int_{0}^{T} E[\mu,\chi|\mathscr{A}](t) \,dt.
\end{align}
Hence, the bounds~\eqref{eq:stabilityEstimateAux13}--\eqref{eq:stabilityEstimateAux15}
in total entail
\begin{align}
\label{eq:stabilityEstimateAux16}
E_{\mathrm{bulk}}[\chi|\mathscr{A}](T)
&\leq E_{\mathrm{bulk}}[\chi_0|\mathscr{A}(0)]
+ \frac{1}{4}\int_{0}^{T} \int_{\R^d}\big|(\nabla\cdot\xi) {+} V\big|^2 \,d\omega_t dt
\\&~~~ \nonumber
+ C \int_{0}^{T} E_{\mathrm{bulk}}[\chi|\mathscr{A}](t) + E[\mu,\chi|\mathscr{A}](t) \,dt.
\end{align}
We thus infer~\eqref{eq:stabilityBulkError} from
adding~\eqref{eq:stabilityEstimateAux16} to~\eqref{eq:stabilityEstimateAux12}.

\textit{Step 3: Proof of the weak-strong uniqueness principle~\emph{\eqref{eq:weakStrong1}--\eqref{eq:weakStrong2}}.}
This now follows immediately from the previous two steps.
\end{proof}

\section{Proof of Lemma \ref{lemma:discussion De Giorgi solution}}
\label{sec:proofLemma}

	\step{Argument for Item (\ref{item:classical is weak}):} Only the
	validity of the De~Giorgi type inequality~\eqref{eq:DeGiorgiInequality}
	deserves a comment, while the other requirements are immediate consequences
	of the smoothness of the underlying evolution and the definition of~$(\mu,\chi)$.
	With respect to~\eqref{eq:DeGiorgiInequality}, we note that it is sufficient
	to establish for a.e.\ $T \in (0,\infty)$
	\begin{align*}
	\sigma\mathcal{H}^{d-1}(\partial\mathscr{A}(T)) + 
	\sigma \int_0^T \int_{\R^d} Vn\cdot\mathbf{H} \,d\mathcal{H}^{d-1} dt
	&\leq \sigma\mathcal{H}^{d-1}(\partial\mathscr{A}(0)).
	\end{align*}
	Indeed, since we know by assumption that
	$V = n\cdot\mathbf{H}$ throughout $\R^d {\times} (0,\infty)$,
	the De~Giorgi type inequality~\eqref{eq:DeGiorgiInequality}
	is easily seen to be equivalent to the previous display.
	On the other side, in the setting considered here the inequality of the previous display holds true
	even as an equality which in turn is a well-known property for smooth evolutions.
	
	\step{Argument for Item (\ref{item:weak is classical}):} Since $(\partial\mathscr{A}(t))_{t \in [0,T)}$
	evolves smoothly, we have again for all $T' \in (0,T)$
	\begin{align*}
  \sigma\mathcal{H}^{d-1}(\partial\mathscr{A}(T')) + 
	\sigma \int_0^{T'} \int_{\R^d} Vn\cdot\mathbf{H} \,d\mathcal{H}^{d-1} dt
	&= \sigma\mathcal{H}^{d-1}(\partial\mathscr{A}(0)).
	\end{align*}
	On the other hand, by virtue of the assumed representation of the given De~Giorgi type varifold solution,
	\eqref{eq:DeGiorgiInequality} and completing squares, we also have for all $T' \in (0,T)$ 
	\begin{align*}
	  &\sigma\mathcal{H}^{d-1}(\partial\mathscr{A}(T')) + 
	\sigma \int_0^{T'} \int_{\R^d} \frac{1}{2}(V - n\cdot\mathbf{H})^2 \,d\mathcal{H}^{d-1} dt
	+ \sigma \int_0^{T'} \int_{\R^d} Vn\cdot\mathbf{H} \,d\mathcal{H}^{d-1} dt
	\\
	&\leq \sigma\mathcal{H}^{d-1}(\partial\mathscr{A}(0)).
	\end{align*}
	Subtracting the former display from the latter, 
	we obtain as desired that $V = n\cdot\mathbf{H}$ holds true throughout $\R^d {\times} (0,T)$.
	
	\step{Argument for Item (\ref{item:structure}):}
	The rectifiability of $\mu_t$ for a.e.~$t$ follows from the fact that \eqref{eq:DeGiorgiInequality} 
	implies $\int |\mathbf{H}(\cdot,t)|^2 \,d\omega_t <\infty$ for a.e.~$t$ and Allard's rectifiability criterion \cite{allard}, see also \cite{DePhilippis} for an alternative proof.
	
	Now we show~\eqref{eq:lemma cont vol}. Taking the difference of \eqref{eq:evolPhase} 
	with $T=t$ and $T=s$ for cylindrical test functions $(x,t)\mapsto\eta(t)\zeta(x)$,
	where $\zeta\in C^\infty_c(\R^d;[-1,1])$ and $\eta\in C^\infty_c([0,\infty);[0,1])$
	such that $\eta \equiv 1$ on $[s,t]$, and applying Cauchy--Schwarz yields
	\begin{align*}
	\sigma\int_{A(t) \Delta A(s)} \zeta(x) \,dx
	&= -\int_{s}^{t} \int_{\R^d} V(x,t)\zeta(x) \,d\omega_tdt
	\\& \leq \left( \int_{\R^d\times(0,\infty)} V^2 \, d\omega\right)^\frac12 
	\left( \int_{\R^d\times(s,t)} 1 \, d\omega\right)^\frac12.
	\end{align*}
	The first right-hand side integral is bounded by $2\omega_0(\R^d)$ while the second one is bounded by $(t-s)\omega_0(\R^d)$;
	both due to~\eqref{eq:DeGiorgiInequality}. Taking the supremum over~$\zeta$ thus entails~\eqref{eq:lemma cont vol}.
	
	\step{Argument for Item (\ref{item:compact sol space}):} 
	We again only focus on establishing~\eqref{eq:DeGiorgiInequality};
	the remaining conditions in fact essentially follow in the process.
	
	Define $\omega_k = \left(\mathcal{L}^1\llcorner (0,\infty)\right) \otimes ((\omega_k)_t)_{t \in (0,\infty)} $. Up to choosing
	a suitable subsequence, the sequence $\omega_k \in \mathcal{M}(\R^d{\times}(0,\infty))$
	converges weakly-$*$ to some $\omega \in \mathcal{M}(\R^d{\times}(0,\infty))$ in the sense of Radon measures.
	Recall that by assumption, for all $k$, the map $t\mapsto (\omega_k)_t(\R^d)$ is non-increasing, and 
	that the initial energy $(\omega_k)_0(\R^d)$ is uniformly bounded in $k\in \N$.
	Hence, a close inspection of the proof of Lemma~\ref{lemma:energy lsc} shows that
	$\omega=\left(\mathcal{L}^1\llcorner (0,\infty)\right) \otimes (\omega_t)_{t \in (0,\infty)} $, 
	where $(\omega_t)_{t \in (0,\infty)}$ is a weakly-$\ast$ $\mathcal{L}^1$-measurable family of
	finite Radon measures, and for a.e.\ $t \in (0,\infty)$ it moreover holds
		\begin{align}\label{eq:Auxcomp mass at t}
		\liminf_{k\downarrow0}(\omega_k)_t(\R^d) \geq \omega_t(\R^d).
	\end{align}
	As in the proof of Proposition \ref{prop:Vsquared}, there exists $V \in L^2((0,\infty);L^2(\R^d,d\omega_t))$
	arising as the Radon--Nikod\'ym derivative of $\partial_t\chi$ with respect to~$\omega$
	(in particular, the equation~\eqref{eq:evolPhase} follows)
	such that in addition for a.e.\ $T \in (0,\infty)$ 
	\begin{align}
			\liminf_{k\downarrow0} \frac12 
		\int_{\R^d {\times} (0,T)} 
		V_k^2 \,d\omega_k
		\geq \frac12  \int_{\R^d {\times} (0,T)}  V^2 \,d\omega.
	\end{align}
	By a straightforward argument solely based on the Riesz representation theorem and 
	the subsequential weak-$*$ convergence of $\mu_k$ to some $\mu$
	(with~$\omega$ being the associated mass measure) in the sense of Radon measures
	in $\R^d\times (0,\infty)\times \S^{d-1}$, there exists 
	$\mathbf{H} \in L^2((0,\infty);L^2(\R^d,d\omega_t;\R^d))$
	such that~\eqref{eq:weakCurvature} holds true and for a.e.\ $T \in (0,\infty)$, we have
	\begin{align}
			\liminf_{k\downarrow0} \frac12 
		\int_{\R^d {\times} (0,T)} 
		\mathbf{H}_k^2 \,d\omega_k
		\geq \frac12  \int_{\R^d {\times} (0,T)}  \mathbf{H}^2 \,d\omega.
	\end{align}
	All in all, in view of the previous three displays together with the validity of~\eqref{eq:DeGiorgiInequality}
	in terms of the sequence~$(\mu_k,\chi_k)$, we may deduce for a.e.\ $T \in (0,\infty)$ 
	that~\eqref{eq:DeGiorgiInequality} holds true in terms of~$(\mu,\chi)$ as desired.
	\qed


\section{A proposal in the setting of multi-phase mean curvature flow}
\label{sec:multi-phase}
In this section, we propose a varifold solution concept \`a la De~Giorgi
for multi-phase mean curvature flow. We only consider the case of equal
surface tensions (which we consequently normalize to the unit) 
meaning that at triple junctions the interfaces intersect
at equal angles of~$120^\circ$. When reduced to the two-phase setting,
our proposal turns out to be slightly stronger than what is required
in Definition~\ref{def:twoPhaseDeGiorgiVarifoldSolutions} in the sense
that, for almost every $t \in (0,T)$ and $\omega_t$ almost every $x \in \R^d$, 
the generalized mean curvature vector $\mathbf{H}(x,t)$ of~\eqref{eq:weakCurvature} points in the 
direction of the expected value $\langle\lambda_{x,t}\rangle := \smash{\int_{\S^{d-1}}p\,d\lambda_{x,t}}$ (if the latter does not vanish; see \eqref{eq:weakCurvatureDirection} below). 
The merit of this additional assumption is that it allows to close the Gronwall argument
from the previous section without requiring the velocity~$B$ to point 
in normal direction (cf.\ the proof of Theorem~\ref{theo:weakStrongUniquenessMultiPhase} below). 
This in turn is of course essential for the multi-phase case since~$B$ will necessarily contain tangential components due to the
presence of moving triple junctions.
Note that this additional assumption on $\mathbf{H}$ is a direct consequence of  Brakke's orthogonality theorem \cite[Chapter 5]{brakke} in the two-phase case since we have shown in Theorem \ref{theo:Existence} that the varifolds $\frac1\sigma \mu_t$ in our construction have integer multiplicity for a.e.~$t$.

\begin{definition}[De~Giorgi type varifold solutions for multi-phase mean curvature flow]
	\label{def:multiPhaseDeGiorgiVarifoldSolutions}
	Let $P\geq 2$ be the number of phases.
	For each pair of phases $i,j\in\{1,\ldots,P\}$, let $\mu^{i,j} = \mathcal{L}^1\otimes (\mu^{i,j}_t)_{t\in (0,\infty)}$
	be a family of oriented varifolds $\mu^{i,j}_t \in \mathcal{M}(\R^d{\times}\S^{d-1})$, $t\in (0,\infty)$,
	such that the map $(0,\infty) \ni t \mapsto \int_{\R^d{\times}\S^{d-1}} \eta(\cdot,\cdot,t) \,d\mu^{i,j}_t$
	is measurable for all test functions $\eta \in L^1((0,\infty);C_0(\R^d{\times}\S^{d-1}))$.
	Define evolving oriented varifolds $\mu^i=\mathcal{L}^1\otimes (\mu^i_t)_{t\in(0,\infty)}$, $i\in\{1,\ldots,P\}$,
	and $\mu=\mathcal{L}^1\otimes (\mu_t)_{t\in(0,\infty)}$ by means of
	\begin{subequations}
		\begin{align}
		\label{eq:varifoldGrain}
		\mu^i_t &:= 2\mu^{i,i}_t + \sum_{j=1,\,j\neq i}^P \mu^{j,i}_t,
		&& t \in (0,\infty),
		\\
		\label{eq:varifoldNetwork}
		\mu_t &:= \frac{1}{2}\sum_{i=1}^P\mu^i_t,
		&& t \in (0,\infty).
		\end{align}
		The disintegration of $\mu^{i,j}_t$ is expressed in form of $\mu^{i,j}_t = 
		\omega^{i,j}_t \otimes \smash{(\lambda^{i,j}_{x,t})_{x\in\R^d}}$ with expected
		value $\langle\lambda^{i,j}_{x,t}\rangle:=\smash{\int_{\S^{d-1}}p\,d\lambda^{i,j}_{x,t}}(p)$
		for all $i,j\in\{1,\ldots,P\}$ and all $t\in(0,\infty)$. Analogous expressions
		are introduced for the disintegrations of $\mu^{i}_t$ and $\mu_t$, respectively.
		
		Consider also a family $A=(A^{1},\ldots,A^{P})$ such that
		for each phase $i\in\{1,\ldots,P\}$ we have a family $A^{i}=(A^{i}(t))_{t \in (0,\infty)}$ of open subsets of~$\R^d$
		with finite perimeter in~$\R^d$. We also require $(A^1(t),\ldots,A^P(t))$ to be a partition of~$\R^d$ for all $t\in(0,\infty)$
		and that all except for the $P$-th phase have finite mass for all $t\in(0,\infty)$.
		Moreover, for each $i\in\{1,\ldots,P\}$ the associated indicator function~$\chi^{i}(\cdot,t):=\chi_{A^{i}(t)}$,
		$t \in (0,\infty)$, satisfies $\chi^{i} \in L^\infty((0,\infty);\mathrm{BV}_{\mathrm{loc}}(\R^d;\{0,1\}))$.
		We shortly write $\chi=(\chi^1,\ldots,\chi^P)$.
		
		Given initial data~$((\mu^{i,j}_0)_{i,j\in\{1,\ldots,P\}},(\chi^i_0)_{i\in\{1,\ldots,P\}})$
		of the above form, we call the pair~$(\mu,\chi)$ a \emph{De~Giorgi type varifold solution for multi-phase
			mean curvature flow with initial data~$(\mu_0,\chi_0)$} if the following 
		requirements hold true.
		\begin{itemize}
			\item (Existence of normal speeds) For each phase $i\in\{1,\ldots,P\}$,
			there exists a normal speed $V^{i} \in L^2((0,\infty);L^2(\R^d,d\omega^i_t))$ in the sense that
			\begin{align}
			\label{eq:evolMultiPhase}
			\int_{\R^d} \chi^i(\cdot,T)\zeta(\cdot,T) \,dx - 
			\int_{\R^d} \chi^i_0\zeta(\cdot,0) \,dx 
			= \int_{0}^{T} \int_{\R^d} \chi^i \partial_t\zeta \,dx dt
			- \int_{0}^{T} \int_{\R^d} V^i \zeta \,d\omega^i_t dt
			\end{align}
			for almost every $T \in (0,\infty)$ and all $\zeta \in C^\infty_{c}(\R^d {\times} [0,\infty))$.
			\item (Existence of a generalized mean curvature vector) There exists
			a generalized mean curvature vector
			$\mathbf{H} \in L^2((0,\infty);L^2(\R^d,d\omega_t;\R^d))$ 
			in the sense that 
			\begin{align}
			\label{eq:weakCurvatureMultiPhase}
			\int_{0}^{T} \int_{\R^d} \mathbf{H} \cdot B \,d\omega_t dt
			= - \int_{0}^{T} \int_{\R^d {\times} \S^{d-1}}
			(I_d {-} p \otimes p) : \nabla B \,d\mu_t dt
			\end{align}
			for almost every $T \in (0,\infty)$ and all $B \in C^\infty_{c}(\R^d {\times} [0,\infty);\R^d)$.
			\item (Sharp energy dissipation principle \`a la De~Giorgi) It holds
			\begin{align}
			\label{eq:DeGiorgiInequalityMultiPhase}
			\int_{\R^d} 1 \,d\omega_{T}
			+ \frac{1}{2} \sum_{i=1}^P \int_{0}^{T} \int_{\R^d} \frac{1}{2}|V^i|^2 \,d\omega^i_t dt
			+ \frac{1}{2} \int_{0}^{T} \int_{\R^d} |\mathbf{H}|^2 \,d\omega_t dt
			\leq \int_{\R^d} 1 \,d\omega_{0}
			\end{align}
			for almost every $T \in (0,\infty)$.
			\item (Compatibility conditions) For all $i,j\in\{1,\ldots,P\}$ with $i\neq j$ we require
			\begin{align}
				\label{eq:compNormalSpeeds1}
				\omega_t^{i,j} = \omega_t^{j,i}
			\end{align}
			for almost every $t\in (0,\infty)$ and
			\begin{align}
			\label{eq:compNormals}
			\langle\lambda^{i,j}_{x,t}\rangle &= - \langle\lambda^{j,i}_{x,t}\rangle,
			\quad \langle\lambda^{i,i}_{x,t}\rangle = 0,
			\\
			\label{eq:compNormalSpeeds2}
			V^i(x,t)  &= -V^j(x,t),
			\\
			\label{eq:weakCurvatureDirection}
			|\langle\lambda^{i,j}_{x,t}\rangle|^2 \mathbf{H}(x,t)
			&= \big(\mathbf{H}(x,t) \cdot \langle\lambda^{i,j}_{x,t}\rangle\big) \langle\lambda^{i,j}_{x,t}\rangle
			\end{align}
			for almost every $t\in(0,\infty)$ and $\omega^{i,j}_t$ almost every $x\in\R^d$.
			Finally, for all $i\in\{1,\ldots,P\}$
			\begin{align}
			\label{eq:compatibilityMultiPhase}
			\int_{\R^d} \xi \cdot \,d\nabla\chi^i(\cdot,t) 
			= \int_{\R^d {\times} \S^{d-1}} \xi \cdot p \,d\mu^i_t
			\end{align}
			for almost every $t \in (0,\infty)$ and all $\xi \in C^\infty_{c}(\R^d)$.
		\end{itemize}
	\end{subequations}
\end{definition}

Analogous to the two-phase case, the condition~\eqref{eq:compatibilityMultiPhase}
ensures that $|\nabla\chi^i(\cdot,t)|\leq \omega^{i}_t$ in the sense of
measures for almost every $t\in(0,\infty)$ and all $i\in\{1,\ldots,P\}$, and
that the associated Radon--Nikod\'ym derivative satisfies
\begin{align}
\label{eq:inverseMultiplicityMultiPhase}
\rho^i(x,t) &:= \frac{d|\nabla\chi^i(\cdot,t)|}{d\omega^{i}_t}(x) \in [0,1] 
&&\text{ for } \omega^i_t \text{ almost every } x \in \R^d,
\\ \label{eq:RadonNikodymPropertyMultiPhase}
\int_{\p^*A^{i}(t)} f \,d\H^{d-1} &= \int_{\R^d{\times}\S^{d-1}}
f\rho^{i}(\cdot,t) \,d\omega^i_t
&&\text{ for all } f \in C^\infty_{c}(\R^d).
\end{align}

At the time of this writing, we admittedly do not know of an existence proof for such
a solution concept which does not require an energy convergence assumption as in \cite{LucStu, Laux-Otto, Laux-Simon}.
Moreover, the extension to general classes of surface tension matrices
does not seem to be immediate. 
However, what we will argue in this section is that the above concept 
of varifold solutions for multi-phase mean curvature flow is at least somehow reasonable
in the sense that it is subject to a weak-strong uniqueness principle;
a computation which we wanted to share with the community.
To this end, we first recall the recently introduced concept of a gradient-flow
calibration (with respect to the $L^2$ gradient-flow of the interface energy).

The underlying data is given by a family $\mathscr{A}=(\mathscr{A}^{1},\ldots,\mathscr{A}^{P})$ 
such that for each phase $i\in\{1,\ldots,P\}$ we have a family 
$\mathscr{A}^{i}=(\mathscr{A}^{i}(t))_{t \in [0,\Tcal]}$ of non-empty open subsets of~$\R^d$
with finite perimeter in~$\R^d$. We also require $(\mathscr{A}^1(t),\ldots,\mathscr{A}^P(t))$ 
to be a partition of~$\R^d$ for all $t\in[0,\Tcal]$, that all except for the $P$th phase have 
finite mass for all $t\in[0,\Tcal]$, and that $\p\mathscr{A}^{i}(t)=\overline{\p^*\mathscr{A}^{i}(t)}$
as well as $\H^{d-1}(\p\mathscr{A}^{i}(t)\setminus\p^*\mathscr{A}^{i}(t))=0$
for all $i\in\{1,\ldots,P\}$ and all $t\in[0,\Tcal]$. Finally, we write $\mathcal{I}_{i,j}(t)$
for the associated interfaces $\p^*\mathscr{A}^{i}(t)\cap\p^*\mathscr{A}^{j}(t)$
between phases $i\neq j\in\{1,\ldots,P\}$ at time~$t\in[0,\Tcal]$, and $n_{\mathcal{I}_{i,j}(t)}$
for the corresponding unit normal vector field pointing from~$\mathscr{A}^{i}(t)$ to~$\mathscr{A}^{j}(t)$.
We then assume that~$\mathscr{A}$ is a calibrated evolution in the
precise sense of \cite[Definition~2]{Fischer-Hensel-Laux-Simon}. In particular,
there exists a family of sufficiently regular and sufficiently integrable vector fields
$((\xi_{i})_{i\in\{1,\ldots,P\}},(\xi_{i,j})_{i,j\in\{1,\ldots,P\},i\neq j},B)$ so that
\begin{align}
\label{eq:calibrationMultiPhase}
\xi_{i,j}(\cdot,t) &= \xi_{i}(\cdot,t) - \xi_{j}(\cdot,t),
\\
\label{eq:extensionMultiPhase}
\xi_{i,j}(\cdot,t) &= n_{\mathcal{I}_{i,j}(t)} \text{ along } \mathcal{I}_{i,j}(t),
\\ \label{eq:lengthControlMultiPhase}
c\min\{1,\mathrm{dist}^2(x,\mathcal{I}_{i,j}(t))\} &\leq 1 - |\xi_{i,j}(x,t)|,
\\ \label{eq:transportXiMultiPhase}
\partial_t\xi_{i,j} + (B\cdot\nabla)\xi_{i,j} + (\nabla B)^\mathsf{T}\xi_{i,j}
&= O\big(\min\{1,\dist(x,\mathcal{I}_{i,j}(t))\}\big),
\\ \label{eq:transportLengthXiMultiPhase}
\xi\cdot\big(\partial_t\xi_{i,j} + (B\cdot\nabla)\xi_{i,j}\big)
&= O\big(\min\{1,\mathrm{dist}^2(x,\mathcal{I}_{i,j}(t))\}\big),
\\ \label{eq:motionByMeanCurvatureMultiPhase}
B\cdot\xi_{i,j} + \nabla\cdot\xi_{i,j} &= O\big(\min\{1,\dist(x,\mathcal{I}(t))\}\big),
\end{align}
for all $(x,t) \in \R^d{\times}[0,\Tcal]$ and some $c\in (0,1]$.
Moreover, we assume that there exists a family of sufficiently regular
and sufficiently integrable weights~$(\vartheta^1,\ldots,\vartheta^P)$ such that
(cf.\ \cite[Definition~4]{Fischer-Hensel-Laux-Simon})
\begin{align}
\label{eq:signCondWeightMultiPhase}
\vartheta^{i}(\cdot,t) < 0 \text{ in } \mathscr{A}^{i}(t),
\quad \vartheta^{i}(\cdot,t) &> 0 \text{ in } \R^d\setminus\overline{\mathscr{A}^{i}(t)},
\\
\label{eq:evolWeightMultiPhase}
\big(\partial_t\vartheta^{i} + (B\cdot\nabla)\vartheta^{i}\big)(x,t) &= 
O\big(\min\{1,\dist(x,\p\mathscr{A}^{i}(t))\}\big),
\\
\label{eq:coercivity0MultiPhase}
c\min\{1,\dist(x,\p\mathscr{A}^{i}(t))\} &\leq |\vartheta^{i}(x,t)|
\leq C\min\{1,\dist(x,\p\mathscr{A}^{i}(t))\}
\end{align}
for some $c\in (0,1]$, $C\geq 1$ and all $(x,t) \in \R^d{\times}[0,\Tcal]$.

It turns out that sufficiently smooth solutions to multi-phase mean curvature
flow are indeed calibrated in this sense, see~\cite[Proposition~6 and Lemma~7]{Fischer-Hensel-Laux-Simon}
for the regime $d=2$ and $P\geq 2$, or~\cite[Theorem~1]{Hensel2021} for the
case $d=3$ and $P=3$. For the purposes of the present work, it turns out that
one needs one additional requirement in form of
\begin{align}
\label{eq:symmetryGradientB}
\big|\big(\xi_{i,j}\otimes(I_d{-}\xi_{i,j}\otimes\xi_{i,j})v + 
(I_d{-}\xi_{i,j}\otimes\xi_{i,j})v\otimes\xi_{i,j}\big):\nabla B\big|
&= O\big(|v|\min\{1,\mathrm{dist}(x,\mathcal{I}_{i,j}(t))\}\big)
\end{align}
for all $(x,t) \in \R^d{\times}[0,\Tcal]$ and all $v\in\R^d$.
The above existence proofs only require a minor extension
to include this additional condition.

We now define the multi-phase analogues of the relative entropy
and the bulk error from~\eqref{eq:relativeEntropy} and~\eqref{eq:bulkError},
respectively, measuring the difference between a De~Giorgi type varifold solution~$(\mu,\chi)$
of multi-phase mean curvature flow and a calibrated evolution~$\mathscr{A}$.
These are defined by
\begin{align}
\label{eq:relativeEntropyMultiPhase}
E[\mu,\chi|\mathscr{A}](t) &:= \int_{\R^d} 1 \,d\omega_t 
- \frac{1}{2}\sum_{i,j=1,\,i\neq j}^{P} \int_{\R^d {\times} \S^{d-1}} 
p \cdot \xi_{i,j}(\cdot,t) \,d\mu^{i,j}_t \geq 0, 
&& t \in [0,\Tcal],
\\ \label{eq:bulkErrorMultiPhase}
E_{\mathrm{bulk}}[\chi|\mathscr{A}](t) &:= \sum_{i=1}^P\int_{\R^d} 
|\chi_{A^{i}(t)} {-} \chi_{\mathscr{A}^{i}(t)}| \, |\vartheta^{i}(\cdot,t)| \,dx, 
&& t \in [0,\Tcal].
\end{align}
Note that by~\eqref{eq:varifoldGrain}--\eqref{eq:varifoldNetwork}
we may equivalently write
\begin{align}
\label{eq:relativeEntropy2MultiPhase}
E[\mu,\chi|\mathscr{A}](t) = \sum_{i=1}^{P} \int_{\R^d} 1 \,d\omega_{t}^{i,i}
+ \frac{1}{2}\sum_{i,j=1,\,i\neq j}^{P} \int_{\R^d {\times} \S^{d-1}} 
1 - p \cdot \xi_{i,j}(\cdot,t) \,d\mu^{i,j}_t.
\end{align}
By the calibration property~\eqref{eq:calibrationMultiPhase},
the anti-symmetry condition~\eqref{eq:compNormals}, the
decomposition~\eqref{eq:varifoldGrain}, and the compatibility condition~\eqref{eq:compatibilityMultiPhase}
we have 
\begin{align*}
&- \frac{1}{2}\sum_{i,j=1,\,i\neq j}^{P} \int_{\R^d {\times} \S^{d-1}}
p \cdot \xi_{i,j}(\cdot,t) \,d\mu^{i,j}_t
\\&
=  \sum_{i,j=1}^{P} \int_{\R^d {\times} \S^{d-1}}
p \cdot \xi_{i}(\cdot,t) \,d\mu^{j,i}_t
=  \sum_{i=1}^{P} \int_{\R^d {\times} \S^{d-1}}
p \cdot \xi_{i}(\cdot,t) \,d\mu^{i}_t
= \sum_{i=1}^{P} \int_{\R^d} (\xi_{i} \cdot \,d\nabla\chi^{i})(\cdot,t)
\end{align*}
so that by an integration by parts 
\begin{align}
\label{eq:repRelEntropyTimeEvolMultiPhase}
- \frac{1}{2}\sum_{i,j=1,\,i\neq j}^{P} \int_{\R^d {\times} \S^{d-1}}
p \cdot \xi_{i,j}(\cdot,t) \,d\mu^{i,j}_t
&= - \sum_{i=1}^{P} \int_{\R^d} \chi^i(\cdot,t)(\nabla\cdot\xi_i)(\cdot,t) \,dx.
\end{align}
Alternatively, writing $n_{i,j}(\cdot,t) := \frac{\nabla\chi_j(\cdot,t)}{|\nabla\chi_j(\cdot,t)|}
= - \frac{\nabla\chi_i(\cdot,t)}{|\nabla\chi_i(\cdot,t)|}$ along $\p^*A^{i}(t)\cap\p^*A^{j}(t)$
and capitalizing on the anti-symmetry $n_{i,j}=-n_{j,i}$ as well as the
calibration property~\eqref{eq:calibrationMultiPhase}, we also have
\begin{align}
\label{eq:repRelEntropyAux}
- \frac{1}{2}\sum_{i,j=1,\,i\neq j}^{P} \int_{\R^d {\times} \S^{d-1}}
p \cdot \xi_{i,j}(\cdot,t) \,d\mu^{i,j}_t
&= - \frac{1}{2}\sum_{i,j=1,\,i\neq j}^{P} 
\int_{\p^*A^{i}(t)\cap\p^*A^{j}(t)} 
(n_{i,j}\cdot\xi_{i,j})(\cdot,t) \,d\H^{d-1}.
\end{align}
Since also by~\eqref{eq:varifoldNetwork} and~\eqref{eq:RadonNikodymPropertyMultiPhase}
\begin{align}
\label{eq:altRepVarifoldEnergy}
\int_{\R^d} 1 \,d\omega_t &= 
\frac{1}{2}\sum_{i=1}^{P} \int_{\R^d} 1 - \rho^{i}(\cdot,t) \,d\omega^{i}_t
+ \frac{1}{2}\sum_{i,j=1,\,i\neq j}^{P} \int_{\p^*A^{i}(t)\cap\p^*A^{j}(t)} 1 \,d\H^{d-1},
\end{align}
a third representation of the relative entropy is therefore given by
\begin{align}
\label{eq:relativeEntropy3MultiPhase}
E[\mu,\chi|\mathscr{A}](t) = \frac{1}{2}\sum_{i=1}^{P} \int_{\R^d} 1 - \rho^{i}(\cdot,t) \,d\omega^{i}_t
+ \frac{1}{2}\sum_{i,j=1,\,i\neq j}^{P} \int_{\p^*A^{i}(t)\cap\p^*A^{j}(t)} 
(1 - n_{i,j}\cdot\xi_{i,j})(\cdot,t) \,d\H^{d-1}.
\end{align}

The three equivalent expressions~\eqref{eq:relativeEntropyMultiPhase},
\eqref{eq:relativeEntropy2MultiPhase} and \eqref{eq:relativeEntropy3MultiPhase}
combined with the length constraint~\eqref{eq:lengthControlMultiPhase}
again imply a number of useful coercivity properties for the relative entropy: 
\begin{align}
\label{eq:coercivity1MultiPhase}
\int_{\R^d} 1 \,d\omega_{t}^{i,i} 
+ \int_{\R^d} 1 - \rho^{i}(\cdot,t) \,d\omega^{i}_t
&\leq C E[\mu,\chi|\mathscr{A}](t),
\\ \label{eq:coercivity2MultiPhase}
\int_{\R^d {\times} \S^{d-1}} 1 - |\langle\lambda^{i,j}_{x,t}\rangle| \,d\omega^{i,j}_t
&\leq C E[\mu,\chi|\mathscr{A}](t),
\\ \label{eq:coercivity3MultiPhase}
\int_{\R^d {\times} \S^{d-1}} |p {-} \xi_{i,j}(\cdot,t)|^2 \,d\mu^{i,j}_t
&\leq C E[\mu,\chi|\mathscr{A}](t),
\\ \label{eq:coercivity4MultiPhase}
\int_{\R^d} \min\{1,\mathrm{dist}^2(\cdot,\mathcal{I}_{i,j}(t))\} \,d\omega^{i,j}_t
&\leq C E[\mu,\chi|\mathscr{A}](t),
\\ \label{eq:coercivity5MultiPhase}
\int_{\p^*A^{i}(t)\cap\p^*A^{j}(t)} |(n_{i,j} {-} \xi_{i,j})(\cdot,t)|^2 \,d\H^{d-1}
&\leq C E[\mu,\chi|\mathscr{A}](t),
\\ \label{eq:coercivity6MultiPhase}
\int_{\p^*A^{i}(t)\cap\p^*A^{j}(t)} \min\{1,\mathrm{dist}^2(\cdot,\mathcal{I}_{i,j}(t))\} \,d\H^{d-1}
&\leq C E[\mu,\chi|\mathscr{A}](t)
\end{align}
for all $i\neq j\in\{1,\ldots,P\}$ and all $t\in [0,\Tcal]$. Here $C=C(\mathscr{A})$ depends on the strong solution, but not on the weak solution. 
Finally, by~\eqref{eq:signCondWeightMultiPhase} and~\eqref{eq:coercivity0MultiPhase}
we also have in terms of the multi-phase bulk error
\begin{align}
\label{eq:coercivity7MultiPhase}
\sum_{i=1}^{P} \int_{\R^d} (\chi_{A^i(t)} {-} \chi_{\mathscr{A}^i(t)}) \vartheta^i(\cdot,t) \,dx
&= E_{\mathrm{bulk}}[\chi|\mathscr{A}](t),
\\ \label{eq:coercivity8MultiPhase}
\int_{\R^d} |\chi_{A^i(t)} {-} \chi_{\mathscr{A}^i(t)}|
\min\{1,\mathrm{dist}(\cdot,\p\mathscr{A}^{i}(t))\} \,dx
&\leq C E_{\mathrm{bulk}}[\chi|\mathscr{A}](t)
\end{align}
for all $i\in\{1,\ldots,P\}$ and all $t\in [0,\Tcal]$.

We have everything in place to formulate and prove the main result of this section.

\begin{theorem}[Weak-strong uniqueness and quantitative stability
	for multi-phase De~Giorgi type varifold solutions]
	\label{theo:weakStrongUniquenessMultiPhase}
	Let $\Tcal\in (0,\infty)$ be a finite time horizon,
	and let~$\mathscr{A}$ be a calibrated evolution on $[0,\Tcal]$ in the above sense.
	Furthermore, let $(\mu,\chi)$ be a De~Giorgi type varifold solution
	for multi-phase mean curvature flow with initial data~$(\mu_0,\chi_0)$ in the sense of
	Definition~\ref{def:multiPhaseDeGiorgiVarifoldSolutions}.
	
	Defining the relative entropy~$E[\mu,\chi|\mathscr{A}]$ 
	and the bulk error~$E_{\mathrm{bulk}}[\chi|\mathscr{A}]$ 
	by~\eqref{eq:relativeEntropyMultiPhase} and~\eqref{eq:bulkErrorMultiPhase}, respectively, 
	we then have stability estimates for these quantities in form of
	\begin{align}
	\label{eq:stabilityRelEntropyMultiPhase}
	E[\mu,\chi|\mathscr{A}](T) 
	&\leq E[\mu_0,\chi_0|\mathscr{A}(0)]
	+ C \int_{0}^{T} E[\mu,\chi|\mathscr{A}](t) \,dt, 
	\\ \label{eq:stabilityBulkErrorMultiPhase}
	E_{\mathrm{bulk}}[\chi|\mathscr{A}](T)
	&\leq E_{\mathrm{bulk}}[\chi_0|\mathscr{A}(0)]
	+ E[\mu_0,\chi_0|\mathscr{A}(0)]
	+ \int_{0}^{T} E_{\mathrm{bulk}}[\chi|\mathscr{A}](t)
	{+} E[\mu,\chi|\mathscr{A}](t) \,dt
	\end{align}
	for some constant $C=C(\mathscr{A})>0$
	and almost every $T \in [0,\Tcal]$.
	
	In particular, if $\chi^i_0=\chi_{\mathscr{A}^i(0)}$
	and $\mu^i_0 = |\nabla\chi^i_0|\otimes(\delta_{n^{i}_0(x)})_{x\in\R^d}$
	hold true with $n^{i}_0 = \frac{\nabla\chi^{i}_0}{|\nabla\chi^{i}_0|}$ for all $i\in\{1,\ldots,P\}$, 
	then the De~Giorgi type varifold solution~$(\mu,\chi)$ 
	reduces to the calibrated evolution~$\mathscr{A}$ 
	in the sense that for almost every $t \in [0,\Tcal]$ and all $i\in\{1,\ldots,P\}$
	\begin{align}
	\label{eq:weakStrong1MultiPhase}
	\chi_{A^{i}(t)} &= \chi_{\mathscr{A}^i(t)}
	\text{ almost everywhere in } \R^d,
	\\ \label{eq:weakStrong2MultiPhase}
	\mu^i_t &= |\nabla\chi_{A^{i}(t)}|\otimes(\delta_{n^i(x,t)})_{x\in\R^d},
	\end{align}
	where $n^i(\cdot,t)=\frac{\nabla\chi_{A^{i}(t)}}{|\nabla\chi_{A^{i}(t)}|}$.
\end{theorem}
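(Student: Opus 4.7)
The plan is to follow closely the structure of the two-phase proof of Theorem~\ref{theo:weakStrongUniqueness}, with each of the three distance-measure manipulations lifted phase by phase. I would start from the representation
\begin{align*}
E[\mu,\chi|\mathscr{A}](T) = \int_{\R^d} 1 \,d\omega_T - \sum_{i=1}^{P} \int_{\R^d} \chi^i(\cdot,T)(\nabla\cdot\xi_i)(\cdot,T) \,dx
\end{align*}
obtained from~\eqref{eq:relativeEntropy2MultiPhase} and~\eqref{eq:repRelEntropyTimeEvolMultiPhase}, then insert~\eqref{eq:DeGiorgiInequalityMultiPhase} to bound $\int 1\,d\omega_T$ and the evolution equations~\eqref{eq:evolMultiPhase} (after integration by parts in time) to compute the time integral of $\sum_i \chi^i(\nabla\cdot\xi_i)$. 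This yields a multi-phase analogue of~\eqref{eq:stabilityEstimateAux1} in which the only nontrivial boundary contributions are of the form $-\int_{\p^*A^i \cap \p^*A^j} n_{i,j}\cdot \p_t\xi_{i,j}\,d\H^{d-1}dt$; the anti-symmetry relations~\eqref{eq:compNormals}--\eqref{eq:compNormalSpeeds2} together with~\eqref{eq:compNormalSpeeds1} let one pair the indices $(i,j)$ consistently.

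These boundary terms can then be processed verbatim as in~\eqref{eq:stabilityEstimateAux2}--\eqref{eq:stabilityEstimateAux5}: adding and subtracting the calibration transport combinations provided by~\eqref{eq:transportXiMultiPhase}--\eqref{eq:motionByMeanCurvatureMultiPhase} produces pure $O(E)$ errors via the coercivity estimates~\eqref{eq:coercivity3MultiPhase}--\eqref{eq:coercivity6MultiPhase}, and distills the genuine content, namely pairings of the form $\int p\otimes\xi_{i,j}:\nabla B\,d\mu^{i,j}$ and $\int (n_{i,j}\cdot B)(\nabla\cdot\xi_{i,j})\,d\H^{d-1}$ together with the remaining bulk integrals $\int V^i(\nabla\cdot\xi_i)\,d\omega^i$ and $\int (I_d-p\otimes p):\nabla B\,d\mu$ arising from~\eqref{eq:weakCurvatureMultiPhase}.

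The main obstacle, and the only real departure from the two-phase argument, appears at the analogue of~\eqref{eq:stabilityEstimateAux8}--\eqref{eq:stabilityEstimateAux10}. Now the transport velocity~$B$ of the calibration generically carries tangential components along each interface, so the tangential-$B$ term $|(I_d - \nabla s_{\mathcal{I}_{i,j}}\otimes\nabla s_{\mathcal{I}_{i,j}})B|^2$ no longer vanishes identically, and the final absorption step of the two-phase proof breaks down. Here the two additional hypotheses of this section enter decisively. First, the algebraic symmetry~\eqref{eq:symmetryGradientB} on $\nabla B$ is exactly what is needed to rewrite the bilinear coupling between the tangential components of $p - \xi_{i,j}$ and those of $B$ as a manifestly $O(\dist\cdot|p - \xi_{i,j}|)$ error, which is absorbed via~\eqref{eq:coercivity3MultiPhase}--\eqref{eq:coercivity4MultiPhase}. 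Second, the direction condition~\eqref{eq:weakCurvatureDirection} allows one to replace the integrand $(I_d - p\otimes p):\nabla B$ in~\eqref{eq:weakCurvatureMultiPhase} by its projection onto $\langle\lambda^{i,j}\rangle^{\perp}$-directions; comparing $\langle\lambda^{i,j}\rangle$ to $\xi_{i,j}$ via~\eqref{eq:coercivity2MultiPhase}--\eqref{eq:coercivity3MultiPhase} then reorganizes the would-be obstruction into a sum of non-positive square terms of the schematic form $-|V^i+\nabla\cdot\xi_i|^2$ and $-|\mathbf{H}\cdot\xi_{i,j}- B\cdot\xi_{i,j}|^2$, plus an $O(E)$ remainder. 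Gathering all contributions delivers a Gronwall inequality yielding~\eqref{eq:stabilityRelEntropyMultiPhase}.

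For the bulk error, one proceeds in direct parallel to~\eqref{eq:stabilityEstimateAux13}--\eqref{eq:stabilityEstimateAux16}, differentiating $\sum_i \int |\chi^i - \chi_{\mathscr{A}^i}||\vartheta^i|\,dx$ via~\eqref{eq:evolMultiPhase}, inserting~\eqref{eq:evolWeightMultiPhase} whose $O(\dist)$ error is absorbed by~\eqref{eq:coercivity8MultiPhase}, and using Young's inequality to absorb a term of the form $\frac{1}{4}|V^i+\nabla\cdot\xi_i|^2$ into the slack produced by the relative-entropy estimate of Step~3. Summing the two stability bounds and applying Gronwall gives~\eqref{eq:stabilityRelEntropyMultiPhase}--\eqref{eq:stabilityBulkErrorMultiPhase}. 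Well-prepared initial data then force $E[\mu,\chi|\mathscr{A}]\equiv 0$ and $E_{\mathrm{bulk}}[\chi|\mathscr{A}]\equiv 0$, and the coercivity properties~\eqref{eq:coercivity1MultiPhase}--\eqref{eq:coercivity3MultiPhase} together with~\eqref{eq:coercivity8MultiPhase} upgrade this to the pointwise-in-time identifications~\eqref{eq:weakStrong1MultiPhase}--\eqref{eq:weakStrong2MultiPhase}.
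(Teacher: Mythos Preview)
Your proposal is correct and follows essentially the same route as the paper's proof: start from the representation~\eqref{eq:repRelEntropyTimeEvolMultiPhase}, insert~\eqref{eq:DeGiorgiInequalityMultiPhase} and~\eqref{eq:evolMultiPhase}, process the $\partial_t\xi_{i,j}$ terms in parallel to the two-phase case, and then use the two additional hypotheses~\eqref{eq:symmetryGradientB} and~\eqref{eq:weakCurvatureDirection} to close the Gronwall argument despite the tangential components of~$B$. The only noteworthy organizational difference is that the paper invokes~\eqref{eq:symmetryGradientB} slightly earlier than you suggest---not at the square-completion stage analogous to~\eqref{eq:stabilityEstimateAux8}--\eqref{eq:stabilityEstimateAux10}, but already when converting the surface term $\int_{S_{i,j}}\xi_{i,j}\otimes\xi_{i,j}:\nabla B\,d\H^{d-1}$ into its varifold counterpart $\int\xi_{i,j}\otimes\xi_{i,j}:\nabla B\,d\mu^{i,j}$ (this conversion is needed because, unlike $n_{i,j}\otimes\xi_{i,j}$, the term $\xi_{i,j}\otimes\xi_{i,j}$ has no $p$-factor and cannot be transferred via~\eqref{eq:compatibilityMultiPhase} alone); the algebraic identity~\eqref{eq:symmetryGradientB} is precisely what makes the symmetrized tangential error $O(\dist)\cdot|n_{i,j}-\xi_{i,j}|$ absorbable.
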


\begin{proof}
	We again split the proof into three steps. Naturally, the argument is of similar structure
	as the one for the proof of Theorem~\ref{theo:weakStrongUniqueness}. We thus focus
	in the following on the key differences.
	
	\textit{Step 1: Proof of~\emph{\eqref{eq:stabilityRelEntropyMultiPhase}}.}
	The first major goal is to arrive at the multi-phase analogue of the estimate~\eqref{eq:stabilityEstimateAux6}.
	Starting point for this is to rewrite the relative entropy
	thanks to~\eqref{eq:repRelEntropyTimeEvolMultiPhase} as
	\begin{align*}
	E[\mu,\chi|\mathscr{A}](T) 
	= \int_{\R^d} 1 \,d\omega_{T}  - \sum_{i=1}^{P} \int_{\R^d} \chi^i(\cdot,T)(\nabla\cdot\xi_i)(\cdot,T) \,dx,
	\end{align*}
	so that the De~Giorgi type inequality~\eqref{eq:DeGiorgiInequalityMultiPhase},
	the evolution equations~\eqref{eq:evolMultiPhase}, and an integration by parts imply
	\begin{align}
	\label{eq:auxMultiPhase1}
	E[\mu,\chi|\mathscr{A}](T) &\leq E[\mu_0,\chi_0|\mathscr{A}(0)]
	- \frac{1}{2} \sum_{i=1}^P \int_{0}^{T} \int_{\R^d} \frac{1}{2}|V^i|^2 \,d\omega^i_t dt
	- \frac{1}{2} \int_{0}^{T} \int_{\R^d} |\mathbf{H}|^2 \,d\omega_t dt
	\\&~~~ \nonumber
	+ \sum_{i=1}^P \int_{0}^{T} \int_{\R^d} V^i (\nabla\cdot\xi_i) \,d\omega^i_t dt
	+ \sum_{i=1}^P \int_{0}^{T} \int_{\R^d} \frac{\nabla\chi_i}{|\nabla\chi_i|} 
	\cdot \partial_t\xi_{i} \,d|\nabla\chi_i| dt.
	\end{align}
	Splitting first $|\nabla\chi_i(\cdot,t)| = \sum_{j=1,j\neq i}^{P}\H^{d-1}
	\llcorner S_{i,j}(t)$, where for the purposes of the whole proof
	we abbreviate $S_{i,j}(t):=\p^*A^{i}(t)\cap\p^*A^{j}(t)$, and recalling second that
	$n_{i,j}(\cdot,t) = \frac{\nabla\chi_j(\cdot,t)}{|\nabla\chi_j(\cdot,t)|}
	= - \frac{\nabla\chi_i(\cdot,t)}{|\nabla\chi_i(\cdot,t)|} = -n_{j,i}(\cdot,t)$ along $\p^*A^{i}(t)\cap\p^*A^{j}(t)$,
	the calibration property~\eqref{eq:calibrationMultiPhase} allows to compute
	\begin{align}
	\label{eq:auxMultiPhase2}
	\sum_{i=1}^P \int_{0}^{T} \int_{\R^d} \frac{\nabla\chi_i}{|\nabla\chi_i|} 
	\cdot \partial_t\xi_{i} \,d|\nabla\chi_i| dt
	= - \frac{1}{2} \sum_{i,j=1,\,i\neq j}^{P} \int_{0}^{T}
	\int_{S_{i,j}(t)} n_{i,j} \cdot \partial_t\xi_{i,j} \,d\H^{d-1} dt.
	\end{align}
	In view of the estimates~\eqref{eq:transportXiMultiPhase}--\eqref{eq:transportLengthXiMultiPhase}
	and the coercivity property~\eqref{eq:coercivity6MultiPhase}, the same argument
	as for~\eqref{eq:stabilityEstimateAux2} entails for all $i,j\in\{1,\ldots,P\}$ with $i\neq j$
	\begin{align}
	\nonumber
	&- \int_{0}^{T} \int_{S_{i,j}(t)} n_{i,j} \cdot \partial_t\xi_{i,j} \,d\H^{d-1} dt
	\\& \label{eq:auxMultiPhase3}
	\leq \int_{0}^{T} \int_{S_{i,j}(t)} \xi_{i,j} \otimes (n_{i,j} {-} \xi_{i,j}) : \nabla B \,d\H^{d-1} dt
	+ \int_{0}^{T} \int_{S_{i,j}(t)} n_{i,j} \cdot (B \cdot \nabla)\xi_{i,j} \,d\H^{d-1} dt
	\\&~~~ \nonumber
	+ C \int_{0}^{T} E[\mu,\chi|\mathscr{A}](t) \,dt.
	\end{align}
	Observe next that the structurally analogous argument enabling~\eqref{eq:auxMultiPhase2} 
	further guarantees
	\begin{align*}
	\frac{1}{2} \sum_{i,j=1,\,i\neq j}^{P} &\int_{0}^{T}
	\int_{S_{i,j}(t)} n_{i,j} \cdot \big(\nabla \cdot (\xi_{i,j} \otimes B)\big) \,d\H^{d-1} dt
	\\ &= - \sum_{i=1}^P \int_{0}^{T} \int_{\R^d} \frac{\nabla\chi_i}{|\nabla\chi_i|} 
	\cdot \big(\nabla \cdot (\xi_i \otimes B)\big) \,d|\nabla\chi_i| dt.
	\end{align*}
	Of course, this identity is also valid with the roles of~$B$ and~$\xi_{i,j}$ 
	as well as $B$ and $\xi_i$ reversed, respectively, and the same arguments moreover show
	\begin{align*}
	\frac{1}{2} \sum_{i,j=1,\,i\neq j}^{P} \int_{0}^{T}
	\int_{S_{i,j}(t)} (n_{i,j} \cdot B) (\nabla\cdot\xi_{i,j}) \,d\H^{d-1} dt
	= - \sum_{i=1}^P \int_{0}^{T} \int_{\R^d} \Big(\frac{\nabla\chi_i}{|\nabla\chi_i|} 
	\cdot B\Big) (\nabla\cdot\xi_i) \,d|\nabla\chi_i| dt.
	\end{align*} 
	Due to the compatibility condition~\eqref{eq:compatibilityMultiPhase}
	and the calibration property~\eqref{eq:calibrationMultiPhase},
	as well as by replacing the anti-symmetry condition $n_{i,j}=-n_{j,i}$ and
	the splitting $|\nabla\chi_i(\cdot,t)| = \sum_{j=1,j\neq i}^{P}\H^{d-1}
	\llcorner S_{i,j}(t)$ by the anti-symmetry condition of~\eqref{eq:compNormals}
	and the decomposition~\eqref{eq:varifoldGrain}, respectively,
	the identity of the previous display may in turn be updated
	by the previous arguments to
	\begin{align}
	\label{eq:auxMultiPhase6}
	&\frac{1}{2} \sum_{i,j=1,\,i\neq j}^{P} \int_{0}^{T}
	\int_{S_{i,j}(t)} (n_{i,j} \cdot B) (\nabla\cdot\xi_{i,j}) \,d\H^{d-1} dt
	\\& \nonumber
	= - \sum_{i=1}^P \int_{0}^{T} \int_{\R^d{\times}\S^{d-1}} (p\cdot B) (\nabla\cdot\xi_i) \,d\mu^{i}_t dt
	= \frac{1}{2} \sum_{i,j=1,\,i\neq j}^{P} \int_{0}^{T}
	\int_{\R^d{\times}\S^{d-1}} (p \cdot B) (\nabla\cdot\xi_{i,j}) \,d\mu^{i,j}_t dt.
	\end{align}
	In particular, the argument in favor of~\eqref{eq:stabilityEstimateAux4} and~\eqref{eq:stabilityEstimateAux5}
	now implies because of the previous three displays, \eqref{eq:relativeEntropy3MultiPhase} 
	and~\eqref{eq:RadonNikodymPropertyMultiPhase}
	\begin{align}
	\nonumber
	&\frac{1}{2} \sum_{i,j=1,\,i\neq j}^{P} \int_{0}^{T}
	\int_{S_{i,j}(t)} n_{i,j} \cdot (B \cdot \nabla)\xi_{i,j} \,d\H^{d-1} dt
	\\& \label{eq:auxMultiPhase4}
	\leq \frac{1}{2} \sum_{i,j=1,\,i\neq j}^{P} \int_{0}^{T}
	\int_{\R^d{\times}\S^{d-1}} (p \cdot B) (\nabla\cdot\xi_{i,j}) \,d\mu^{i,j}_t dt
	- \int_{0}^{T} \int_{\R^d {\times} \S^{d-1}}
	(I_d {-} p \otimes p) : \nabla B \,d\mu_t dt
	\\&~~~ \nonumber
	+  \frac{1}{2} \sum_{i,j=1,\,i\neq j}^{P} \int_{0}^{T} 
	\int_{S_{i,j}(t)} n_{i,j} \otimes \xi_{i,j} : \nabla B \,d\H^{d-1} dt
	- \int_{0}^{T} \int_{\R^d {\times} \S^{d-1}} p \otimes p : \nabla B \,d\mu_t dt.
	\end{align}
	Hence, combining the estimates~\eqref{eq:auxMultiPhase3} and~\eqref{eq:auxMultiPhase4} ensures
	\begin{align}
	\label{eq:auxMultiPhase5}
	&- \frac{1}{2} \sum_{i,j=1,\,i\neq j}^{P} \int_{0}^{T} 
	\int_{S_{i,j}(t)} n_{i,j} \cdot \partial_t\xi_{i,j} \,d\H^{d-1} dt
	\\& \nonumber
	\leq \frac{1}{2} \sum_{i,j=1,\,i\neq j}^{P} \int_{0}^{T}
	\int_{\R^d{\times}\S^{d-1}} (p \cdot B) (\nabla\cdot\xi_{i,j}) \,d\mu^{i,j}_t dt
	- \int_{0}^{T} \int_{\R^d {\times} \S^{d-1}}
	(I_d {-} p \otimes p) : \nabla B \,d\mu_t dt
	\\&~~~ \nonumber
	- \frac{1}{2} \sum_{i,j=1,\,i\neq j}^{P} \int_{0}^{T} 
	\int_{S_{i,j}(t)} \xi_{i,j} \otimes \xi_{i,j} : \nabla B \,d\H^{d-1} dt
	- \int_{0}^{T} \int_{\R^d {\times} \S^{d-1}} p \otimes p : \nabla B \,d\mu_t dt
	\\&~~~ \nonumber
	+  \frac{1}{2} \sum_{i,j=1,\,i\neq j}^{P} \int_{0}^{T} 
	\int_{S_{i,j}(t)} (n_{i,j} \otimes \xi_{i,j} {+} \xi_{i,j} \otimes n_{i,j}) : \nabla B \,d\H^{d-1} dt
	+ C \int_{0}^{T} E[\mu,\chi|\mathscr{A}](t) \,dt.
	\end{align}
	We further post-process this estimate by noting first that the 
	argument of~\eqref{eq:auxMultiPhase6} also yields
	\begin{align}
	\label{eq:auxMultiPhase7}
	&\frac{1}{2} \sum_{i,j=1,\,i\neq j}^{P} \int_{0}^{T} 
	\int_{S_{i,j}(t)} n_{i,j} \otimes \xi_{i,j} : \nabla B \,d\H^{d-1} dt
	= \frac{1}{2} \sum_{i,j=1,\,i\neq j}^{P} \int_{0}^{T} 
	\int_{\R^d{\times}\S^{d-1}} p \otimes \xi_{i,j} : \nabla B \,d\mu^{i,j}_t dt,
	\end{align}
	where we remark that the same identity also holds true with the roles of $n_{i,j}$ and $\xi_{i,j}$
	as well as $p$ and $\xi_{i,j}$ reversed, respectively. For the next step, we claim that
	\begin{align}
	\nonumber
	&- \frac{1}{2} \sum_{i,j=1,\,i\neq j}^{P} \int_{0}^{T} 
	\int_{S_{i,j}(t)} \xi_{i,j} \otimes \xi_{i,j} : \nabla B \,d\H^{d-1} dt
	\\& \label{eq:auxMultiPhase8}
	\leq - \frac{1}{2} \sum_{i,j=1,\,i\neq j}^{P} \int_{0}^{T} 
	\int_{\R^d{\times}\S^{d-1}} \xi_{i,j} \otimes \xi_{i,j} : \nabla B \,d\mu^{i,j}_t dt
	+ C \int_{0}^{T} E[\mu,\chi|\mathscr{A}](t) \,dt.
	\end{align}
	Before we prove~\eqref{eq:auxMultiPhase8}, let us first observe that
	inserting~\eqref{eq:auxMultiPhase7}--\eqref{eq:auxMultiPhase8}
	back into~\eqref{eq:auxMultiPhase5} and then estimating the resulting
	quadratic term in the difference between the calibration normals~$\xi_{i,j}$
	and the varifold normal~$p$ from above by~\eqref{eq:coercivity3MultiPhase},
	we consequently obtain from~\eqref{eq:auxMultiPhase1} and~\eqref{eq:auxMultiPhase2}
	\begin{align}
	\label{eq:auxMultiPhase9}
	E[\mu,\chi|\mathscr{A}](T) &\leq E[\mu_0,\chi_0|\mathscr{A}(0)]
	- \frac{1}{2} \sum_{i=1}^P \int_{0}^{T} \int_{\R^d} \frac{1}{2}|V^i|^2 \,d\omega^i_t dt
	- \frac{1}{2} \int_{0}^{T} \int_{\R^d} |\mathbf{H}|^2 \,d\omega_t dt
	\\&~~~ \nonumber
	+ \sum_{i=1}^P \int_{0}^{T} \int_{\R^d} V^i (\nabla\cdot\xi_i) \,d\omega^i_t dt
	- \int_{0}^{T} \int_{\R^d {\times} \S^{d-1}}
	(I_d {-} p \otimes p) : \nabla B \,d\mu_t dt
	\\&~~~ \nonumber
	+ \frac{1}{2} \sum_{i,j=1,\,i\neq j}^{P} \int_{0}^{T}
	\int_{\R^d{\times}\S^{d-1}} (p \cdot B) (\nabla\cdot\xi_{i,j}) \,d\mu^{i,j}_t dt
	+ C \int_{0}^{T} E[\mu,\chi|\mathscr{A}](t) \,dt.
	\end{align}
	For a proof of~\eqref{eq:auxMultiPhase8}, a purely algebraic manipulation
	and $|\xi_{i,j} \cdot (\xi_{i,j} {-} n_{i,j})| \leq 3( 1 - n_{i,j}\cdot\xi_{i,j})$ gives
	\begin{align}
	\nonumber
	2\,\xi_{i,j} \otimes \xi_{i,j}
	&= \big( (\xi_{i,j} {-} n_{i,j}) \otimes \xi_{i,j}
	+ \xi_{i,j} \otimes (\xi_{i,j} {-} n_{i,j}) \big)
	+ n_{i,j} \otimes \xi_{i,j} + \xi_{i,j} \otimes n_{i,j}
	\\& \label{eq:auxMultiPhase10}
	= \big( (I_d{-}\xi_{i,j}\otimes\xi_{i,j})(\xi_{i,j} {-} n_{i,j}) \otimes \xi_{i,j}
	+ \xi_{i,j} \otimes (I_d{-}\xi_{i,j}\otimes\xi_{i,j})(\xi_{i,j} {-} n_{i,j}) \big)
	\\&~~~ \nonumber
	+ n_{i,j} \otimes \xi_{i,j} + \xi_{i,j} \otimes n_{i,j}
	+ O(1 - n_{i,j}\cdot\xi_{i,j}).
	\end{align}
	Hence, plugging in~\eqref{eq:auxMultiPhase7} and
	estimating based on~\eqref{eq:symmetryGradientB}, \eqref{eq:relativeEntropy3MultiPhase},
	\eqref{eq:coercivity5MultiPhase} and~\eqref{eq:coercivity6MultiPhase} we get
	\begin{align*}
	&- \frac{1}{2} \sum_{i,j=1,\,i\neq j}^{P} \int_{0}^{T} 
	\int_{S_{i,j}(t)} \xi_{i,j} \otimes \xi_{i,j} : \nabla B \,d\H^{d-1} dt
	\\& 
	\leq - \frac{1}{2} \sum_{i,j=1,\,i\neq j}^{P} \int_{0}^{T} 
	\int_{\R^d{\times}\S^{d-1}} (p \otimes \xi_{i,j} + \xi_{i,j} \otimes p) : \nabla B \,d\mu^{i,j}_t dt
	+ C \int_{0}^{T} E[\mu,\chi|\mathscr{A}](t) \,dt.
	\end{align*}
	Reading~\eqref{eq:auxMultiPhase10} with~$n_{i,j}$ replaced by~$p$
	and estimating the resulting error terms this time 
	by means of~\eqref{eq:symmetryGradientB}, \eqref{eq:relativeEntropy2MultiPhase},
	\eqref{eq:coercivity3MultiPhase} and~\eqref{eq:coercivity4MultiPhase},
	we deduce that the previous display implies the asserted estimate~\eqref{eq:auxMultiPhase8}.
	In particular, we may continue with~\eqref{eq:auxMultiPhase9}.
	
	To this end, relying on the conditions of~\eqref{eq:compNormalSpeeds1} and~\eqref{eq:compNormalSpeeds2}
	and yet again the calibration property~\eqref{eq:calibrationMultiPhase}, we have
	\begin{align*}
	\sum_{i=1}^P \int_{0}^{T} \int_{\R^d} V^i (\nabla\cdot\xi_i) \,d\omega^i_t dt
	= \frac{1}{2} \sum_{i,j=1,\,i\neq j}^{P} \int_{0}^{T}
	\int_{\R^d} V^i (\nabla\cdot\xi_{i,j}) \,d\omega^{i,j}_t dt.
	\end{align*}
	We then insert the decomposition~\eqref{eq:varifoldGrain},
	complete squares twice for off-diagonal terms, and estimate the diagonal terms with the
	help of Young's inequality and the coercivity property~\eqref{eq:coercivity1MultiPhase}
	to obtain together with the previous display and finally~\eqref{eq:motionByMeanCurvatureMultiPhase}
	as well as~\eqref{eq:coercivity4MultiPhase} that
	\begin{align}
	\nonumber
	&- \frac{1}{2} \sum_{i=1}^P \int_{0}^{T} \int_{\R^d} \frac{1}{2}|V^i|^2 \,d\omega^i_t dt
	+ \sum_{i=1}^P \int_{0}^{T} \int_{\R^d} V^i (\nabla\cdot\xi_i) \,d\omega^i_t dt
	\\& \nonumber
	+ \frac{1}{2} \sum_{i,j=1,\,i\neq j}^{P} \int_{0}^{T}
	\int_{\R^d{\times}\S^{d-1}} (p \cdot B) (\nabla\cdot\xi_{i,j}) \,d\mu^{i,j}_t dt
	\\& \nonumber
	\leq - \frac{1}{4} \sum_{i=1}^P \int_{0}^{T} \int_{\R^d} \frac{1}{2}|V^i|^2 \,d\omega^{i,i}_t dt
	- \frac{1}{2} \sum_{i,j=1,\,i\neq j}^{P} \int_{0}^{T}
	\int_{\R^d} \frac{1}{2} |V^{i} {-} (\nabla\cdot\xi_{i,j})|^2 \,d\omega^{i,j}_t dt
	\\&~~~ \nonumber
	+ \frac{1}{2} \sum_{i,j=1,\,i\neq j}^{P} \int_{0}^{T}
	\int_{\R^d{\times}\S^{d-1}} (p \cdot B) (\nabla\cdot\xi_{i,j}) \,d\mu^{i,j}_t dt
	\\&~~~ \nonumber
	+ \frac{1}{2} \sum_{i,j=1,\,i\neq j}^{P} \int_{0}^{T}
	\int_{\R^d} \frac{1}{2} (\nabla\cdot\xi_{i,j})^2 \,d\omega^{i,j}_t dt
	+ C \int_{0}^{T} E[\mu,\chi|\mathscr{A}](t) \,dt
	\\& \nonumber
	\leq - \frac{1}{4} \sum_{i=1}^P \int_{0}^{T} \int_{\R^d} \frac{1}{2}|V^i|^2 \,d\omega^{i,i}_t dt
	- \frac{1}{2} \sum_{i,j=1,\,i\neq j}^{P} \int_{0}^{T}
	\int_{\R^d} \frac{1}{2} |V^{i} {-} (\nabla\cdot\xi_{i,j})|^2 \,d\omega^{i,j}_t dt
	\\&~~~ \nonumber
	+ \frac{1}{2} \sum_{i,j=1,\,i\neq j}^{P} \int_{0}^{T}
	\int_{\R^d} \frac{1}{2} |(B\cdot\xi_{i,j}) {+} (\nabla\cdot\xi_{i,j})|^2 \,d\omega^{i,j}_t dt
	- \frac{1}{2} \sum_{i,j=1,\,i\neq j}^{P} \int_{0}^{T}
	\int_{\R^d} \frac{1}{2} (B\cdot\xi_{i,j})^2 \,d\omega^{i,j}_t dt
	\\&~~~ \nonumber
	+ \frac{1}{2} \sum_{i,j=1,\,i\neq j}^{P} \int_{0}^{T}
	\int_{\R^d} B\cdot(p {-} \xi_{i,j})(\nabla\cdot\xi_{i,j}) \,d\omega^{i,j}_t dt
	+ C \int_{0}^{T} E[\mu,\chi|\mathscr{A}](t) \,dt
	\\& \label{eq:auxMultiPhase11}
	\leq - \frac{1}{4} \sum_{i=1}^P \int_{0}^{T} \int_{\R^d} \frac{1}{2}|V^i|^2 \,d\omega^{i,i}_t dt
	- \frac{1}{2} \sum_{i,j=1,\,i\neq j}^{P} \int_{0}^{T}
	\int_{\R^d} \frac{1}{2} |V^{i} {-} (\nabla\cdot\xi_{i,j})|^2 \,d\omega^{i,j}_t dt
	\\&~~~ \nonumber
	+ \frac{1}{2} \sum_{i,j=1,\,i\neq j}^{P} \int_{0}^{T}
	\int_{\R^d} B\cdot(p {-} \xi_{i,j})(\nabla\cdot\xi_{i,j}) \,d\omega^{i,j}_t dt
	- \frac{1}{2} \sum_{i,j=1,\,i\neq j}^{P} \int_{0}^{T}
	\int_{\R^d} \frac{1}{2} (B\cdot\xi_{i,j})^2 \,d\omega^{i,j}_t dt
	\\&~~~ \nonumber
	+ C \int_{0}^{T} E[\mu,\chi|\mathscr{A}](t) \,dt.
	\end{align}
	Moreover, by~\eqref{eq:varifoldGrain} and \eqref{eq:varifoldNetwork}
	as well as~\eqref{eq:weakCurvatureDirection}
	in form of the identity $|\mathbf{H}(x,t)|^2|\langle\lambda^{i,j}_{x,t}\rangle|^2 
	= |\mathbf{H}(x,t)\cdot\langle\lambda^{i,j}_{x,t}\rangle|^2$,
	we rewrite one of the helpful dissipation-terms as
	\begin{align}
	\nonumber
	&- \frac{1}{2} \int_{0}^{T} \int_{\R^d} |\mathbf{H}|^2 \,d\omega_t dt
	\\& \label{eq:auxMultiPhase12}
	= - \frac{1}{2} \sum_{i=1}^P \int_{0}^{T} \int_{\R^d} |\mathbf{H}|^2 \,d\omega^{i,i}_t dt
	- \frac{1}{2} \sum_{i,j=1,\,i\neq j}^{P} \int_{0}^{T}
	\int_{\R^d} \frac{1}{2} |\mathbf{H}|^2(1{-}|\langle\lambda^{i,j}_{x,t}\rangle|^2) \,d\omega^{i,j}_t dt
	\\&~~~ \nonumber
	- \frac{1}{2} \sum_{i,j=1,\,i\neq j}^{P} \int_{0}^{T}
	\int_{\R^d} \frac{1}{2} |\mathbf{H}(x,t)\cdot\langle\lambda^{i,j}_{x,t}\rangle|^2 \,d\omega^{i,j}_t dt.
	\end{align}
	Since we may exploit~\eqref{eq:weakCurvatureMultiPhase}
	and then decompose in a similar fashion using~\eqref{eq:weakCurvatureDirection} 
	this time directly, we also get
	\begin{align}
	\nonumber
	&- \int_{0}^{T} \int_{\R^d {\times} \S^{d-1}}
	(I_d {-} p \otimes p) : \nabla B \,d\mu_t dt
	\\& \label{eq:auxMultiPhase13}
	= \sum_{i=1}^P \int_{0}^{T} \int_{\R^d} \mathbf{H} \cdot B \,d\omega^{i,i}_t dt
	+ \frac{1}{2} \sum_{i,j=1,\,i\neq j}^{P} \int_{0}^{T}
	\int_{\R^d} (1{-}|\langle\lambda^{i,j}_{x,t}\rangle|^2)\mathbf{H} \cdot B \,d\omega^{i,j}_t dt
	\\&~~~ \nonumber
	+ \frac{1}{2} \sum_{i,j=1,\,i\neq j}^{P} \int_{0}^{T}
	\int_{\R^d} (\mathbf{H} \cdot \langle\lambda^{i,j}_{x,t}\rangle) B \cdot p \,d\mu^{i,j}_t dt
	\end{align}
	Thanks to the control provided by~\eqref{eq:coercivity1MultiPhase} and~\eqref{eq:coercivity2MultiPhase},
	the bounds $1{-}|\langle\lambda^{i,j}_{x,t}\rangle| \leq 1{-}|\langle\lambda^{i,j}_{x,t}\rangle|^2
	\leq 2(1{-}|\langle\lambda^{i,j}_{x,t}\rangle|)$, and an absorption argument, adding \eqref{eq:auxMultiPhase12}
	to~\eqref{eq:auxMultiPhase13} thus results in the estimate
	\begin{align*}
	&- \frac{1}{2} \int_{0}^{T} \int_{\R^d} |\mathbf{H}|^2 \,d\omega_t dt
	- \int_{0}^{T} \int_{\R^d {\times} \S^{d-1}}
	(I_d {-} p \otimes p) : \nabla B \,d\mu_t dt
	\\&
	\leq - \frac{1}{4} \sum_{i=1}^P \int_{0}^{T} \int_{\R^d} |\mathbf{H}|^2 \,d\omega^{i,i}_t dt
	- \frac{1}{4} \sum_{i,j=1,\,i\neq j}^{P} \int_{0}^{T}
	\int_{\R^d} \frac{1}{2} |\mathbf{H}|^2(1{-}|\langle\lambda^{i,j}_{x,t}\rangle|^2) \,d\omega^{i,j}_t dt
	\\&~~~
	- \frac{1}{2} \sum_{i,j=1,\,i\neq j}^{P} \int_{0}^{T}
	\int_{\R^d} \frac{1}{2} |\mathbf{H}(x,t)\cdot\langle\lambda^{i,j}_{x,t}\rangle|^2 \,d\omega^{i,j}_t dt
	+ \frac{1}{2} \sum_{i,j=1,\,i\neq j}^{P} \int_{0}^{T}
	\int_{\R^d} (\mathbf{H} \cdot \langle\lambda^{i,j}_{x,t}\rangle) B \cdot p \,d\mu^{i,j}_t dt
	\\&~~~ \nonumber
	+ C \int_{0}^{T} E[\mu,\chi|\mathscr{A}](t) \,dt.
	\end{align*}
	Hence, as a consequence of the previous display, completing the square,
	adding zero, recalling the control provided by~\eqref{eq:coercivity3MultiPhase},
	and finally performing yet another absorption argument, it holds
	\begin{align}
	\nonumber
	&- \frac{1}{2} \int_{0}^{T} \int_{\R^d} |\mathbf{H}|^2 \,d\omega_t dt
	- \int_{0}^{T} \int_{\R^d {\times} \S^{d-1}}
	(I_d {-} p \otimes p) : \nabla B \,d\mu_t dt
	\\& \nonumber
	\leq - \frac{1}{2} \sum_{i,j=1,\,i\neq j}^{P} \int_{0}^{T}
	\int_{\R^d} \frac{1}{2} \big|\big(\mathbf{H}(x,t)\cdot\langle\lambda^{i,j}_{x,t}\rangle\big)
	{-} (B\cdot\xi_{i,j})\big|^2 \,d\omega^{i,j}_t dt
	\\&~~~ \nonumber
	+ \frac{1}{2} \sum_{i,j=1,\,i\neq j}^{P} \int_{0}^{T}
	\int_{\R^d} \frac{1}{2} (B\cdot\xi_{i,j})^2 \,d\omega^{i,j}_t dt
	\\&~~~ \nonumber
	- \frac{1}{2} \sum_{i,j=1,\,i\neq j}^{P} \int_{0}^{T}
	\int_{\R^d} \big(\big(\mathbf{H} \cdot \langle\lambda^{i,j}_{x,t}\rangle\big) 
	{-} (B\cdot\xi_{i,j})\big) B \cdot (\xi_{i,j} {-} p) \,d\mu^{i,j}_t dt
	\\&~~~ \nonumber
	- \frac{1}{2} \sum_{i,j=1,\,i\neq j}^{P} \int_{0}^{T}
	\int_{\R^d} (B\cdot\xi_{i,j}) B \cdot (\xi_{i,j} {-} p) \,d\mu^{i,j}_t dt
	+ C \int_{0}^{T} E[\mu,\chi|\mathscr{A}](t) \,dt
	\\& \label{eq:auxMultiPhase14}
	\leq  \frac{1}{2} \sum_{i,j=1,\,i\neq j}^{P} \int_{0}^{T}
	\int_{\R^d} \frac{1}{2} (B\cdot\xi_{i,j})^2 \,d\omega^{i,j}_t dt
	\\&~~~ \nonumber
	- \frac{1}{2} \sum_{i,j=1,\,i\neq j}^{P} \int_{0}^{T}
	\int_{\R^d} (B\cdot\xi_{i,j}) B \cdot (\xi_{i,j} {-} p) \,d\mu^{i,j}_t dt
	+ C \int_{0}^{T} E[\mu,\chi|\mathscr{A}](t) \,dt.
	\end{align}
	Adding~\eqref{eq:auxMultiPhase11} to~\eqref{eq:auxMultiPhase14},
	observing that as a result of~\eqref{eq:motionByMeanCurvatureMultiPhase},
	\eqref{eq:coercivity3MultiPhase} and~\eqref{eq:coercivity4MultiPhase}
	\begin{align*}
	- \frac{1}{2} \sum_{i,j=1,\,i\neq j}^{P} \int_{0}^{T}
	\int_{\R^d} \big((B\cdot\xi_{i,j}) {+} (\nabla\cdot\xi_{i,j})\big) B \cdot (\xi_{i,j} {-} p) \,d\mu^{i,j}_t dt
	\leq C \int_{0}^{T} E[\mu,\chi|\mathscr{A}](t)  dt,
	\end{align*}
	we may thus upgrade the preliminary estimate~\eqref{eq:auxMultiPhase9} to
	\begin{align}
	\label{eq:auxMultiPhase15}
	E[\mu,\chi|\mathscr{A}](T) &\leq E[\mu_0,\chi_0|\mathscr{A}(0)]
	- \frac{1}{2} \sum_{i,j=1,\,i\neq j}^{P} \int_{0}^{T}
	\int_{\R^d} \frac{1}{2} |V^{i} {-} (\nabla\cdot\xi_{i,j})|^2 \,d\omega^{i,j}_t dt
	\\&~~~ \nonumber
	- \frac{1}{4} \sum_{i=1}^P \int_{0}^{T} \int_{\R^d} \frac{1}{2}|V^i|^2 \,d\omega^{i,i}_t dt
	+ C \int_{0}^{T} E[\mu,\chi|\mathscr{A}](t) \,dt.
	\end{align}
	This, however, is already a stronger form of what is claimed by~\eqref{eq:stabilityRelEntropyMultiPhase}.
	
	\textit{Step 2: Proof of~\emph{\eqref{eq:stabilityBulkErrorMultiPhase}}.}
	Analogous to~\eqref{eq:stabilityEstimateAux13} and~\eqref{eq:stabilityEstimateAux14},
	relying this time on~\eqref{eq:evolWeightMultiPhase}, the lower bound from~\eqref{eq:coercivity0MultiPhase},
	the coercivity property~\eqref{eq:coercivity8MultiPhase}, and the
	compatibility condition~\eqref{eq:compatibilityMultiPhase}, we obtain
	\begin{align*}
	E_{\mathrm{bulk}}[\chi|\mathscr{A}](T)
	&\leq E_{\mathrm{bulk}}[\chi_0|\mathscr{A}(0)]	
	- \sum_{i=1}^{P} \int_0^T \int_{\R^d} V^{i}\vartheta^{i} \,d\omega^{i}_t dt
	\\&~~~
	+  \sum_{i=1}^{P} \int_{0}^{T} \int_{\R^d {\times} \S^{d-1}} 
	(p \cdot B) \vartheta^{i} \,d\mu^{i}_t dt
	+ C \int_0^T E_{\mathrm{bulk}}[\chi|\mathscr{A}](t) \,dt.
	\end{align*}
	Due to adding zero, the upper bound from~\eqref{eq:coercivity0MultiPhase},
	the decomposition~\eqref{eq:varifoldGrain},
	and the coercivity properties~\eqref{eq:coercivity1MultiPhase}, 
	\eqref{eq:coercivity3MultiPhase} and~\eqref{eq:coercivity4MultiPhase},
	it further holds
	\begin{align*}
	&\sum_{i=1}^{P} \int_{0}^{T} \int_{\R^d {\times} \S^{d-1}} 
	(p \cdot B) \vartheta^{i} \,d\mu^{i}_t dt
	\\&
	\leq - \sum_{i,j=1,\,i\neq j}^{P} \int_{0}^{T} \int_{\R^d {\times} \S^{d-1}} 
	(p \cdot B) \vartheta^{i} \,d\mu^{i,j}_t dt
	+ C \int_{0}^{T} E[\mu,\chi|\mathscr{A}](t) \,dt
	\\&
	\leq - \sum_{i,j=1,\,i\neq j}^{P} \int_{0}^{T} \int_{\R^d {\times} \S^{d-1}} 
	(\xi_{i,j} \cdot B) \vartheta^{i} \,d\omega^{i,j}_t dt
	+ C \int_{0}^{T} E[\mu,\chi|\mathscr{A}](t) \,dt
	\\&
	\leq \sum_{i,j=1,\,i\neq j}^{P} \int_{0}^{T} \int_{\R^d {\times} \S^{d-1}} 
	(\nabla\cdot\xi_{i,j}) \vartheta^{i} \,d\omega^{i,j}_t dt
	+ C \int_{0}^{T} E[\mu,\chi|\mathscr{A}](t) \,dt
	\end{align*}
	where in the last step, we also exploited~\eqref{eq:motionByMeanCurvatureMultiPhase}.
	Moreover, inserting first the decomposition~\eqref{eq:varifoldGrain}, rewriting
	off-diagonal terms based on the first condition of~\eqref{eq:compNormalSpeeds1} and~\eqref{eq:compNormalSpeeds2},
	and finally estimating diagonal terms by Young's inequality and \eqref{eq:coercivity1MultiPhase},
	we have
	\begin{align*}
	- \sum_{i=1}^{P} \int_0^T \int_{\R^d} V^{i}\vartheta^{i} \,d\omega^{i}_t dt
	&\leq \frac{1}{4} \sum_{i=1}^P \int_{0}^{T} \int_{\R^d} \frac{1}{2}|V^i|^2 \,d\omega^{i,i}_t dt
	-\sum_{i,j=1,\,i\neq j}^{P} \int_0^T \int_{\R^d} V^{i}\vartheta^{i} \,d\omega^{i,j}_t dt
	\\&~~~
	+ C \int_{0}^{T} E[\mu,\chi|\mathscr{A}](t) \,dt.
	\end{align*}
	Adding the previous two displays and then estimating by Young's inequality, 
	the upper bound from~\eqref{eq:coercivity0MultiPhase} as well as the 
	control provided by~\eqref{eq:coercivity4MultiPhase}, we infer
	\begin{align*}
	E_{\mathrm{bulk}}[\chi|\mathscr{A}](T)
	&\leq E_{\mathrm{bulk}}[\chi_0|\mathscr{A}(0)]	
	+ \frac{1}{4} \sum_{i=1}^P \int_{0}^{T} \int_{\R^d} \frac{1}{2}|V^i|^2 \,d\omega^{i,i}_t dt
	\\&~~~
	+ \frac{1}{2} \sum_{i,j=1,\,i\neq j}^{P} \int_{0}^{T}
	\int_{\R^d} \frac{1}{2} |V^{i} {-} (\nabla\cdot\xi_{i,j})|^2 \,d\omega^{i,j}_t dt
	\\&~~~
	+ C \int_0^T E_{\mathrm{bulk}}[\chi|\mathscr{A}](t) + E[\mu,\chi|\mathscr{A}](t) \,dt,
	\end{align*}
	so that adding the previous display to~\eqref{eq:auxMultiPhase15} finally
	yields~\eqref{eq:stabilityBulkErrorMultiPhase} as desired.
	
	\textit{Step 3: Proof of~\emph{\eqref{eq:weakStrong1MultiPhase}--\eqref{eq:weakStrong2MultiPhase}}.}
	This follows from straightforward arguments based on the coercivity properties
	of the relative entropy and the bulk error.
\end{proof}

\section*{Acknowledgments}
This project has received funding from the European Research Council 
(ERC) under the European Union's Horizon 2020 research and innovation 
programme (grant agreement No 948819)
\begin{tabular}{@{}c@{}}\includegraphics[width=8ex]{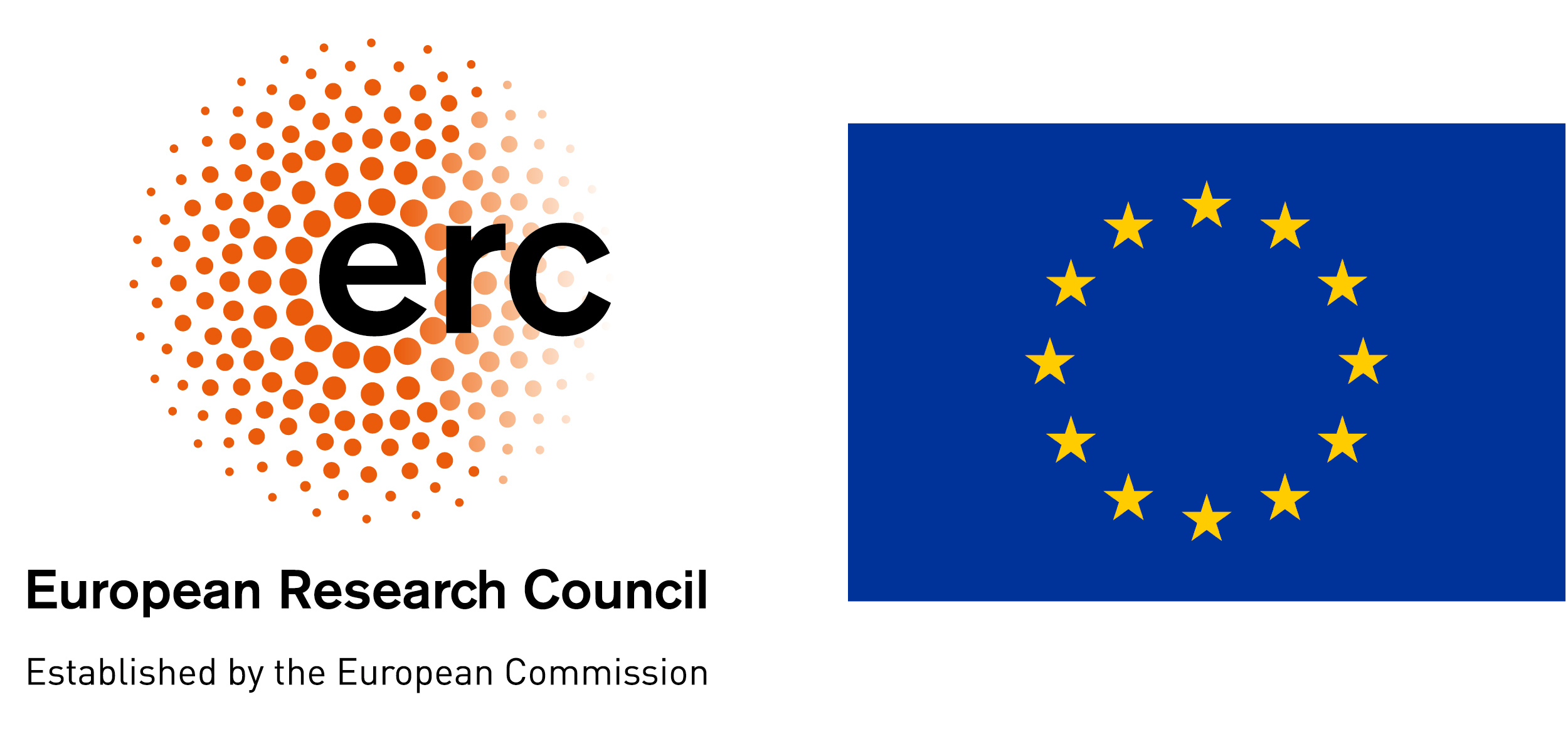}\end{tabular}, and 
from the Deutsche Forschungsgemeinschaft (DFG, German Research Foundation) under Germany's Excellence Strategy -- EXC-2047/1 -- 390685813.
The content of this paper was developed and parts of it were written 
during a visit of the first author to the Hausdorff Center of Mathematics (HCM),
University of Bonn. The hospitality and the support of HCM are gratefully acknowledged. 

\frenchspacing
\bibliographystyle{abbrv}
\bibliography{deGiorgiVarifoldSolutions}
	
  \end{document}